\documentclass[11pt]{article}
% Document type and general settings:
%   \documentclass[11pt]{article}
%  \documentclass[11pt, fleqn]{article}  % fleqn = flush equations to the left (default is center)

% Page layout:
  \usepackage[letterpaper, left=1in, right=1in, top=1in, bottom=1in]{geometry}
  
% Title and author information, keywords:
  \usepackage[affil-it]{authblk}  % for associating author affiliations
  \providecommand{\keywords}[1]{\textbf{Keywords: } #1}
 
% Show Keywords
\def\KEYWORDSname{{\it Key words\/}{\kern0.7pt}:\enskip}
%

% Header and footer:
%	\usepackage{fancyhdr}  % for setting own page layout
%	\pagestyle{fancy}  % fancy page style
%  \renewcommand{\headrulewidth}{0.5pt}  % line below the header
%	\renewcommand{\footrulewidth}{0.5pt}  % line above the footer
%	\fancyhead{}	  % clear header
%	\fancyfoot{} 	% clear footer
%	\fancyfoot[C]{\vspace{2mm} \thepage}  % page number in the footer  
  
  \usepackage{enumerate}
% Font and line spacing:
  \usepackage{palatino}  % font face
  \sloppy  % proper hyphenation at the end of a line
  \usepackage{setspace}  % line spacing
  \setstretch{1.1}  % custom line spacing
  \usepackage{color}  % font color
  \usepackage[dvipsnames]{xcolor}  % adds more color options

% Captions:
	\usepackage[labelfont=bf]{caption}	% caption style

% Math symbols and environments:
  \usepackage{amsmath}  % essential math package
  \usepackage{amssymb}  % math symbols
  \usepackage{amsthm}  % creating proof and theorem environments
  \usepackage{bm}  % boldface letters
  \usepackage{bbm}  % supports \mathbb-style digits, e.g. \mathbbm{1} indicating an all-one vector
  \allowdisplaybreaks[4]  % breaking multiline equations
	\usepackage{relsize}  % can increase the size of math symbols
	\usepackage{algorithm}  % floating environment for algorithmic
	\usepackage{algpseudocode}  % for easy writing of pseudo code

% Shortcuts for math operators: 
  \DeclareMathOperator*{\minimize}{minimize}
  \DeclareMathOperator*{\maximize}{maximize}
  \DeclareMathOperator*{\argmin}{arg\,min}
  \newcommand{\st}{\mathrm{subject\;to}}

% Definition/theorem environments:
	\theoremstyle{plain}
	\newtheorem{theorem}{Theorem}
	\newtheorem{proposition}{Proposition}
	\newtheorem{lemma}{Lemma}
	\newtheorem{corollary}{Corollary}
	\theoremstyle{definition}
	
	\newtheorem{example}{Example}
	
	\newtheorem{remark}{Remark}
	\newtheorem{condition}{Condition}

    \newenvironment{manuallemma}[1]{%
        \manuallemmainner
    }{\endmanuallemmainner}

    \newenvironment{manualcorollary}[1]{%
        \manualcorollaryinner
    }{\endmanualcorollaryinner}

    \newenvironment{manualproposition}[1]{%
        \manualpropositioninner
    }{\endmanualpropositioninner}

% Useful dingbats:
  \usepackage{pifont}  % dingbats and symbols

% Tables:
	\usepackage{longtable}  % for tables that span more than one page
	\usepackage{multirow}  % for columns spanning multiple rows
	\usepackage{array}  % for column specifications
	\usepackage{booktabs}  % providing nicer tables, using \toprule, \midrule, etc.
	\usepackage{lscape} % To output the table in landscape
%	\setlength{\aboverulesep}{3pt}  % space above each horizontal line
%	\setlength{\belowrulesep}{4pt}  % space below each horizontal line
%	\setlength\tabcolsep{2pt}  % space between columns
	  % standard space between rows
%	\newcommand{\lbcell}[2][c]{\begin{tabular}[c]{@{}#1@{}}#2\end{tabular}}  % table cell with line break

% Graphics:
  \usepackage{graphicx}  % enhanced support for graphics, allows e.g. \in­clude­graph­ics
  \usepackage{float}  % better positioning of figures, e.g. by using [h!]
  \usepackage{subfig}  % creating subfigures with \subfloat
  \setlength\fboxsep{0pt}  % border padding
  \setlength\fboxrule{0pt}  % border thickness  
%  	\graphicspath{{Figures/}}  % path for graphic files
\usepackage{pgf,tikz,pgfplots}
\pgfplotsset{compat=1.15}
\usepackage{mathrsfs}
\usepackage{epsfig}
\usepackage{epstopdf}

% Hyperlinks:
	\usepackage{xr-hyper}
	\usepackage{hyperref}
	\hypersetup{
%		bookmarks = true,  % show bookmarks bar?
		pdfstartview = {FitV},  % fits the width of the page to the window
		colorlinks = true,    % false: boxed links; true: colored links
		linkcolor = NavyBlue,  % color of internal links
		citecolor = NavyBlue,  % color of links to bibliography
		filecolor = NavyBlue,  % color of file links
		urlcolor = NavyBlue  % color of external links
	}  

% Bibliography:
  \usepackage[round]{natbib}  % for natbib, use bibtex as backend

%  \usepackage[style=numeric-comp, sorting=none, maxbibnames=99, firstinits=true, doi=false, isbn=false, url=false, eprint=false]{biblatex}  % for biblatex, use biber as backend
%  \addbibresource{../../../../../References/BibTeX/library.bib}

% New environment to have alphabetic tag for subequations environment
\makeatletter
\newenvironment{varsubequations}[1]
 {%
  \addtocounter{equation}{-1}%
  \begin{subequations}
  \def\@currentlabel{#1}%
 }
 {%
  \end{subequations}\ignorespacesafterend
 }
\makeatother

\title{Decomposition and Adaptive Sampling for \\ Data-Driven Inverse Linear Optimization}
\author[1]{Rishabh Gupta}
\author[1]{Qi Zhang \thanks{Corresponding author (qizh@umn.edu)}}
\affil[1]{Department of Chemical Engineering and Materials Science, University of Minnesota, Minneapolis, MN 55455, USA}
\date{}

\begin{document}

\maketitle

\begin{abstract}
\noindent This work addresses inverse linear optimization where the goal is to infer the unknown cost vector of a linear program. Specifically, we consider the data-driven setting in which the available data are noisy observations of optimal solutions that correspond to different instances of the linear program. We introduce a new formulation of the problem that, compared to other existing methods, allows the recovery of a less restrictive and generally more appropriate admissible set of cost estimates. It can be shown that this inverse optimization problem yields a finite number of solutions, and we develop an exact two-phase algorithm to determine all such solutions. Moreover, we propose an efficient decomposition algorithm to solve large instances of the problem. The algorithm extends naturally to an online learning environment where it can be used to provide quick updates of the cost estimate as new data becomes available over time. For the online setting, we further develop an effective adaptive sampling strategy that guides the selection of the next samples. The efficacy of the proposed methods is demonstrated in computational experiments involving two applications, customer preference learning and cost estimation for production planning. The results show significant reductions in computation and sampling efforts.
\end{abstract}

\keywords{inverse optimization, online learning, adaptive sampling}

\section{Introduction}
Inverse optimization is an emerging new paradigm for uncovering hidden decision-making mechanisms from observed decision data. Following the principle of optimality \citep{Schoemaker1991} that is commonly applied in various fields including economics, psychology, and evolutionary biology, the key idea in inverse optimization is to model a decision-making process as an optimization problem. Decisions can then be viewed as optimal or near-optimal solutions of an optimization model, and the inverse optimization problem (IOP) is to infer this model, if otherwise unknown, from observations. A major advantage of this approach is its ability to explicitly include constraints. This eases the incorporation of domain knowledge, which is often readily available in the form of constraints, significantly. As a result, compared to common black-box machine learning methods, inverse optimization offers the promise of models with enhanced prediction accuracy and interpretability.

The notion of inverse optimization was first introduced by \cite{Burton1992} who considered the problem of determining travel costs on a network as perceived by the users given the routes they have taken. This has inspired research on several inverse network optimization problems \citep{Yang1997, Zhang1998, Zhang1999, Liu2006}. Since then, inverse optimization has found application in a myriad of fields, such as radiation therapy planning \citep{Chan2014, Babier2018}, investment portfolio optimization \citep{Bertsimas2012}, electricity demand forecasting \citep{Gallego2018}, auction mechanism design \citep{Beil2003, Birge2017}, biological systems \citep{Burgard2003, Arechavaleta2008, Terekhov2010}, and optimal control \citep{Hempel2015, Westermann2020}.

Early works in inverse optimization focus on determining an objective function that makes the observed decisions, given the constraints of the problem, exactly optimal. In their seminal paper, \cite{Ahuja2001} present a generalized solution method for inverse optimization with linear forward optimization problems (FOPs). Some of the later works extend the theory to consider conic \citep{Iyengar2005, Zhang2010}, discrete \citep{Schaefer2009, Wang2009, Bulut2015}, and nonlinear \citep{Chow2014} FOPs. 

More recently, the research focus has shifted towards \textit{data-driven} inverse optimization in which we observe an agent's decisions in multiple instances, which can be viewed as instances of the same optimization problem that differ in their input parameter values \citep{Esfahani2018}. With data-driven inverse optimization, one has a much greater chance of learning an optimization model that has true predictive power with respect to future decisions in unseen instances. Here, the observations are generally considered to be \textit{noisy} with the following being the three key sources of the noise: (i) measurement errors, (ii) bounded rationality of the decision maker, and (iii) model specification mismatch \citep{Aswani2018, Esfahani2018}. The existing literature for this setting is limited to the case of convex FOPs. The main distinction among the various proposed formulations is in terms of the loss function employed to fit the data. Minimization of the slack required to make the noisy data satisfy an optimality condition is considered by \cite{Boyd2011}, \cite{Bertsimas2015}, and \cite{Esfahani2018}. However, \cite{Aswani2018} show that this kind of loss function can lead to statistically inconsistent estimates and propose to minimize the sum of some norm of residuals with respect to the decision variables. In the data-driven context, most existing works assume that the observations are available in a single batch, while more recent contributions also address online learning environments in which the observations are made sequentially \citep{Barmann2017,Dong2018,Shahmoradi2019}.

In this work, we consider data-driven inverse \textit{linear} optimization with noisy observations in both batch and online learning settings. Here, the goal is to estimate the unknown cost vector of a linear program (LP). Inverse linear optimization constitutes an important class of IOPs as many decision-making problems can be formulated or approximated as LPs. Although inverse linear optimization falls into the broader category of inverse convex optimization for which established solution methods exist, these more general methods often yield overly restricted sets of admissible cost estimates when applied to inverse linear optimization with noisy data (as discussed in detail in Section \ref{sec:Challenges}). This limitation is to a great extent shared by tailored approaches specifically designed to solve the IOP in the linear case more efficiently \citep{Babier2018, Chan2019}. We note that the majority of the inverse linear optimization literature does not consider the data-driven case but focuses on the single-instance setting (with possibly multiple noisy observations).

Our proposed framework is designed to recover the complete set of admissible solutions for the inverse linear optimization problem, while incorporating the notion of a reference cost vector that represents the user's prior belief, which facilitates the selection of an appropriate point estimate. Based on a polyhedral understanding of the problem, we propose a two-phase approach that separates the tasks of denoising the data and parameter estimation. To solve large instances of the IOP, we develop an exact decomposition algorithm, which processes the data sequentially and can hence also serve as an efficient update method in online inverse optimization. For the online setting, we further develop an adaptive sampling strategy that guides the selection of the next samples in an effort to reduce the amount of required data. While adaptive sampling is quite a mainstream idea in machine learning \citep{Domingo2002, Chang2005, Cozad2014}, to the best of our knowledge, it has not yet been considered in inverse optimization. We believe that the development of such a framework can go a long way in increasing the acceptance of inverse optimization as an alternative to black-box modeling methods, especially in situations where data acquisition is expensive or time-intensive.

The main contributions of this work are as follows:
\begin{enumerate}
    \item We introduce a new general formulation of the data-driven inverse linear optimization problem that considers multiple noisy observations collected for multiple experiments, which are problem instances with different input parameter values. We highlight several geometrical properties of the problem, and show that by assuming that optimal solutions lie at the vertices of the feasible region, we can recover the complete set of admissible cost estimates.
    \item We show that the proposed IOP formulation yields a finite number of solutions. We introduce a two-phase algorithm that can recover all such solutions. Furthermore, we show that under a very mild condition, the IOP is guaranteed to have a unique solution.
    \item We develop an efficient sequential decomposition algorithm to solve large instances of the IOP. The algorithm directly extends itself to online inverse optimization where it can be used to provide quick updates of the cost estimate as new data becomes available.
    \item We propose an effective adaptive sampling strategy that guides the choice of the experiments in online inverse optimization. The adaptive sampling problem is formulated as a mixed-integer nonlinear program (MINLP) for which we provide an efficient heuristic solution algorithm.
    \item We demonstrate the effectiveness of the proposed framework through a comprehensive set of computational experiments, addressing the problems of customer preference learning and cost estimation for production planning. The results indicate that generally, reasonable prediction accuracies can be achieved with relatively small numbers of experiments. Also, one can observe significant reductions in solution time and required number of samples due to the proposed decomposition and adaptive sampling methods, respectively.
\end{enumerate}

The remainder of this paper is organized as follows. In Section \ref{sec:Challenges}, we present a formal description of the inverse linear optimization problem. In Section \ref{sec:NewApproach}, we propose a new formulation that utilizes a reference cost vector to find reasonable cost estimates, discuss its properties, and develop a two-phase solution algorithm. Section \ref{sec:DecomAlg} introduces an exact decomposition algorithm that allows the efficient solution of large instances of the IOP and naturally extends to online inverse optimization. Our proposed adaptive sampling framework is detailed in Section \ref{sec:ASP}. In Section \ref{sec:CompSt}, results from the computational studies are presented. Finally, we conclude in Section \ref{sec:Conc}. All omitted proofs can be found in the online supplement, section A.
% \paragraph{Notation} $e$ is a column vector of all ones, $\lVert \cdot \rVert$ denotes some norm. {\color{Maroon} - Is this all we need? If yes, we can remove this paragraph and instead just mention the definition of a symbol the first time it appears in the paper.}

\section{Background of Inverse Linear Optimization}
\label{sec:Challenges}

Consider a decision-making problem that can be represented as an LP of the following form:
\begin{equation}
\label{eqn:FOP}
\tag{FOP}
\begin{aligned}
    \minimize\limits_{x \in \mathbb{R}^n} \quad & c^{\top} x \\
    \st \quad & A x \leq b,
\end{aligned}
\end{equation}
which we call the forward optimization problem (FOP). The cost vector $c \in \mathbb{R}^n$ is unknown; however, experiments perturbing values in $A \in \mathbb{R}^{m \times n}$ and $b \in \mathbb{R}^{m}$ can be designed to help improve our estimate of $c$. These experiments are subject to certain problem-specific restrictions on $A$ and $b$, and we denote the set of their allowed values by $\Pi$, i.e. $(A,b) \in \Pi$. Moreover, we assume that $\Pi$ is such that for any $(A,b) \in \Pi$, the polyhedron represented by $Ax \leq b$ is compact and nonempty. This is a mild assumption as in essentially all real-world problems, the decision variables are bounded. The results of the perturbation experiments, which are assumed to be optimal solutions to (FOP), are observed with some random noise. For a specific experiment, multiple samples can be collected such that one can recover the true optimal solution with some confidence. In what follows, we refer to this estimate of the true optimal solution as the \textit{denoised} estimate, and the process of obtaining it as \textit{denoising} the data.

Given observations, the inverse optimization problem (IOP) is to obtain an estimate of $c$, $\hat{c}$, such that the difference between the observations and the solutions obtained from solving \eqref{eqn:FOP} with $\hat{c}$ as the cost vector is minimized. The IOP is commonly formulated as follows:
\begin{equation}
\label{eqn:IOP_general}
\begin{aligned}
    \minimize\limits_{\hat{c} \in \mathbb{R}^n, \, \hat{x}} \quad & \sum\limits_{i \in \mathcal{I}}\sum\limits_{j \in \mathcal{J}_i}\, \lVert x_{ij} - \hat{x}_{ij} \rVert \\
    \st \quad & \hat{x}_{ij} \in \argmin\limits_{\tilde{x} \in \mathbb{R}^n} \left\lbrace \hat{c}^{\top} \tilde{x}:  A_i \tilde{x} \leq b_i \right\rbrace \quad \forall \, i \in \mathcal{I}, \, j \in \mathcal{J}_i,
\end{aligned}
\end{equation}
where $\mathcal{I}$ is the set of experiments, where each experiment $i$ is associated with inputs $(A_i, b_i)$, $\mathcal{J}_i$ denotes the set of noisy observations for experiment $i$, and $x_{ij}$ is the observed output for $j \in \mathcal{J}_i$. The objective is to choose $\hat{c}$ and $\hat{x}$ such that the loss function, which is defined as the sum of some norm of the residuals, is minimized. Formulation \eqref{eqn:IOP_general} generalizes existing variants of the IOP from the literature. Some consider a setting in which $A$ and $b$ cannot be changed, which leads to the case of $|\mathcal{I}| = 1$ \citep{Chan2018, Chan2019}. Others consider random sampling of $A$ and $b$ without assigning the samples to distinct sets corresponding to specific inputs \citep{Aswani2018}; in this case, we have $|\mathcal{I}| = N$ with $N$ being the total number of samples, and $|\mathcal{J}_i| = 1$ for all $i \in \mathcal{I}$. In fact, splitting the set of samples into input-specific subsets does not affect a formulation like \eqref{eqn:IOP_general}; however, it will be an essential feature of our proposed alternative approach (more in Section \ref{sec:NewApproach}).

Problem \eqref{eqn:IOP_general} is a bilevel optimization problem and is typically solved by replacing its lower-level problems with their optimality conditions. While most existing works make use of strong duality \citep{Aswani2018, Chan2019, Shahmoradi2019}, some have also applied reformulations based on the Karush-Kuhn-Tucker (KKT) conditions \citep{Boyd2011, Saez-Gallego2016}. In case of LPs, both optimality conditions are equivalent. In this work, we use a KKT-based approach as the duality-based formulation is nonlinear and nonconvex in the constraints due to the presence of bilinear terms, whereas the constraints of the KKT-based formulation can be linearized by introducing binary variables, which is advantageous from a computational standpoint. Thus, we arrive at the following mixed-integer reformulation of \eqref{eqn:IOP_general}:
\begin{subequations}
\label{eqn:IOP_general_KKT}
\begin{align}
    \minimize\limits_{\hat{c} \in \mathbb{R}^n, \, \hat{x}, \, s, \, \lambda, \, z} \quad & \sum\limits_{i \in \mathcal{I}}\sum\limits_{j \in \mathcal{J}_{i}} \lVert x_{ij} - \hat{x}_{ij} \rVert \\
    \st  \quad \; & \hat{c} + A_i^{\top} \lambda_{ij} = 0 \quad \forall \, i \in \mathcal{I},\, j \in \mathcal{J}_i \label{eqn:dualfeas} \\
    & A_i\hat{x}_{ij} + s_{ij} = b_i \quad \forall \, i \in \mathcal{I},\, j \in \mathcal{J}_{i} \label{eqn:primalfeas}\\
    & \lambda_{ij} \leq Mz_{ij} \quad \forall \, i \in \mathcal{I},\, j \in \mathcal{J}_{i} \label{eqn:complementarity1} \\
    & s_{ij} \leq M(e-z_{ij}) \quad \forall \, i \in \mathcal{I},\, j \in \mathcal{J}_{i} \label{eqn:complementarity2} \\
    & \hat{x}_{ij} \in \mathbb{R}^n, \, s_{ij} \in \mathbb{R}_+^m, \, \lambda_{ij} \in \mathbb{R}_+^m, \, z_{ij} \in \{0,1\}^m \quad \forall \, i \in \mathcal{I}, \, j \in \mathcal{J}_{i} \label{eqn:varbounds},
\end{align}
\end{subequations}
where $M$ is a sufficiently large parameter and $e$ denotes the all-ones vector. Constraints \eqref{eqn:dualfeas}, \eqref{eqn:primalfeas}, and \eqref{eqn:complementarity1}--\eqref{eqn:complementarity2} correspond to the stationarity, primal feasibility, and complementary slackness conditions, respectively. The following theorem characterizes the solution set of \eqref{eqn:IOP_general_KKT}.

\begin{theorem}\label{thm:IntersectCones}
For a given feasible $\hat{x}$, the set of feasible $\hat{c}$ in problem \eqref{eqn:IOP_general_KKT} is a polyhedral cone.
\end{theorem}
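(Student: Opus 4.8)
The plan is to fix the feasible point $\hat{x}$ throughout and to eliminate, block by block, every decision variable of \eqref{eqn:IOP_general_KKT} other than $\hat{c}$, so that what remains describing the feasible $\hat{c}$ is manifestly an intersection of polyhedral cones.

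First I would use primal feasibility \eqref{eqn:primalfeas}: with $\hat{x}$ fixed, the slacks are determined, $s_{ij} = b_i - A_i \hat{x}_{ij}$, and feasibility of $\hat{x}$ gives $s_{ij} \geq 0$. Writing $\mathcal{A}_{ij} = \{k : (A_i \hat{x}_{ij})_k = (b_i)_k\}$ for the active-constraint set at observation $(i,j)$, the big-$M$ constraints \eqref{eqn:complementarity1}--\eqref{eqn:complementarity2} then reduce to the expected complementarity pattern: for $k \notin \mathcal{A}_{ij}$, $(s_{ij})_k > 0$ forces $z_{ijk} = 0$ and hence $(\lambda_{ij})_k = 0$, whereas for $k \in \mathcal{A}_{ij}$ one may take $z_{ijk} = 1$ and keep $(\lambda_{ij})_k \geq 0$ unconstrained (modulo the caveat on $M$ below).

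Next I would bring in stationarity \eqref{eqn:dualfeas}, which reads $\hat{c} = -A_i^{\top} \lambda_{ij}$ for each $(i,j)$. Together with the zero pattern just derived, $\hat{c}$ is consistent with observation $(i,j)$ exactly when $\hat{c} \in \mathcal{C}_{ij} := \bigl\{ -\sum_{k \in \mathcal{A}_{ij}} \mu_k (A_i)_{k,\cdot}^{\top} : \mu \geq 0 \bigr\}$, i.e. the cone generated by the negated rows of $A_i$ that are active at $\hat{x}_{ij}$ (geometrically, the normal cone of the polyhedron $\{A_i x \leq b_i\}$ at $\hat{x}_{ij}$). Each $\mathcal{C}_{ij}$ is a finitely generated cone, hence a polyhedral cone by the Minkowski--Weyl theorem; equivalently, it is the projection onto $\hat{c}$-space of a polyhedral cone in $(\hat{c}, \lambda_{ij})$-space, and Fourier--Motzkin elimination preserves conic polyhedral structure. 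The feasible set of $\hat{c}$ in \eqref{eqn:IOP_general_KKT} is then $\bigcap_{i \in \mathcal{I}} \bigcap_{j \in \mathcal{J}_i} \mathcal{C}_{ij}$, and since a finite intersection of polyhedral cones is again a polyhedral cone, this establishes the theorem.

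Most of the above is routine polyhedral bookkeeping; the one place I expect to have to be careful is the constant $M$. With $M$ finite, the projection of the feasible region onto $\hat{c}$-space is the cone intersection above \emph{truncated by a large box}, so to state the result cleanly I would first make explicit that \eqref{eqn:IOP_general_KKT} is being read as the standard big-$M$ surrogate of the exact stationarity/primal-feasibility/complementary-slackness system (valid for $M$ sufficiently large), and only then pass to the cone description; alternatively, one works directly with \eqref{eqn:IOP_general} under the exact complementarity condition, for which no such truncation arises.
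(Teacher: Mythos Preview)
Your proof is correct and follows essentially the same route as the paper's: for each $(i,j)$, the KKT conditions pin $\hat{c}$ to the cone generated by the negated active rows of $A_i$ at $\hat{x}_{ij}$, and intersecting over all observations yields a polyhedral cone. Your caveat about the finite big-$M$ truncation is a valid technical point that the paper leaves implicit under its ``sufficiently large $M$'' assumption.
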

\begin{proof}
Consider \eqref{eqn:IOP_general_KKT} for a specific experiment $i$ and sample $j \in \mathcal{J}_i$ and a corresponding feasible $\hat{x}_{ij}$. Let $\mathcal{K}_i$ be the set of constraints for experiment $i$, i.e. resulting from $A_i$ and $b_i$. From \eqref{eqn:varbounds}, we have that $\lambda_{ij} \geq 0$; hence, if $\lambda_{ijk} = 0$ for all $k \in \mathcal{K}_i$, then \eqref{eqn:dualfeas} imply that $\hat{c} = 0$. Otherwise, if $\exists \, k \in \mathcal{K}_i$ such that $\lambda_{ijk} > 0$, then from constraints \eqref{eqn:complementarity1}--\eqref{eqn:complementarity2}, we have that $s_{ijk} = 0$, and \eqref{eqn:primalfeas} imply that $\hat{x}_{ij}$ is such that $a_{ik}^{\top}\hat{x}_{ij} = b_{ik}$, where $a_{ik} \in \mathbb{R}^n$ defines the $k$th row of $A_i$. Hence, from \eqref{eqn:dualfeas}, we have that $\hat{c} \in \mathrm{cone}\left(\{-a_{it}\}_{t \in \mathcal{T}(\hat{x}_{ij})}\right)$, where $\mathcal{T}(\hat{x}_{ij})$ denotes the set of constraints active at $\hat{x}_{ij}$. Since $\mathcal{T}(\hat{x}_{ij})$ is a finite set, the feasible region for $\hat{c}$ associated with experiment $i$ and sample $j$ is a polyhedral cone. As this statement holds for every $i \in \mathcal{I}$ and $j \in \mathcal{J}_i$, we have
\begin{equation*}
    \hat{c} \in \bigcap\limits_{i \in \mathcal{I}, \, j \in \mathcal{J}_i} \mathrm{cone}\left(\{-a_{it}\}_{t \in \mathcal{T}(\hat{x}_{ij})}\right),
\end{equation*} 
which is the intersection of a finite number of polyhedral cones, hence also a polyhedral cone.
\end{proof}

\paragraph{Admissible Set.}
As a consequence of Theorem \ref{thm:IntersectCones}, there is no unique solution to problem \eqref{eqn:IOP_general}, or equivalently \eqref{eqn:IOP_general_KKT}, since for any optimal $\hat{c}$, $\alpha \hat{c}$ with $\alpha$ being any nonnegative scalar different than 1 yields another optimal solution. Instead, by means of Theorem 1, we can determine the full set of ``inverse-optimal'' $\hat{c}$, which we refer to as the \textit{admissible set}. Also, to avoid the trivial solution $\hat{c} = 0$, it is common to use a slight variant of \eqref{eqn:IOP_general} that restricts the length of $\hat{c}$ by adding a norm constraint, e.g. $\lVert \hat{c} \rVert_p = 1$ \citep{Esfahani2018, Chan2019, Shahmoradi2019}. However, the use of a norm constraint introduces additional nonconvexity into the problem formulation. While the choice of $p$-norm has been arbitrary, under some special conditions on $c$, the $1$-norm and $\infty$-norm have been shown to lead to tractable formulations \citep{Chan2019}.

\paragraph{Noisy Observations.}
As \eqref{eqn:FOP} is an LP, with a nonzero $c$, any optimal solution will lie on the boundary of the polyhedral feasible region. Problem \eqref{eqn:IOP_general} with a $p$-norm constraint on $c$ can be interpreted as the projection of noisy observations onto one of the polyhedron's facets such that the total projection distance is minimized \citep{Chan2019}. While this approach provides good solutions when the FOP is strongly convex, it often leads to a severely restricted admissible set when the FOP is an LP. As illustrated in Figure \ref{fig:Feasreg}, even if the true solution lies at a vertex, noise in the data can cause them to get projected onto one of the facets, making a vector orthogonal to that facet the only feasible $\hat{c}$. As highlighted by \cite{Shahmoradi2019}, this formulation also leads to unstable predictions in the presence of outliers in the data.

\begin{figure}[h!]
\includegraphics[width=0.25\textwidth]{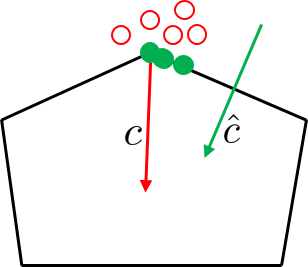}
\centering
\caption{(Color online) The polytope represents the feasible region of the FOP defined by $A_ix \leq b_i$, noisy data samples $x_{ij}$ are depicted by red hollow circles, and their denoised estimates $\hat{x}_{ij}$ are shown as green filled circles. The true cost vector is $c$, whereas $\hat{c}$ is the solution to \eqref{eqn:IOP_general} with an additional $p$-norm constraint on $\hat{c}$.}
\label{fig:Feasreg}
\end{figure}

\paragraph{Reference Cost.}
It is important to note that the problem of estimating $c$ given noisy observations consists of two tasks that have to be performed simultaneously: (i) denoising the data to obtain estimates of the real optimal solutions, and (ii) using these solutions to estimate $c$. Formulations of the form \eqref{eqn:IOP_general} and existing variants thereof mainly address the first aspect, but have deficiencies, as elucidated above, when it comes to finding a good point estimate of $c$, especially when the admissible set is large or the number of observations is small. Traditionally, when sampling is deterministic, inverse optimization has been facilitated by using a reference cost vector $\bar{c}$ and employing an objective function of the form $\lVert \bar{c} - \hat{c} \rVert_2^2$ \citep{Ahuja2001, Heuberger2004}. Such a reference cost represents a prior belief that is available in most practical applications, e.g. obtained through first-principles modeling or expert knowledge. A cost estimate $\hat{c}$ that is close to a reference $\bar{c}$ is often desired. It seems that the notion of such a reference, which can aid the process of recovering the real cost vector, has largely been ignored in the noisy case. 

\section{A Two-Phase Approach to Inverse Optimization}
\label{sec:NewApproach}

In this section, we propose a general inverse linear optimization model that finds a nontrivial estimate of the cost vector that, among all the ones that minimize the loss function, most closely resembles a reference cost vector. We discuss the main properties of this IOP and develop an exact two-phase solution algorithm.

\subsection{Problem Formulation and Properties}
We first define $\mathcal{C}'$ as follows:
\small
\begin{equation}
\label{eqn:Cprime_def}
    \mathcal{C}' := \left\lbrace \hat{c}: (\hat{c}, \hat{x}) \in \argmin\limits_{\hat{c} \in \mathbb{R}^n \setminus \{0\}, \, \hat{x}} \left\lbrace \sum\limits_{i \in \mathcal{I}} \sum\limits_{j \in \mathcal{J}_{i}} \, \lVert x_{ij} - \hat{x}_{ij} \rVert: \hat{x}_{ij} \in \argmin\limits_{\tilde{x} \in \mathbb{R}^n} \left\lbrace \hat{c}^{\top} \tilde{x}:  A_i \tilde{x} \leq b_i \right\rbrace \; \forall \, i \in \mathcal{I}, \, j \in \mathcal{J}_i \right\rbrace \right\rbrace,
\end{equation}
\normalsize
where it is assumed that for all $i \in \mathcal{I}$, $(A_i, b_i)$ is chosen from some set $\Pi$ such that the polyhedron $\{\tilde{x}: A_i \tilde{x} \leq b_i\}$ is compact and nonempty. 

Note that the loss minimization problem \eqref{eqn:Cprime_def} is essentially problem \eqref{eqn:IOP_general}; however, the above representation emphasizes the fact that \eqref{eqn:IOP_general} does not have a unique optimal $\hat{c}$, but rather a set of optimal solutions. The set $\mathcal{C}'$ consists of all these optimal $\hat{c}$ except for the trivial solution $\hat{c} = 0$. Next, we solve the following problem to choose a $\hat{c}$ from $\mathcal{C} = \mathcal{C}' \cup \{0\}$ that most resembles a known reference ${\bar{c}}$:
\begin{equation}
\label{eqn:InitialIOP}
\begin{aligned}
    \minimize\limits_{\hat{c}} \quad & \lVert \bar{c} - \hat{c} \rVert_2^2 \\
    \st \quad & \hat{c} \in \mathcal{C}.
\end{aligned}
\end{equation}
While we exclude the trivial solution $\hat{c} = 0$ from $\mathcal{C}'$, problem \eqref{eqn:InitialIOP} considers $\mathcal{C}$ as the set of admissible cost estimates, which does include the trivial solution. The rationale behind this setup is the following: When determining $\mathcal{C}'$, we do not want to consider $\hat{c} = 0$, which allows a minimum loss but does not provide any useful information. However, admitting the trivial solution in \eqref{eqn:InitialIOP} can be helpful since the unlikely case in which $\hat{c} = 0$ is the optimal solution to \eqref{eqn:InitialIOP} would immediately indicate that $\bar{c}$ is a very bad reference. In addition, as discussed later in this section, admitting $\mathcal{C}$ in \eqref{eqn:InitialIOP} results in useful theoretical properties.

As mentioned in Section \ref{sec:Challenges}, solving problem \eqref{eqn:InitialIOP} provides good results in the deterministic case; however, it fails when noisy data are considered since projection of data onto a facet causes $\mathcal{C}$ to be a single ray. To overcome this issue, we propose to consider, instead of $\mathcal{C}'$, the following slightly modified set:
\small
\begin{equation}
\label{eqn:Vertex_set}
    \widehat{\mathcal{C}}' := \left\lbrace \hat{c}: (\hat{c}, \hat{x}) \in \argmin\limits_{\hat{c} \in \mathbb{R}^n \setminus \{0\}, \, \hat{x}} \left\lbrace \sum\limits_{i \in \mathcal{I}} \sum\limits_{j \in \mathcal{J}_{i}} \, \lVert x_{ij} - \hat{x}_{i} \rVert: \hat{x}_{i} \in \argmin\limits_{\tilde{x} \in \mathbb{R}^n} \left\lbrace \hat{c}^{\top} \tilde{x}:  A_i \tilde{x} \leq b_i \right\rbrace \cap \mathcal{V}_i \; \forall \, i \in \mathcal{I} \right\rbrace \right\rbrace,
\end{equation}
\normalsize
where $\mathcal{V}_i$ denotes the set of extreme points of $\{\tilde{x}: A_i \tilde{x} \leq b_i\}$. This leads to the following IOP:
\begin{equation}
\label{eqn:IOP}
\tag{IOP}
\begin{aligned}
    \minimize\limits_{\hat{c}} \quad & \lVert \bar{c} - \hat{c} \rVert_2^2 \\
    \st \quad & \hat{c} \in \widehat{\mathcal{C}},
\end{aligned}
\end{equation}
where $\widehat{\mathcal{C}} = \widehat{\mathcal{C}}' \cup \{0\}$. The set $\widehat{\mathcal{C}}'$ considers the projection of the data for each experiment $i$ to one of the vertices of $\{\tilde{x}: A_i \tilde{x} \leq b_i\}$ such that the total projection distance is minimized. 
We consider two different cases depending on the nature of \eqref{eqn:FOP} to argue why this approach leads to a more appropriate admissible set. For the ease of exposition, our discussion is restricted to the case of a single experiment, i.e. $|\mathcal{I}| = 1$, with multiple samples.

\begin{enumerate}
\item If \eqref{eqn:FOP} has a unique optimal solution, the noisy samples are likely to be located close to the vertex at which the optimal solution lies (see Figure \ref{fig:vertexproja}). By solving the loss minimization problem in \eqref{eqn:Vertex_set}, we obtain a vertex $\hat{x}_i$, and the resulting $\widehat{\mathcal{C}}$ is an exhaustive set of cost vectors that can make $\hat{x}_i$ optimal for \eqref{eqn:FOP}. Recall from Theorem \ref{thm:IntersectCones} that any $\hat{c} \in \widehat{\mathcal{C}}$ can be expressed as a conic combination of the vectors orthogonal to the facets associated with the constraints active at $\hat{x}_i$. Thus, if $\hat{x}_i$ is the ``right" vertex, i.e. the true optimal solution, the true cost vector $c$ will be one of the vectors in $\widehat{\mathcal{C}}$. This is in contrast to $\mathcal{C}$, which in this case would be a single ray that does not contain $c$.

\item If \eqref{eqn:FOP} has multiple optimal solutions, the noisy samples are likely to be located close to the facet that represents the set of optimal solutions (see Figure \ref{fig:vertexprojb}). If solving the loss minimization problem in \eqref{eqn:Vertex_set} results in a vertex $\hat{x}_i$ that is an optimal solution to \eqref{eqn:FOP}, $c$ will be one of the extreme rays of $\widehat{\mathcal{C}}$. For this case, in most practical instances, we expect the data to show a preference toward a particular vertex of the facet, i.e. there is a unique optimal $\hat{x}_i$. Even if multiple $\hat{x}_i$ achieve the same loss, we still have $c \in \widehat{\mathcal{C}}$ as long as at least one of the $\hat{x}_i$ is in fact an optimal solution to \eqref{eqn:FOP}. In this case, one may argue that $\widehat{\mathcal{C}}$ is overly large compared to $\mathcal{C}$; however, if $\bar{c} = c$, the true cost vector will be recovered when solving \eqref{eqn:IOP}.
\end{enumerate}
 
\begin{figure}[h]
\centering
\subfloat[]{
    \label{fig:vertexproja}
    \includegraphics[width=0.25\textwidth]{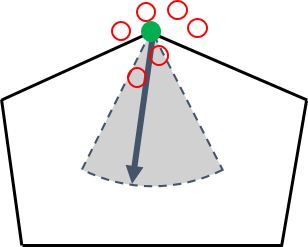}} \hspace{3em}
\subfloat[]{
    \label{fig:vertexprojb}
    \includegraphics[width=0.25\textwidth]{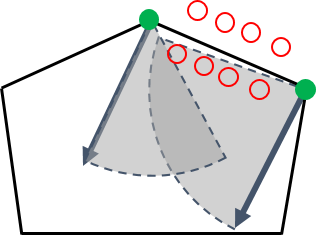}}
\caption{(Color online) Different cases when projection of noisy data is restricted to one of the vertices of the polyhedral feasible region. Arrows show the true cost vector $c$, noisy data samples $x_{ij}$ are depicted by red hollow circles, and their denoised estimates obtained from solving the loss minimization problem in \eqref{eqn:Vertex_set} are shown as green filled circles. The gray shaded regions indicate the set $\widehat{\mathcal{C}}$. In (b), the loss minimization problem will output only one of the two candidate optimal solutions (shown in green) and the corresponding gray shaded region will form the set $\widehat{\mathcal{C}}$.}
\end{figure}

\begin{lemma} \label{lem:SolutionExist}
The set $\widehat{\mathcal{C}}'$ is nonempty.
\end{lemma}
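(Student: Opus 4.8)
The plan is to exhibit at least one feasible point of the loss-minimization problem defining $\widehat{\mathcal{C}}'$ and then argue that an optimal solution exists. First I would check feasibility: for any fixed $\hat{c} \in \mathbb{R}^n \setminus \{0\}$ and any $i \in \mathcal{I}$, the polyhedron $\{\tilde{x} : A_i \tilde{x} \leq b_i\}$ is, by assumption, compact and nonempty, so the linear program $\min\{\hat{c}^\top \tilde{x} : A_i \tilde{x} \leq b_i\}$ attains its optimum, and moreover — because a compact polyhedron is the convex hull of its (finitely many) extreme points — the optimum is attained at a vertex. Hence $\argmin\{\hat{c}^\top \tilde{x} : A_i \tilde{x} \leq b_i\} \cap \mathcal{V}_i \neq \emptyset$ for every $i$, and picking one such vertex $\hat{x}_i$ for each $i$ gives a feasible pair $(\hat{c}, \hat{x})$ for the inner problem in \eqref{eqn:Vertex_set}. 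So the feasible set of that problem is nonempty.

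Next I would establish that the infimum of the loss $\sum_{i \in \mathcal{I}} \sum_{j \in \mathcal{J}_i} \lVert x_{ij} - \hat{x}_i \rVert$ over this feasible set is actually attained, which is what makes $\widehat{\mathcal{C}}'$ nonempty (it is the set of $\hat{c}$-components of optimal solutions). The key observation is that the loss depends only on the denoised points $\hat{x}_i$, and each $\hat{x}_i$ is constrained to lie in the \emph{finite} set $\mathcal{V}_i$ of extreme points of the $i$-th polyhedron. Therefore the set of achievable loss values, as $(\hat{x}_i)_{i \in \mathcal{I}}$ ranges over $\prod_{i \in \mathcal{I}} \mathcal{V}_i$, is a finite set of nonnegative reals and hence has a minimum; let $(\hat{x}_i^\star)_{i \in \mathcal{I}}$ be a minimizer. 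It remains to produce a nonzero $\hat{c}$ for which each $\hat{x}_i^\star$ is an optimizer of the corresponding LP — equivalently, for which the reverse-optimality condition holds simultaneously for all $i$.

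The one genuinely nontrivial point — and the place I expect the argument to need care — is that this $\hat{c}$ must be \emph{common to all experiments} and \emph{nonzero}. By the reasoning in the proof of Theorem \ref{thm:IntersectCones}, a cost vector $\hat{c}$ makes a given vertex $\hat{x}_i^\star$ optimal for experiment $i$ precisely when $\hat{c} \in \mathrm{cone}(\{-a_{it}\}_{t \in \mathcal{T}(\hat{x}_i^\star)})$, where $\mathcal{T}(\hat{x}_i^\star)$ is the set of constraints active at $\hat{x}_i^\star$; so I need $\bigcap_{i \in \mathcal{I}} \mathrm{cone}(\{-a_{it}\}_{t \in \mathcal{T}(\hat{x}_i^\star)})$ to contain a nonzero vector. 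Since $\hat{x}_i^\star$ is a vertex of a full-dimensional-free polytope in $\mathbb{R}^n$, there are at least $n$ linearly independent active constraints at it, so the normal cone $\mathrm{cone}(\{-a_{it}\}_t)$ is full-dimensional (solid) for each $i$. The cleanest way to finish is to appeal to the fact that each $\hat{x}_i^\star$ was \emph{selected} (in the previous paragraph) as the vertex attained by solving the LP for some fixed probe cost vector: if I run the previous step not with an arbitrary feasible $\hat{c}$ but by first fixing a single $\hat{c}_0 \neq 0$ and letting $\hat{x}_i^\star(\hat{c}_0)$ be an optimal vertex of each LP under $\hat{c}_0$, then $\hat{c}_0$ itself certifies simultaneous optimality, so $(\hat{c}_0, (\hat{x}_i^\star(\hat{c}_0))_i)$ is feasible and, among all such probe-induced points, one minimizes the (finitely many) loss values; that minimizer's $\hat{c}$-component witnesses $\widehat{\mathcal{C}}' \neq \emptyset$. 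I would present the argument in this order — feasibility via compactness, finiteness of the loss range via finiteness of $\prod_i \mathcal{V}_i$, and then the observation that any optimal vertex tuple comes equipped with a nonzero certifying $\hat{c}$ — with the last step being the one to state carefully.
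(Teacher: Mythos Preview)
Your proposal is correct and ultimately coincides with the paper's argument: nonemptiness of the feasible set follows from compactness of each $\{A_i\tilde{x}\le b_i\}$ (so an optimal vertex exists for any nonzero probe $\hat{c}$), and attainment of the outer $\argmin$ follows because the loss depends only on $\hat{x}$, whose feasible values lie in the finite set $\prod_i \mathcal{V}_i$.

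The only difference is presentational. The paper goes straight from ``the inner $\argmin\cap\mathcal{V}_i$ is nonempty and finite for every $\hat{c}$'' to ``hence some $(\hat{c},\hat{x})$ minimizes the loss,'' without ever considering the unconstrained minimum over $\prod_i\mathcal{V}_i$. Your detour --- first minimizing the loss over \emph{all} vertex tuples and then worrying whether the resulting $(\hat{x}_i^\star)_i$ admits a common nonzero certifying $\hat{c}$ --- is unnecessary, and you correctly recognize this and self-correct by restricting to ``probe-induced'' tuples, which is exactly the feasible set of the outer problem. (The aside about full-dimensional normal cones is indeed a dead end: even solid cones can intersect only at the origin, so that route would not have worked.) In your write-up you can drop the detour entirely and argue directly: the feasible set is nonempty, its $\hat{x}$-projection is finite, hence the infimum of the loss is achieved at some feasible pair whose $\hat{c}$-component lies in $\widehat{\mathcal{C}}'$.
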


% \begin{proof}
% Since $\{\tilde{x}: A_i \tilde{x} \leq b_i\}$ is assumed to be compact and nonempty, $\argmin_{\tilde{x} \in \mathbb{R}^n} \left\lbrace \hat{c}^{\top} \tilde{x}: A_i \tilde{x} \leq b_i \right\rbrace$ is nonempty, and for every $\hat{c} \in \mathbb{R}^n$, there is at least one vertex of $\{\tilde{x}: A_i \tilde{x} \leq b_i\}$ that minimizes $\hat{c}^{\top} \tilde{x}$. This implies that $\argmin_{\tilde{x} \in \mathbb{R}^n} \left\lbrace \hat{c}^{\top} \tilde{x}: A_i \tilde{x} \leq b_i \right\rbrace \cap \mathcal{V}_i$ is nonempty and finite, due to the finiteness of $\mathcal{V}_i$, for all $i \in \mathcal{I}$ and $\hat{c} \in \mathbb{R}^n$. As a result, there is at least one $(\hat{c}, \hat{x})$ that minimizes the loss function in the definition of the outer $\argmin$ set in \eqref{eqn:Vertex_set}; hence, $\widehat{\mathcal{C}}'$ is nonempty.
% \end{proof}
\begin{theorem}
\label{thm:finiteSolutions}
The set of optimal solutions to \eqref{eqn:IOP} is finite.
\end{theorem}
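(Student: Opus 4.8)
The plan is to show that the feasible set $\widehat{\mathcal{C}}$ of \eqref{eqn:IOP} is a \emph{finite} union of nonempty polyhedral cones, and then to exploit uniqueness of the Euclidean projection onto each of them.

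\textbf{Step 1: $\widehat{\mathcal{C}}$ is a finite union of polyhedral cones.} The key observation is that in the definition of $\widehat{\mathcal{C}}'$ in \eqref{eqn:Vertex_set} the loss $\sum_{i \in \mathcal{I}} \sum_{j \in \mathcal{J}_i} \lVert x_{ij} - \hat{x}_i \rVert$ depends only on the selection of vertices $(\hat{x}_i)_{i \in \mathcal{I}}$ and not on $\hat{c}$. Since each $\mathcal{V}_i$ is finite and $\mathcal{I}$ is finite, there are only finitely many such selections. For a fixed selection, the argument in the proof of Theorem \ref{thm:IntersectCones} (necessity), together with an explicit construction of KKT multipliers supported on the active constraints (sufficiency), shows that the set of $\hat{c} \neq 0$ making every $\hat{x}_i$ optimal for the FOP with data $(A_i, b_i)$ is exactly $\bigl(\bigcap_{i \in \mathcal{I}} \mathrm{cone}(\{-a_{it}\}_{t \in \mathcal{T}(\hat{x}_i)})\bigr) \setminus \{0\}$, i.e. a polyhedral cone with the origin removed. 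By Lemma \ref{lem:SolutionExist}, at least one selection admits such a nonzero $\hat{c}$, so the minimum loss $L^\star$ over the selections admitting a compatible nonzero $\hat{c}$ is attained and finite. Hence $\widehat{\mathcal{C}}'$ is the union, over the finitely many such selections attaining loss $L^\star$, of the corresponding punctured cones, and re-including the origin yields $\widehat{\mathcal{C}} = \bigcup_{\ell=1}^{r} K_\ell$ with each $K_\ell$ a nonempty closed convex polyhedral cone.

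\textbf{Step 2: reduction to finitely many projections.} With $\widehat{\mathcal{C}} = \bigcup_{\ell=1}^{r} K_\ell$, the optimal value of \eqref{eqn:IOP} is $v^\star = \min_{1 \le \ell \le r} \bigl( \min_{\hat{c} \in K_\ell} \lVert \bar{c} - \hat{c} \rVert_2^2 \bigr)$. Each inner problem is the projection of $\bar{c}$ onto the nonempty closed convex set $K_\ell$ and hence, by the Hilbert projection theorem, has a \emph{unique} minimizer $\hat{c}_\ell^\star := \mathrm{proj}_{K_\ell}(\bar{c})$. If $\hat{c}^\star$ is optimal for \eqref{eqn:IOP}, then $\hat{c}^\star \in K_\ell$ for some $\ell$, so $v^\star = \lVert \bar{c} - \hat{c}^\star \rVert_2^2 \ge \min_{\hat{c} \in K_\ell} \lVert \bar{c} - \hat{c} \rVert_2^2 \ge v^\star$; equality throughout forces $\hat{c}^\star = \hat{c}_\ell^\star$. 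Therefore the set of optimal solutions to \eqref{eqn:IOP} is contained in the finite set $\{\hat{c}_1^\star, \dots, \hat{c}_r^\star\}$, which gives the claim.

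\textbf{Where the difficulty lies.} Step 2 is essentially routine once uniqueness of projection onto a closed convex set is invoked; the care is all in Step 1, namely (i) getting the bookkeeping around the trivial solution right --- \eqref{eqn:Vertex_set} excludes $\hat{c}=0$ while $\widehat{\mathcal{C}}$ re-includes it, so one must argue that the union of the punctured cones plus the origin is exactly $\bigcup_\ell K_\ell$ --- and (ii) ensuring the minimal loss is actually attained over the vertex selections that admit a nonzero compatible $\hat{c}$, which is precisely where Lemma \ref{lem:SolutionExist} enters. This decomposition also makes the later uniqueness result plausible: generically exactly one cone $K_\ell$ is strictly closest to $\bar{c}$, so the optimal $\hat{c}^\star$ is unique.
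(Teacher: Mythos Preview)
Your proposal is correct and follows essentially the same route as the paper: decompose $\widehat{\mathcal{C}}$ into a finite union of polyhedral cones indexed by the (finitely many) optimal vertex selections $\hat{x} \in \widehat{\mathcal{X}}^*$ via Theorem~\ref{thm:IntersectCones}, then use uniqueness of the Euclidean projection onto each cone to conclude. The paper's proof is slightly terser on the origin bookkeeping and simply invokes strict convexity over a convex region rather than the Hilbert projection theorem, but the structure and key ideas are identical.
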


\begin{proof}
    We start by characterising the set $\widehat{\mathcal{C}}'$. Consider the minimization problem that describes $\widehat{\mathcal{C}}'$:
    \begin{equation}
    \label{eqn:IOP_innerMin}
    \begin{aligned}
        \minimize\limits_{\hat{c} \in \mathbb{R}^n \setminus \{0\}, \, \hat{x}} \quad & \sum\limits_{i \in \mathcal{I}}\sum\limits_{j \in \mathcal{J}_i}\, \lVert x_{ij} - \hat{x}_i \rVert \\
        \st \quad & \hat{x}_i \in \argmin\limits_{\tilde{x} \in \mathbb{R}^n} \left\lbrace \hat{c}^{\top} \tilde{x}: A_i \tilde{x} \leq b_i \right\rbrace \cap \mathcal{V}_i \quad \forall \, i \in \mathcal{I}.
    \end{aligned}
    \end{equation}
    As stated in Lemma \ref{lem:SolutionExist}, the solution set of problem \eqref{eqn:IOP_innerMin} is nonempty. Furthermore, as $\mathcal{V}_i$ is finite and $\hat{x}_i \in \mathcal{V}_i$, the set of feasible $\hat{x}_i$ is finite. Since we also have a finite number of experiments, the set of feasible $\hat{x}$ to \eqref{eqn:IOP_innerMin} is finite. Hence, the set of optimal $\hat{x}$, which we denote by $\widehat{\mathcal{X}}^*$, is finite. From Theorem \ref{thm:IntersectCones}, the set of feasible $\hat{c}$ for any $\hat{x} \in \widehat{\mathcal{X}}^*$ is $\bigcap\limits_{i \in \mathcal{I}} \mathrm{cone}\left(\{-a_{it}\}_{t \in \mathcal{T}(\hat{x}_{i})}\right) \setminus \{0\}$, which is also the set of optimal $\hat{c}$ for that given optimal $\hat{x}$ as $\hat{c}$ does not appear in the objective function of \eqref{eqn:IOP_innerMin}. It follows that, considering all $\hat{x} \in \widehat{\mathcal{X}}^*$, the set $\widehat{\mathcal{C}}'$ can be expressed as follows:
    \begin{equation}
    \label{eqn:general_chat}
        \widehat{\mathcal{C}}' = \mathlarger{\bigcup}\limits_{\hat{x} \in \widehat{\mathcal{X}}^*}\left\lbrace\bigcap\limits_{i \in \mathcal{I}} \mathrm{cone}\left(\{-a_{it}\}_{t \in \mathcal{T}(\hat{x}_{i})}\right)\right\rbrace \setminus \{0\}.
    \end{equation}
    
    Problem \eqref{eqn:IOP} can then be solved by solving the following problem for every $\hat{x} \in \widehat{\mathcal{X}}^*$:
    \begin{equation}
    \label{eqn:IOP_outerMin}
    \begin{aligned}
        \minimize\limits_{\hat{c}} \quad & \lVert \bar{c} - \hat{c} \rVert_2^2 \\
        \st \quad & \hat{c} \in \bigcap\limits_{i \in \mathcal{I}} \mathrm{cone}\left(\{-a_{it}\}_{t \in \mathcal{T}(\hat{x}_{i})}\right),
    \end{aligned}
    \end{equation}
    which is the minimization of a strictly convex function over a convex feasible region and hence has a unique optimal solution, which we denote by $\tilde{c}(\hat{x})$. The set $\widetilde{\mathcal{C}} := \{\tilde{c}(\hat{x})\}_{\hat{x} \in \widehat{\mathcal{X}}^*}$ is finite. Hence, the set of optimal solutions to \eqref{eqn:IOP}, which can be expressed as
    \begin{equation}
        \widetilde{\mathcal{C}}^* := \left\lbrace \hat{c}: \hat{c} \in \widetilde{\mathcal{C}}, \; \lVert \bar{c} - \hat{c} \rVert_2^2 = \min_{\hat{x} \in \widehat{\mathcal{X}}^*} \lVert \bar{c} - \tilde{c}(\hat{x}) \rVert_2^2 \right\rbrace,
    \end{equation}
    is also finite.
\end{proof}

\begin{condition}
\label{asp:uniqueX}
There is a unique optimal $\hat{x}$ to problem \eqref{eqn:IOP_innerMin}.
\end{condition}

At first glance, Condition \ref{asp:uniqueX} seems to be very restrictive. But in fact, it holds in almost all practical instances. Consider an instance in which at any optimal solution to \eqref{eqn:IOP_innerMin}, there is a unique optimal $\hat{x}_i$ for each experiment $i$ except for one experiment $p$. Let $\hat{x}^*_i$ be the unique optimal $\hat{x}_i$ for all $i \in \mathcal{I} \setminus \{p\}$, and let $\widehat{\mathcal{X}}^*_p$ denote the set of multiple optimal $\hat{x}_p$. Then, the following two conditions have to hold: (i) The resulting loss associated with experiment $p$, i.e. $\sum_{j \in \mathcal{J}_p} \lVert x_{pj} - \hat{x}_p \rVert_2^2$, is the same for all $\hat{x}_p \in \widehat{\mathcal{X}}^*_p$. (ii) For every $\hat{x}_p \in \widehat{\mathcal{X}}^*_p$, there exists a $\hat{c}$ that renders all $\hat{x}^*_i$, $i \in \mathcal{I} \setminus \{p\}$, and $\hat{x}_p$ optimal for the FOP, which is equivalent to the following condition:
\begin{equation}
\label{eqn:nomultiplesoln}
    \mathrm{cone}\left(\{-a_{pt}\}_{t \in \mathcal{T}(\hat{x}_{p})}\right) \mathlarger{\bigcap} \left(\bigcap\limits_{i \in \mathcal{I} \setminus \{p\}} \mathrm{cone}\left(\{-a_{it}\}_{t \in \mathcal{T}(\hat{x}^*_{i})}\right)\right) \neq \{0\} \quad \forall \, \hat{x}_p \in \widehat{\mathcal{X}}^*_p.
\end{equation}
While already the first condition is very unlikely to hold in practice, the second also becomes more improbable for $|\widehat{\mathcal{X}}^*_p| > 1$ as the number of experiments increases. Hence, we conclude that the case of $|\widehat{\mathcal{X}}^*| > 1$ is highly unlikely, which is consistent with our observation in our computational experiments. In theory, however, especially if $|\mathcal{I}|$ is small, there is the possibility that Condition \ref{asp:uniqueX} does not hold. In the next subsection, we present a two-phase algorithm that is guaranteed to find the complete set of optimal solutions to \eqref{eqn:IOP} even if Condition \ref{asp:uniqueX} does not hold.

\begin{corollary}
\label{cor:uniqueSolution}
If Condition \ref{asp:uniqueX} holds, \eqref{eqn:IOP} has a unique solution.
\end{corollary}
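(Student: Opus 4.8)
The plan is to lean entirely on the decomposition of \eqref{eqn:IOP} already established in the proof of Theorem~\ref{thm:finiteSolutions} and to observe that Condition~\ref{asp:uniqueX} collapses that decomposition to a single subproblem. Recall that the proof of Theorem~\ref{thm:finiteSolutions} shows that solving \eqref{eqn:IOP} amounts to (i) forming the nonempty finite set $\widehat{\mathcal{X}}^*$ of optimal $\hat{x}$ to \eqref{eqn:IOP_innerMin}; (ii) for each $\hat{x} \in \widehat{\mathcal{X}}^*$, solving \eqref{eqn:IOP_outerMin}, i.e. projecting $\bar{c}$ onto the polyhedral cone $\bigcap_{i \in \mathcal{I}} \mathrm{cone}(\{-a_{it}\}_{t \in \mathcal{T}(\hat{x}_i)})$; and (iii) keeping those projections $\tilde{c}(\hat{x})$ of smallest distance. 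In that notation the set of optimal solutions to \eqref{eqn:IOP} is $\widetilde{\mathcal{C}}^* \subseteq \widetilde{\mathcal{C}} = \{\tilde{c}(\hat{x})\}_{\hat{x} \in \widehat{\mathcal{X}}^*}$, and $\widetilde{\mathcal{C}}^*$ is nonempty.

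The first step I would carry out is to restate why \eqref{eqn:IOP_outerMin} has, for each fixed $\hat{x}$, exactly one optimizer: $\lVert \bar{c} - \hat{c} \rVert_2^2$ is strictly convex and coercive, and the feasible region is a nonempty (it contains the origin) closed convex set, so the minimizer $\tilde{c}(\hat{x})$ exists and is unique; this is already argued in the proof of Theorem~\ref{thm:finiteSolutions}, so I would simply invoke it. The second, and substantive, step is to apply the hypothesis: under Condition~\ref{asp:uniqueX}, $\widehat{\mathcal{X}}^*$ is a singleton, say $\widehat{\mathcal{X}}^* = \{\hat{x}^*\}$. Then $\widetilde{\mathcal{C}} = \{\tilde{c}(\hat{x}^*)\}$ is a singleton, and since $\widetilde{\mathcal{C}}^*$ is a nonempty subset of $\widetilde{\mathcal{C}}$, we conclude $\widetilde{\mathcal{C}}^* = \{\tilde{c}(\hat{x}^*)\}$; hence \eqref{eqn:IOP} has a unique solution.

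I do not expect a genuine obstacle here, as the statement is a direct corollary of Theorem~\ref{thm:finiteSolutions}. The only points requiring a little care are bookkeeping ones: (a) making sure the reduction from minimizing $\lVert \bar{c} - \hat{c}\rVert_2^2$ over $\widehat{\mathcal{C}} = \widehat{\mathcal{C}}' \cup \{0\}$ to the per-vertex cone projections is exactly the one used in the proof of Theorem~\ref{thm:finiteSolutions} --- in particular that adjoining $\{0\}$ merely restores the origin that every cone $\bigcap_{i \in \mathcal{I}} \mathrm{cone}(\{-a_{it}\}_{t \in \mathcal{T}(\hat{x}_i)})$ already contains, so that no optimal solution is dropped or spuriously introduced --- and (b) noting that the boundary case $\tilde{c}(\hat{x}^*) = 0$ (which can occur when $\bar{c}$ projects onto the apex of the cone) is harmless, since it still leaves a single optimal cost estimate. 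Beyond these remarks, the argument is essentially a one-line consequence of $|\widehat{\mathcal{X}}^*| = 1$ together with the uniqueness of $\tilde{c}(\cdot)$.
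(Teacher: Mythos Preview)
Your proposal is correct and takes essentially the same approach as the paper: the paper's proof is simply ``Condition~\ref{asp:uniqueX} implies that $|\widehat{\mathcal{X}}^*| = 1$. Corollary~\ref{cor:uniqueSolution} then directly follows from Theorem~\ref{thm:finiteSolutions}.'' Your argument unpacks exactly this, with additional care on points (a) and (b) that the paper leaves implicit.
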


% \begin{proof}
% Condition \ref{asp:uniqueX} implies that $|\widehat{\mathcal{X}}^*| = 1$. Corollary \ref{cor:uniqueSolution} then directly follows from Theorem \ref{thm:finiteSolutions}.
% \end{proof}

Thus, in most cases, solving $\eqref{eqn:IOP}$ will result in a unique $\hat{c}$ that is closest to the prior belief $\bar{c}$. This is in contrast to other approaches in the literature where a variant of \eqref{eqn:IOP_innerMin} is solved that results in a set of solutions from which a $\hat{c}$ is randomly selected. Note that while all $\hat{c} \in \widehat{\mathcal{C}}$ achieve the same prediction accuracy on the training set, they may not show the same performance on unseen data. Our approach of incorporating a prior belief $\bar{c}$ resolves this ambiguity and determines whether $\bar{c}$ is in the admissible set or, if not, how much it is outside the admissible set. This is a generally desirable feature in practice.

\subsection{Two-Phase Algorithm}

We start by presenting tractable reformulations of \eqref{eqn:IOP_innerMin} and \eqref{eqn:IOP_outerMin}, and then show how they can be combined to obtain the set of optimal solutions to \eqref{eqn:IOP}.

Applying a KKT-based approach, we obtain the following reformulation of \eqref{eqn:IOP_innerMin}:
\begin{varsubequations}{P1}
\label{eqn:P1}
\begin{align}
% \tag{P1}
    \minimize\limits_{\hat{c}, \, \hat{x}, \, s, \, \lambda, \, z, \, w, \, \hat{c}^+, \hat{c}^-} \quad & \sum\limits_{i \in \mathcal{I}}\sum\limits_{j \in \mathcal{J}_{i}} \lVert x_{ij} - \hat{x}_{i} \rVert \\
    \st \quad \quad & \hat{c} + A_i^{\top} \lambda_{i} = 0 \quad \forall \, i \in \mathcal{I} \label{eqn:KKTstart} \\
    & A_i\hat{x}_{i} + s_{i} = b_i \quad \forall \, i \in \mathcal{I}  \\ 
    & \lambda_{i} \leq M z_{i} \quad \forall \, i \in \mathcal{I} \\
    & s_{i} \leq M (e-z_{i}) \quad \forall \, i \in \mathcal{I}  \label{eqn:KKTend} \\
    & e^{\top} z_i \geq n \quad \forall \, i \in \mathcal{I} \label{eqn:Vertex_const} \\
    & \hat{c} = \hat{c}^+ - \hat{c}^- \label{eqn:pnorm_const1} \\
    & \hat{c}^+ \leq w  \\
    & \hat{c}^- \leq e-w \\
    & e^{\top} (\hat{c}^+ + \hat{c}^-) = 1  \label{eqn:pnorm_const2} \\
    & \hat{x}_i \in \mathbb{R}^n, \, s_i \in \mathbb{R}_+^m, \, \lambda_i \in \mathbb{R}_+^m, \, z_i \in \{0,1\}^m \quad \forall \, i \in \mathcal{I} \\
    & \hat{c} \in \mathbb{R}^n, \hat{c}^+ \in \mathbb{R}^n_+ , \hat{c}^- \in \mathbb{R}^n_+, \, w \in \{0,1\}^n,
\end{align}
\end{varsubequations}
where $M$ is a sufficiently large parameter. Constraints \eqref{eqn:KKTstart}--\eqref{eqn:KKTend} correspond to the KKT optimality conditions of the lower-level problems in \eqref{eqn:IOP_innerMin}, \eqref{eqn:Vertex_const} ensures that $\hat{x}_i$ is a vertex of the polyhedron $\{\tilde{x}: A_i \tilde{x} \leq b_i\}$, and \eqref{eqn:pnorm_const1}--\eqref{eqn:pnorm_const2} represent a linearization of the condition $\lVert \hat{c} \rVert_1 = 1$, which excludes $\hat{c} = 0$ from the set of feasible solutions. Note that \eqref{eqn:P1} is not an exact reformulation of \eqref{eqn:IOP_innerMin} since the constraint $\lVert \hat{c} \rVert_1 = 1$ cuts off more than just the single point $\hat{c} = 0$. However, the following property suggests that this restriction does not affect the solution set of \eqref{eqn:IOP}.

\begin{lemma}
\label{lem:epsilon}
Problems \eqref{eqn:IOP_innerMin} and \eqref{eqn:P1} have the same set of feasible $\hat{x}$.
\end{lemma}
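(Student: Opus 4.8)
The plan is to prove Lemma~\ref{lem:epsilon} by establishing the two inclusions between the $\hat{x}$-projections of the feasible sets of \eqref{eqn:IOP_innerMin} and \eqref{eqn:P1}. The point is that $\hat{c}$ enters both problems only through the requirement that each $\hat{x}_i$ be optimal for the corresponding FOP, and by Theorem~\ref{thm:IntersectCones} this requirement constrains $\hat{c}$ only to a cone, i.e. only up to positive scaling; so the normalization $\lVert \hat{c} \rVert_1 = 1$, while it discards most cost vectors, discards no $\hat{x}$ that was previously attainable.

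For ``$\subseteq$'', I would start from an $\hat{x}$ feasible for \eqref{eqn:IOP_innerMin}, certified by some $\hat{c} \neq 0$ with $\hat{x}_i \in \mathcal{V}_i$ and $\hat{x}_i$ optimal for $\min\{\hat{c}^{\top}\tilde{x} : A_i\tilde{x} \leq b_i\}$ for all $i$, and build a feasible point of \eqref{eqn:P1}. Replacing $\hat{c}$ by $\hat{c}/\lVert\hat{c}\rVert_1$ (which leaves every argmin set unchanged) and taking its positive/negative parts together with the sign pattern $w$ handles \eqref{eqn:pnorm_const1}--\eqref{eqn:pnorm_const2}. For each $i$, I would set $s_i = b_i - A_i\hat{x}_i$, let $\lambda_i$ be a basic optimal solution of the LP dual of the $i$-th FOP (which exists by strong duality, since the polyhedron is compact and nonempty) so that \eqref{eqn:KKTstart} and complementary slackness hold, and set $z_{ik} = 1$ exactly on the constraints active at $\hat{x}_i$. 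Then \eqref{eqn:KKTstart}--\eqref{eqn:KKTend} hold for $M$ large enough, and since $\hat{x}_i$ is a vertex it has at least $n$ active constraints, so \eqref{eqn:Vertex_const} holds as well.

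For ``$\supseteq$'', I would take an $\hat{x}$ feasible for \eqref{eqn:P1}, with its accompanying $(\hat{c},s,\lambda,z,w,\hat{c}^+,\hat{c}^-)$. Here \eqref{eqn:pnorm_const1}--\eqref{eqn:pnorm_const2} give $\lVert\hat{c}\rVert_1 = 1$, so $\hat{c} \neq 0$; \eqref{eqn:KKTstart}--\eqref{eqn:KKTend} are exactly the KKT conditions of the $i$-th FOP, which for an LP are sufficient for optimality, so each $\hat{x}_i$ is optimal for $\min\{\hat{c}^{\top}\tilde{x} : A_i\tilde{x} \leq b_i\}$; and \eqref{eqn:Vertex_const} together with $0 \leq s_i \leq M(e - z_i)$ forces at least $n$ active constraints at $\hat{x}_i$, hence $\hat{x}_i \in \mathcal{V}_i$. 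Therefore $\hat{x}$ is feasible for \eqref{eqn:IOP_innerMin}.

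The main obstacle I anticipate is the equivalence used on both sides: ``$\hat{x}_i$ has at least $n$ active constraints'' $\Longleftrightarrow$ ``$\hat{x}_i$ is a vertex of $\{\tilde{x} : A_i\tilde{x} \leq b_i\}$''. The direction ``vertex $\Rightarrow$ at least $n$ active'' is the standard rank-$n$ characterization of extreme points, but the direction ``at least $n$ active $\Rightarrow$ vertex'' requires that the active-constraint normals span $\mathbb{R}^n$ whenever there are at least $n$ of them --- i.e. the representation $A_i\tilde{x} \leq b_i$ has no redundancy that could make the tight-constraint matrix rank-deficient --- which is precisely the property the authors rely on when they say that \eqref{eqn:Vertex_const} ``ensures that $\hat{x}_i$ is a vertex'' and which should be recorded as a standing assumption on $\Pi$. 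The remaining technical point is the routine verification that the fixed ``sufficiently large'' $M$ in \eqref{eqn:P1} dominates the basic dual multipliers and the slacks that can occur over the finitely many experiments and vertices --- the usual big-$M$ bookkeeping.
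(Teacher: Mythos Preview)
Your proposal is correct and follows essentially the same approach as the paper: the key observation in both is that normalizing $\hat{c}$ by a positive scalar leaves each $\argmin$ set (and hence the feasible $\hat{x}$) unchanged, so the constraint $\lVert \hat{c} \rVert_1 = 1$ discards cost vectors but no $\hat{x}$. Your version is more explicit---building the full KKT certificate $(s,\lambda,z,w,\hat{c}^{\pm})$ rather than appealing to ``by construction''---and you correctly flag the vertex-characterization (``$\geq n$ active constraints $\Rightarrow$ vertex'') and big-$M$ issues that the paper simply takes for granted.
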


% \begin{proof}
% By construction, for a given feasible $\hat{c}$, \eqref{eqn:IOP_innerMin} and \eqref{eqn:P1} exhibit the same set of feasible $\hat{x}$. Observe that any $\hat{c}$ for which $\lVert \hat{c} \rVert_1 \neq 1$ is infeasible in \eqref{eqn:P1}. Let $\tilde{c}$ denote a $\hat{c}$ that is feasible in \eqref{eqn:IOP_innerMin} but not in \eqref{eqn:P1}, i.e. $\lVert \tilde{c} \rVert_1 \neq 1$. However, there exists a positive $\alpha$ such that $\hat{c} = \alpha \tilde{c}$ is feasible in \eqref{eqn:P1}, and it exhibits the same set of feasible $\hat{x}$ as $\tilde{c}$ does in \eqref{eqn:IOP_innerMin} since uniform positive scaling of a cost vector does not change the set of optimal solutions of an LP. Hence, problems \eqref{eqn:IOP_innerMin} and \eqref{eqn:P1} have the same set of feasible $\hat{x}$.
% \end{proof}

An exact reformulation of \eqref{eqn:IOP_outerMin} directly follows from the definition of polyhedral cones:
\begin{equation}
\label{eqn:P2}
\tag{P2}
\begin{aligned}
    \minimize\limits_{\hat{c}, \, \gamma} \quad & \lVert \bar{c} - \hat{c} \rVert_2^2 \\
    \st \quad & \hat{c}  = -\sum\limits_{t \in \mathcal{T}(\hat{x}_i)} \gamma_{it} \,  a_{it} \quad \forall \, i \in \mathcal{I} \\
    & \gamma_{it} \geq 0 \quad \forall \, i \in \mathcal{I}, \, t \in \mathcal{T}(\hat{x}_i),
\end{aligned}
\end{equation}
where the constraints require that $\hat{c}$ can be expressed as a conic combination of $-a_{it}$, $t \in \mathcal{T}(\hat{x}_i)$, for every $i \in \mathcal{I}$.

\begin{proposition}
\label{prp:TwoPhase}
If Condition \ref{asp:uniqueX} holds, the optimal solution to \eqref{eqn:IOP} can be obtained by solving \eqref{eqn:P1}, which provides the unique optimal $\hat{x}^*$, and subsequently solving \eqref{eqn:P2} with $\hat{x} = \hat{x}^*$.
\end{proposition}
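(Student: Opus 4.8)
The plan is to split the claim into two verifications that match the two phases of the procedure: (i) an optimal solution of \eqref{eqn:P1} has an $\hat{x}$-component equal to the unique element of $\widehat{\mathcal{X}}^*$, the set of optimal $\hat{x}$ for \eqref{eqn:IOP_innerMin}; and (ii) solving \eqref{eqn:P2} with that $\hat{x}^*$ returns exactly the unique optimal solution of \eqref{eqn:IOP}. Step (ii) is essentially already carried out inside the proof of Theorem \ref{thm:finiteSolutions}, so the only genuine work is in step (i), where the issue is that \eqref{eqn:P1} is \emph{not} an exact reformulation of \eqref{eqn:IOP_innerMin} (the normalization cuts off more than $\hat{c}=0$).

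For step (i), I would argue as follows. By Lemma \ref{lem:epsilon}, problems \eqref{eqn:IOP_innerMin} and \eqref{eqn:P1} have the same set of feasible $\hat{x}$. Their objective functions are literally identical, both equal to $\sum_{i \in \mathcal{I}} \sum_{j \in \mathcal{J}_i} \lVert x_{ij} - \hat{x}_i \rVert$, and this function depends only on $\hat{x}$ (neither $\hat{c}$ nor the auxiliary variables $s,\lambda,z,w,\hat{c}^+,\hat{c}^-$ enter it). A minimization of a common objective over a common feasible set has a common optimal value and, projected onto the $\hat{x}$-coordinates, a common optimal set; hence the $\hat{x}$-part of the optimal set of \eqref{eqn:P1} equals $\widehat{\mathcal{X}}^*$. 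Under Condition \ref{asp:uniqueX}, $\widehat{\mathcal{X}}^*$ is a singleton (this is exactly the observation underlying Corollary \ref{cor:uniqueSolution}), so any optimal solution of \eqref{eqn:P1} delivers $\hat{x} = \hat{x}^*$, the unique optimal $\hat{x}$.

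For step (ii), specialize the analysis in the proof of Theorem \ref{thm:finiteSolutions} to $\widehat{\mathcal{X}}^* = \{\hat{x}^*\}$. Then \eqref{eqn:general_chat} collapses to $\widehat{\mathcal{C}}' = \bigcap_{i \in \mathcal{I}} \mathrm{cone}(\{-a_{it}\}_{t \in \mathcal{T}(\hat{x}^*_i)}) \setminus \{0\}$, and since $0$ belongs to every polyhedral cone, $\widehat{\mathcal{C}} = \widehat{\mathcal{C}}' \cup \{0\} = \bigcap_{i \in \mathcal{I}} \mathrm{cone}(\{-a_{it}\}_{t \in \mathcal{T}(\hat{x}^*_i)})$. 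Consequently \eqref{eqn:IOP} is precisely \eqref{eqn:IOP_outerMin} with $\hat{x} = \hat{x}^*$. Finally, \eqref{eqn:P2} is an exact reformulation of \eqref{eqn:IOP_outerMin}: its constraints, via multipliers $\gamma_{it} \geq 0$, state exactly that $\hat{c}$ is a conic combination of the $-a_{it}$, $t \in \mathcal{T}(\hat{x}_i)$, for each $i$, which is the defining description of $\bigcap_i \mathrm{cone}(\{-a_{it}\}_{t \in \mathcal{T}(\hat{x}_i)})$. Since the objective $\lVert \bar{c} - \hat{c} \rVert_2^2$ is strictly convex and the feasible region is a nonempty closed convex cone, \eqref{eqn:P2} has a unique minimizer $\tilde{c}(\hat{x}^*)$, which is therefore the unique optimal solution of \eqref{eqn:IOP}.

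Once Lemma \ref{lem:epsilon} and Theorem \ref{thm:finiteSolutions} are in hand the argument is short, so there is no deep obstacle; the point requiring the most care is step (i), where one must keep the distinction between \emph{feasible} and \emph{optimal} $\hat{x}$ and be explicit that the extra constraints of \eqref{eqn:P1} (the normalization \eqref{eqn:pnorm_const1}--\eqref{eqn:pnorm_const2} and the vertex constraint \eqref{eqn:Vertex_const}) restrict only the $\hat{c}$- and $z$-components, never shrinking the set of attainable $\hat{x}$ below that of \eqref{eqn:IOP_innerMin}. A secondary implementation remark worth flagging is that $\mathcal{T}(\hat{x}^*_i)$ in \eqref{eqn:P2} should be read off directly from $\hat{x}^*$ as $\{k : a_{ik}^{\top}\hat{x}^*_i = b_{ik}\}$ rather than from the returned binary vector $z_i$, since in degenerate cases $z_i$ need not flag every active constraint, and it is this direct reading that makes the reduction to \eqref{eqn:IOP_outerMin} exact.
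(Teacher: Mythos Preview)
Your proposal is correct and follows essentially the same approach as the paper: the paper's own proof is a one-line appeal to Corollary~\ref{cor:uniqueSolution} and Lemma~\ref{lem:epsilon}, and your two steps are precisely an explicit unpacking of how those two results combine (Lemma~\ref{lem:epsilon} for step~(i), the specialization of Theorem~\ref{thm:finiteSolutions} underlying Corollary~\ref{cor:uniqueSolution} for step~(ii)). Your added care about reading $\mathcal{T}(\hat{x}^*_i)$ directly from $\hat{x}^*$ rather than from the returned $z_i$ is a sound implementation remark that goes beyond what the paper states.
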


% \begin{proof}
% Proposition \ref{prp:TwoPhase} directly follows from Corollary \ref{cor:uniqueSolution} and Lemma \ref{lem:epsilon}.
% \end{proof}

In the general case where Condition \ref{asp:uniqueX} does not necessarily hold (or where we do not know in advance that it holds), the complete set $\widehat{\mathcal{X}}^*$ can be recovered by re-solving \eqref{eqn:P1} with added integer cuts of the following form until the objective function value changes:
\begin{equation}
\label{eqn:IntegerCut}
    \sum_{i \in \mathcal{I}} \sum_{t \in \mathcal{T}(\hat{x}_i)} z_{it} \leq n|\mathcal{I}| - 1.
\end{equation}
Since the set of active constraints at a particular $\hat{x}_i$ is given by the values of the binary variables $z_i$, we use \eqref{eqn:IntegerCut} to impose the condition that for a different optimal solution to exist, at least one of the $\hat{x}_i$ has to induce a different set of active constraints. A two-phase algorithm incorporating integer cuts to obtain the complete set of optimal solutions to \eqref{eqn:IOP} as described in the proof of Theorem \ref{thm:finiteSolutions} is shown in Algorithm \ref{alg:Two-phase}.

\begin{algorithm}
\begin{algorithmic}[1]
    \Statex \textsc{Phase 1:}
    \State solve \eqref{eqn:P1}, obtain $\hat{x}^*$, $r^* \gets \sum_{i \in \mathcal{I}} \sum_{j \in \mathcal{J}_{i}} \lVert x_{ij} - \hat{x}^*_{i} \rVert$
    \State initialize: $k \gets 1$, $r^1 \gets r^*$, $\hat{x}^1 \gets \hat{x}^*$
    \While{$r^k = r^*$}
        \State add integer cut \eqref{eqn:IntegerCut} for $\hat{x}^k$ to \eqref{eqn:P1}
        \State solve \eqref{eqn:P1}, obtain $\hat{x}^*$, $\hat{x}^{k+1} \gets \hat{x}^*$, $r^{k+1} \gets \sum_{i \in \mathcal{I}} \sum_{j \in \mathcal{J}_{i}} \lVert x_{ij} - \hat{x}^{k+1}_{i} \rVert$
        \State $k \gets k+1$
    \EndWhile
    \Statex \textsc{Phase 2:}
    \State $K \gets k-1$
    \ForAll{$k = 1, \dots, K$}
        \State solve \eqref{eqn:P2} with $\hat{x} = \hat{x}^k$, obtain $\hat{c}^*$,  $\hat{c}^k \gets \hat{c}^*$, $v^k \gets \lVert \bar{c} - \hat{c}^k \rVert_2^2$
    \EndFor
    \State \Return all $\hat{c}^k$ for which $v^k = \min_{k' = 1,\dots,K} v^{k'}$
    \caption{Two-phase algorithm for solving \eqref{eqn:IOP}.}
    \label{alg:Two-phase}
\end{algorithmic}
\end{algorithm}

% {\color{Maroon}
% \begin{remark}
% We compare our approach with that of \cite{Shahmoradi2019} where data is denoised by choosing the projection that maximizes the number of active constraints. In their proposed method, outliers are removed from the data by considering samples within a specified quantile statistic ($\theta$) of the optimality error. Their method expects some domain knowledge to estimate the expected level of noise and assign an appropriate value of $\theta$ and total optimality error ($E$). As demonstrated in their work, if the hyperparameters $\theta$ and $E$ are not chosen properly, the solution set may exclude the true cost vector $c$. In contrast, our approach, by assuming that an optimal solution to \eqref{eqn:FOP} is one of the vertices of the feasible region, eliminates the need of any hyperparameters and considers the most conservative (largest) $\widehat{\mathcal{C}}$ to choose a $\hat{c}$ from. It relies on a reference cost vector $\bar{c}$, which reflects the user's prior belief, to act as an anchor to effectively navigate the larger search space and recover a good cost estimate.
% \end{remark}
% }
\pgfplotsset{compat=1.15}
\usetikzlibrary{arrows}

\begin{remark} \label{rem:high_noise}
Problems \eqref{eqn:P1} and \eqref{eqn:P2} do not include any additional constraints on the values of $c$ (and therefore $\hat{c}$) apart from it being a nonzero vector in \eqref{eqn:P1}. In practice, it is likely that we have additional information about the true cost vector. For example, if the cost coefficients to be estimated represent costs of items, one can safely assume them to be strictly positive. Such information can be directly incorporated into \eqref{eqn:P1} and \eqref{eqn:P2} in the form of additional constraints, which can further restrict the admissible set and help obtain a reasonable cost estimate. This can be especially useful in situations where the observed data has a high variance or the vertices of the polyhedron are very close to each other. We illustrate this point through an example in {the online supplement, section C}. 
\end{remark}

\begin{remark} \label{rem:diff_norm}
So far, we have defined the loss function, i.e. the objective function of \eqref{eqn:P1}, to be the sum of a general norm of the residuals. Typically, a $p$-norm is used. While the 2-norm seems to work well in most cases, ideally, the norm should be chosen based on the type of noise in the data. Specifically, if the observations are known to be prone to outliers, the 1-norm can be used as it is known to be robust against outliers; whereas, if the data is highly accurate and the hypothesis of a linear objective function is being tested, one might want to use the $\infty$-norm to minimize the worst-case residuals.
\end{remark}

\section{A Sequential Decomposition Algorithm for \eqref{eqn:P1}} \label{sec:DecomAlg}

The phase-1 problem \eqref{eqn:P1} is an MILP or MINLP (depending on the choice of loss function) whose size increases with the number of experiments, inducing computational challenges when the dataset is large. Therefore, in the following, under the assumption that Condition \ref{asp:uniqueX} holds everywhere, we present an exact decomposition algorithm that can substantially reduce the computation time in large instances.

Notice that \eqref{eqn:P1} has a clear decomposable structure. Specifically, $\hat{c}$ act as linking variables such that with fixed $\hat{c}$, \eqref{eqn:P1} decomposes into $|\mathcal{I}|$ independent subproblems, one for each experiment. However, due to the nonconvex nature of the subproblems, traditional Benders-type decomposition methods cannot be directly applied to solve the problem to provable optimality. Instead, we develop a decomposition method that is more akin to Lagrangean decomposition but exploits the structure of the problem such that the exact optimal solution can be obtained. We start by presenting some properties of \eqref{eqn:P1} that form the basis of our solution algorithm.

\begin{lemma}
\label{lem:valLB}
Given a set of experiments $\{1, \ldots, N\}$, let $\eqref{eqn:P1}_i$ with $i \leq N$ be an instance of \eqref{eqn:P1} with $\mathcal{I} = \{ i \}$ and let its optimal value be $\bar{r}_i^*$. If $r^*$ denotes the optimal value of \eqref{eqn:P1} with $\mathcal{I} = \{ 1, \dots, N\}$, then $\sum\limits_{i=1}^N \bar{r}_i^* \leq r^*$.
\end{lemma}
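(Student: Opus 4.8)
The inequality is a ``restriction'' argument: I will take an optimal solution of the full problem \eqref{eqn:P1} with $\mathcal{I} = \{1,\dots,N\}$, project it onto the variables associated with a single experiment $i$, show that this projection is feasible for $\eqref{eqn:P1}_i$, and then compare objective values term by term. First I would fix an optimal solution $(\hat{c}, \{\hat{x}_i\}, \{s_i\}, \{\lambda_i\}, \{z_i\}, w, \hat{c}^+, \hat{c}^-)$ of \eqref{eqn:P1} over all $N$ experiments, whose objective value is $r^* = \sum_{i=1}^N \sum_{j \in \mathcal{J}_i} \lVert x_{ij} - \hat{x}_i \rVert$. (Existence of such a minimizer follows from the discussion in the proof of Theorem~\ref{thm:finiteSolutions}: the feasible $\hat{x}$ range over a finite set, and for each fixed feasible $\hat{x}$ the remaining constraints on $\hat{c}$ are those of a nonempty polyhedron intersected with $\lVert \hat{c}\rVert_1 = 1$; alternatively one argues directly with infima and the bound goes through unchanged.)

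\textbf{Feasibility of the restriction.} The key structural observation is that in \eqref{eqn:P1} the only variables shared across experiments are $\hat{c}$ and its auxiliary variables $\hat{c}^+, \hat{c}^-, w$; all remaining constraints — the KKT conditions \eqref{eqn:KKTstart}--\eqref{eqn:KKTend} and the vertex constraint \eqref{eqn:Vertex_const} — are indexed by a single $i$ and involve only $(\hat{x}_i, s_i, \lambda_i, z_i)$ together with $(A_i, b_i)$ and $\hat{c}$, while the normalization constraints \eqref{eqn:pnorm_const1}--\eqref{eqn:pnorm_const2} involve only $\hat{c}, \hat{c}^+, \hat{c}^-, w$. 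Hence, for any fixed $i \leq N$, the tuple $(\hat{c}, \hat{x}_i, s_i, \lambda_i, z_i, w, \hat{c}^+, \hat{c}^-)$ obtained by discarding the data of the other experiments satisfies every constraint of $\eqref{eqn:P1}_i$, i.e. it is feasible for the single-experiment instance. This step is essentially immediate; the only point that warrants a sentence is that the constraint $\lVert \hat{c}\rVert_1 = 1$ (which rules out $\hat{c}=0$) is inherited automatically because the very same $\hat{c}$, $\hat{c}^+$, $\hat{c}^-$, $w$ are reused, so $\eqref{eqn:pnorm_const1}$--$\eqref{eqn:pnorm_const2}$ continue to hold.

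\textbf{Comparing objectives.} Since the projected tuple is feasible for $\eqref{eqn:P1}_i$ and $\bar{r}_i^*$ is its optimal (minimum) value, we get $\bar{r}_i^* \leq \sum_{j \in \mathcal{J}_i} \lVert x_{ij} - \hat{x}_i \rVert$ for each $i \in \{1,\dots,N\}$. Summing over $i$ and recognizing the right-hand side as the objective of the full problem evaluated at the chosen optimal solution yields $\sum_{i=1}^N \bar{r}_i^* \leq \sum_{i=1}^N \sum_{j \in \mathcal{J}_i} \lVert x_{ij} - \hat{x}_i \rVert = r^*$, which is the claim.

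\textbf{Main obstacle.} There is no substantial obstacle here; the lemma is a decoupling bound. The only thing to be careful about is making the feasibility claim airtight — specifically, confirming that no constraint of \eqref{eqn:P1} couples two distinct experiments other than through $\hat{c}$, and that the $\ell_1$-normalization constraint on $\hat{c}$ survives the restriction (it does, trivially, since $\hat{c}$ is unchanged). A secondary bookkeeping point is attainment of the optimum in the full and single-experiment problems, which is handled exactly as in the proof of Theorem~\ref{thm:finiteSolutions} or circumvented by phrasing the argument with infima.
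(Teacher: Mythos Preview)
Your proof is correct. The argument --- restrict an optimal solution of the full problem to the variables of a single experiment, observe feasibility in $\eqref{eqn:P1}_i$, and sum the resulting inequalities --- is sound; in particular your identification of $\hat{c}$ (together with $\hat{c}^+,\hat{c}^-,w$) as the only coupling variables is exactly what makes the restriction feasible.

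The paper takes the dual viewpoint. It rewrites \eqref{eqn:P1} as an equivalent problem \eqref{eqn:P1'} in which $\hat{c}$ is duplicated into experiment-specific copies $\hat{c}_i$ linked by equality constraints $\hat{c}_i = \hat{c}_{i+1}$; dropping these linking constraints yields a relaxation that decomposes into the independent subproblems $\eqref{eqn:P1}_i$, whose optimal value $\sum_i \bar{r}_i^*$ is therefore a lower bound on $r^*$. So where you project a feasible point of the coupled problem down to each subproblem, the paper enlarges the feasible set by deleting the coupling constraint. The two arguments are logically equivalent for this separable structure; yours is a bit more direct and avoids introducing \eqref{eqn:P1'}, while the paper's framing makes the connection to Lagrangean-type decomposition explicit, which motivates the sequential algorithm that follows.
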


\begin{corollary} \label{cor: SequentialUpdate}
Let $\eqref{eqn:P1}_{[i]}$ be an instance of \eqref{eqn:P1} with $\mathcal{I} = \{1, \ldots, i\}$ and $\bar{r}^*_{[i]}$ be its optimal value. Then, $\bar{r}^*_{[i-1]} + \bar{r}^*_{i} \leq \bar{r}^*_{[i]}$.
\end{corollary}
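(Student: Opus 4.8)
The plan is to recycle the relaxation argument from the proof of Lemma \ref{lem:valLB}, only this time disaggregating the linking cost vector into two blocks rather than all $i$ of them. Concretely, apply the reformulation \eqref{eqn:P1'} to $\eqref{eqn:P1}_{[i]}$, i.e.\ with $\mathcal{I} = \{1,\dots,i\}$: this introduces disaggregated cost vectors $\hat{c}_1,\dots,\hat{c}_i$ tied together by the chain of equalities $\hat{c}_k = \hat{c}_{k+1}$ for $k = 1,\dots,i-1$, and by the argument in Lemma \ref{lem:valLB} it is equivalent to $\eqref{eqn:P1}_{[i]}$. Now form the relaxation obtained by deleting only the single linking constraint $\hat{c}_{i-1} = \hat{c}_i$, keeping all the others in place.

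Next, observe that this relaxation splits into two completely independent problems that share no variables. Removing the edge between positions $i-1$ and $i$ from the chain — which is a path on $\{1,\dots,i\}$ — disconnects the index block $\{1,\dots,i-1\}$ from the singleton $\{i\}$; since the KKT, vertex-selection \eqref{eqn:Vertex_const}, and $\ell_1$-normalization \eqref{eqn:pnorm_const1}--\eqref{eqn:pnorm_const2} constraints are all indexed per experiment, the block over $\{1,\dots,i-1\}$ — still carrying the chain $\hat{c}_k = \hat{c}_{k+1}$ for $k \le i-2$ — is exactly \eqref{eqn:P1'} for $\mathcal{I} = \{1,\dots,i-1\}$, hence (by the same equivalence) is $\eqref{eqn:P1}_{[i-1]}$, while the block over $\{i\}$ is exactly $\eqref{eqn:P1}_i$. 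The objective $\sum_{i}\sum_{j} \lVert x_{ij} - \hat{x}_i \rVert$ is a sum over experiments and therefore separates along the same split, so the optimal value of the relaxation is $\bar{r}^*_{[i-1]} + \bar{r}^*_i$.

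Finally, conclude: since the relaxed problem is obtained from \eqref{eqn:P1'} (equivalently $\eqref{eqn:P1}_{[i]}$) by deleting a constraint without changing the objective, its optimal value is a lower bound on $\bar{r}^*_{[i]}$, which gives $\bar{r}^*_{[i-1]} + \bar{r}^*_i \le \bar{r}^*_{[i]}$.

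I expect no real obstacle, as the statement is essentially a corollary of, and structurally identical to, Lemma \ref{lem:valLB}. The only point needing a moment's care is checking that deleting the single link $\hat{c}_{i-1} = \hat{c}_i$ genuinely decouples the two index blocks, i.e.\ that no other constraint couples an experiment with index $\le i-1$ to experiment $i$; this is immediate from the per-experiment indexing in \eqref{eqn:P1'}. (An even more direct, self-contained alternative avoiding \eqref{eqn:P1'} altogether: take an optimal solution of $\eqref{eqn:P1}_{[i]}$; its restriction to experiments $1,\dots,i-1$ is feasible for $\eqref{eqn:P1}_{[i-1]}$ and its restriction to experiment $i$ is feasible for $\eqref{eqn:P1}_i$, while the objective value of $\eqref{eqn:P1}_{[i]}$ splits additively across these two parts, yielding the claim.)
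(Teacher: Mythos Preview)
Your proposal is correct and is essentially the same approach as the paper's: the paper's proof is the single line ``Corollary \ref{cor: SequentialUpdate} follows from Lemma \ref{lem:valLB},'' and what you have written is precisely the spelled-out version of that --- re-running the relaxation argument of Lemma \ref{lem:valLB} with the linking constraints severed into two blocks rather than $i$ singletons. Your parenthetical restriction-of-an-optimal-solution argument is also valid and arguably cleaner.
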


% \begin{proof}
% Corollary \ref{cor: SequentialUpdate} follows from Lemma \ref{lem:valLB}.
% \end{proof}

Our sequential decomposition approach for \eqref{eqn:P1} is a direct consequence of Corollary \ref{cor: SequentialUpdate}. We first provide an intuitive description of the method using the illustrative example shown in Figure \ref{fig:DecompAlg}, which involves three experiments. The main idea is to solve \eqref{eqn:P1} \textit{sequentially}, i.e. one experiment by one experiment, instead of directly solving the full-space problem considering all experiments. With only one experiment, \eqref{eqn:P1} reduces to the problem of projecting the noisy observations onto the vertex that minimizes the loss. When the level of noise is small, it is likely that this already results in the solution that is optimal for the full problem, as it is the case for experiments 1 and 2 in Figure \ref{fig:DecompAlg}. In experiment 3, however, the level of noise is so high that the projection yields the wrong vertex. A solution to the full problem involving all experiments has to provide a $\hat{c}$ that renders all $\hat{x}_i$ optimal, which implies that the resulting admissible set $\widehat{\mathcal{C}}$ must not only contain 0. As shown at the top right of Figure \ref{fig:DecompAlg}, while the true cost vector $c$ lies in the intersection of the two cones resulting from the correct projections, the intersection of all three cones including the one associated with the incorrect projection is $\{0\}$. When $\widehat{\mathcal{C}} = \{0\}$, we solve \eqref{eqn:P1} considering all experiments up to that point jointly in order to correct the projections such that the resulting $\widehat{\mathcal{C}}$ is a proper polyhedral cone (see bottom left of Figure \ref{fig:DecompAlg}). 

\begin{figure}[ht]
    \includegraphics[clip, trim=0 5.5cm 0 5cm, width=\textwidth]{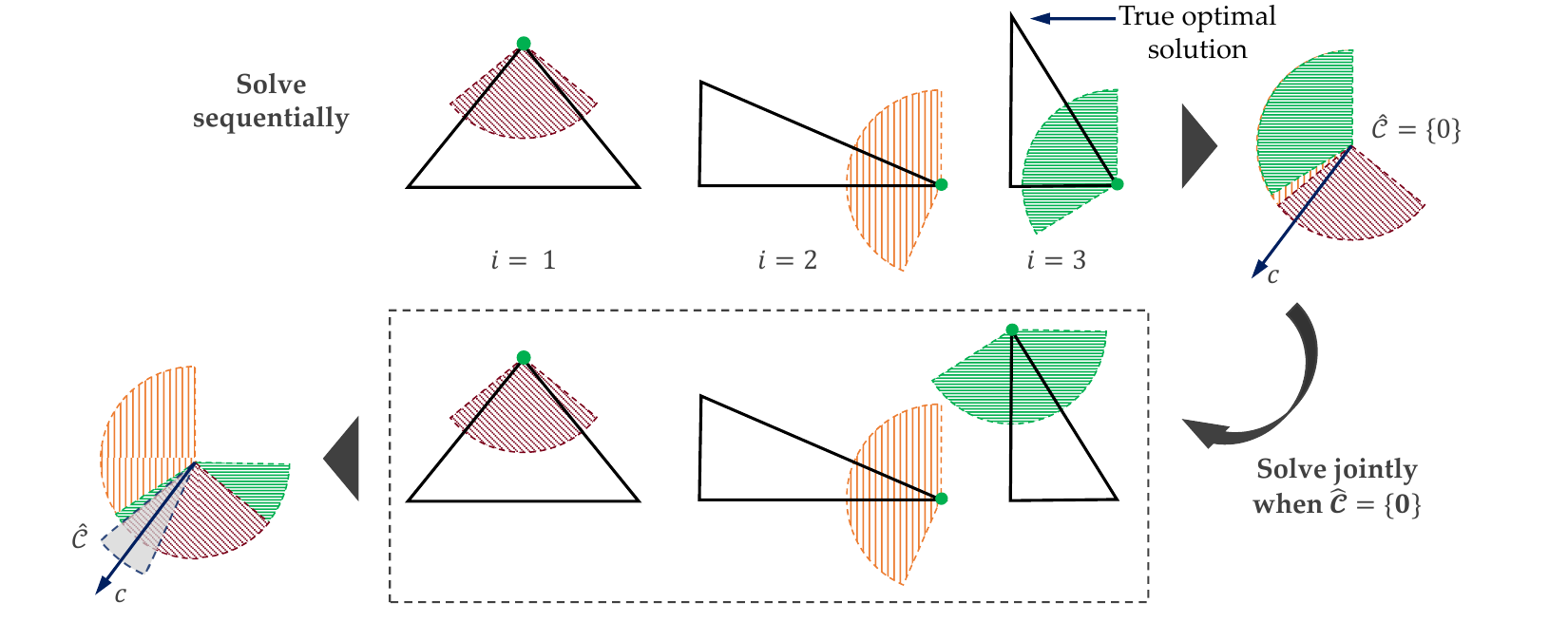}
    \centering
    \caption{{(Color online)} Depiction of the proposed decomposition algorithm. Note that only the vertex projections are shown (filled green circles), the corresponding noisy observations are omitted for the sake of clearer visualization.}
    \label{fig:DecompAlg}
\end{figure}

Detecting such infeasible projections is crucial for the proposed decomposition algorithm, and it turns out that this can be accomplished by solving a very efficient feasibility problem, which we establish in the following proposition. 

\begin{proposition} \label{prp:DecompValidity}
Let $(\hat{x}_1, \ldots, \hat{x}_{\ell-1})$ and $\hat{x}_{\ell}$ be feasible solutions to problems $\eqref{eqn:P1}_{[\ell-1]}$ and $\eqref{eqn:P1}_{\ell}$, respectively. Then, $(\hat{x}_1, \ldots, \hat{x}_{\ell})$ is a feasible solution to $\eqref{eqn:P1}_{[\ell]}$ if the following problem is feasible:
\begin{equation}
    \label{eqn:FP}
    \tag*{$(\mathrm{FP})_{\ell}$}
    \begin{aligned}
        \minimize\limits_{\hat{c}, \hat{c}^+, \hat{c}^-, \gamma, w} \quad & 0 \\
        \st \quad & \hat{c}  = \sum\limits_{t \in \mathcal{T}(\hat{x}_i)} \gamma_{it}(-a_{it}) \quad \forall \, i \in \{1, \ldots, \ell \} \\
        & {\hat{c}} = {\hat{c}}^+ - {\hat{c}}^-  \\
        & {\hat{c}}^+ \leq w  \\
        & {\hat{c}}^- \leq e-w \\
        & e^{\top} ({\hat{c}}^+ + {\hat{c}}^-) = 1   \\        
        & \gamma_{it} \geq 0 \quad \forall \, i \in \{1, \ldots, \ell \}, t \in \mathcal{T}(\hat{x}_i) \\
        & \hat{c} \in \mathbb{R}^n,\,{\hat{c}}^+ \in \mathbb{R}^n_+,\,{\hat{c}}^- \in \mathbb{R}^n_+ ,\, w \in \{0,1\}^n.
    \end{aligned}
\end{equation}
\end{proposition}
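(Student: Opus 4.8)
The plan is to show that feasibility of $(\mathrm{FP})_{\ell}$ supplies exactly the missing ``linking'' data needed to assemble a complete feasible point of $\eqref{eqn:P1}_{[\ell]}$ whose $\hat{x}$-component is the prescribed tuple $(\hat{x}_1,\dots,\hat{x}_{\ell})$. In effect, this runs the argument in the proof of Theorem \ref{thm:IntersectCones} in reverse: there, feasibility of $\hat{c}$ for a given vertex $\hat{x}_i$ was shown to force $\hat{c}\in\mathrm{cone}(\{-a_{it}\}_{t\in\mathcal{T}(\hat{x}_i)})$; here the constraints of $(\mathrm{FP})_{\ell}$ certify that a single, nonzero $\hat{c}$ lies in $\bigcap_{i=1}^{\ell}\mathrm{cone}(\{-a_{it}\}_{t\in\mathcal{T}(\hat{x}_i)})$, and the task is to recover from this a full solution (including the variables $\lambda_i$, $s_i$, $z_i$, $w$, $\hat{c}^{+}$, $\hat{c}^{-}$) that satisfies every constraint of \eqref{eqn:P1} with $\mathcal{I}=\{1,\dots,\ell\}$.

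First I would take any feasible solution $(\hat{c},\hat{c}^{+},\hat{c}^{-},\gamma,w)$ of $(\mathrm{FP})_{\ell}$. The blocks $\hat{c}=\hat{c}^{+}-\hat{c}^{-}$, $\hat{c}^{+}\le w$, $\hat{c}^{-}\le e-w$, $e^{\top}(\hat{c}^{+}+\hat{c}^{-})=1$, $w\in\{0,1\}^n$ are verbatim the length-normalization block \eqref{eqn:pnorm_const1}--\eqref{eqn:pnorm_const2} of \eqref{eqn:P1}, so they transfer directly; in particular they force $\lVert\hat{c}\rVert_1=1$, hence $\hat{c}\neq 0$. It then remains to build, for each experiment $i\in\{1,\dots,\ell\}$, dual/slack/indicator variables $(\lambda_i,s_i,z_i)$ matching the KKT block \eqref{eqn:KKTstart}--\eqref{eqn:KKTend} and the vertex constraint \eqref{eqn:Vertex_const}. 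The natural choice is $\lambda_{it}:=\gamma_{it}$ for $t\in\mathcal{T}(\hat{x}_i)$ and $\lambda_{it}:=0$ otherwise, $s_i:=b_i-A_i\hat{x}_i$, and $z_{it}:=1$ exactly for $t\in\mathcal{T}(\hat{x}_i)$ and $z_{it}:=0$ otherwise. With these, stationarity $\hat{c}+A_i^{\top}\lambda_i=0$ is precisely the $i$th equality of $(\mathrm{FP})_{\ell}$; primal feasibility $A_i\hat{x}_i+s_i=b_i$ holds by construction, and $s_i\ge 0$ because $\hat{x}_i$ is feasible for $\{x:A_ix\le b_i\}$; and complementarity holds index-wise, since $s_{it}=0$ wherever $z_{it}=1$ and $\lambda_{it}=0$ wherever $z_{it}=0$.

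The two points that need a short verification are the vertex constraint and the big-$M$ bounds, and these are where I expect the only real bookkeeping. For the vertex constraint, each $\hat{x}_i$ is the $\hat{x}$-part of a feasible solution of $\eqref{eqn:P1}_{[\ell-1]}$ (for $i<\ell$) or of $\eqref{eqn:P1}_{\ell}$ (for $i=\ell$), both of which already impose $e^{\top}z_i\ge n$ together with $s_i\le M(e-z_i)$; hence in that solution at least $n$ constraints of experiment $i$ are active at $\hat{x}_i$, so $\lvert\mathcal{T}(\hat{x}_i)\rvert\ge n$, and therefore $e^{\top}z_i=\lvert\mathcal{T}(\hat{x}_i)\rvert\ge n$ for the $z_i$ defined above. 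For the big-$M$ bounds, $\lambda_i\le Mz_i$ and $s_i\le M(e-z_i)$ hold for $M$ sufficiently large: the slacks $s_i$ are bounded because $\{x:A_ix\le b_i\}$ is compact, and the coefficients $\gamma_{it}$ may likewise be taken bounded (e.g.\ by invoking a Carathéodory representation of the fixed vector $\hat{c}$, or simply because ``$M$ sufficiently large'' is the standing assumption under which \eqref{eqn:P1} is posed). Assembling $(\hat{c},\{\hat{x}_i,s_i,\lambda_i,z_i\}_{i=1}^{\ell},w,\hat{c}^{+},\hat{c}^{-})$ then exhibits a feasible point of $\eqref{eqn:P1}_{[\ell]}$ with $\hat{x}$-component $(\hat{x}_1,\dots,\hat{x}_{\ell})$, which is exactly the claim; every remaining step is a direct transcription of constraints.
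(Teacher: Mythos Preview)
Your proof is correct and follows essentially the same approach as the paper's, which simply observes that feasibility of $(\mathrm{FP})_{\ell}$ certifies a common $\hat{c}$ with $\lVert\hat{c}\rVert_1=1$ lying in $\bigcap_{i=1}^{\ell}\mathrm{cone}(\{-a_{it}\}_{t\in\mathcal{T}(\hat{x}_i)})$, hence feasible for both $\eqref{eqn:P1}_{[\ell-1]}$ and $\eqref{eqn:P1}_{\ell}$, and therefore for $\eqref{eqn:P1}_{[\ell]}$. Your explicit construction of $(\lambda_i,s_i,z_i)$ and the verification of the vertex and big-$M$ constraints spell out the details the paper leaves implicit.
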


Note that \ref{eqn:FP} is a feasibility problem (as opposed to an optimization problem), and therefore, the objective function is arbitrarily set to a constant 0. 
% \begin{proof}
% By construction, a feasible \ref{eqn:FP} implies that $\{\hat{x}_1, \ldots, \hat{x}_{\ell} \}$ are such that there exists a $\hat{c}$ in $\bigcap\limits_{i \in \mathcal{I}} \mathrm{cone}\left(\{-a_{it}\}_{t \in \mathcal{T}(\hat{x}_{i})}\right)$ with $\lVert \hat{c} \rVert_1 = 1$. This indicates that for the given solutions to $\eqref{eqn:P1}_{[\ell-1]}$ and $\eqref{eqn:P1}_{\ell}$, one can find a common $\hat{c}$ that is feasible in both problems, which in turn implies that $(\hat{x}_1, \ldots, \hat{x}_{\ell})$ is feasible in $\eqref{eqn:P1}_{[\ell]}$.
% \end{proof}

\begin{corollary}
Let $(\hat{x}_1, \ldots, \hat{x}_{\ell-1})$ and $\hat{x}_{\ell}$ be optimal solutions to problems $\eqref{eqn:P1}_{[\ell-1]}$ and $\eqref{eqn:P1}_{\ell}$, respectively. Then, $(\hat{x}_1, \ldots, \hat{x}_{\ell})$ is an optimal solution to $\eqref{eqn:P1}_{[\ell]}$ if \ref{eqn:FP} is feasible.
\end{corollary}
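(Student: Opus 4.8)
The plan is to sandwich the optimal value of $\eqref{eqn:P1}_{[\ell]}$ between the feasibility guarantee already established and the lower bound from the sequential update inequality. First I would invoke Proposition~\ref{prp:DecompValidity}: since $(\hat{x}_1,\ldots,\hat{x}_{\ell-1})$ is feasible for $\eqref{eqn:P1}_{[\ell-1]}$, $\hat{x}_{\ell}$ is feasible for $\eqref{eqn:P1}_{\ell}$, and \ref{eqn:FP} is feasible by hypothesis, the concatenation $(\hat{x}_1,\ldots,\hat{x}_{\ell})$ is a feasible solution to $\eqref{eqn:P1}_{[\ell]}$; in particular it can be completed to a full feasible tuple, with the $\hat{c}$ component (and its $\gamma, w$ certificate) supplied by the solution of \ref{eqn:FP}. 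Hence the objective value attained at this point is an upper bound on $\bar{r}^*_{[\ell]}$.

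Next I would compute that objective value using separability of the loss. The loss function of \eqref{eqn:P1} is $\sum_{i \in \mathcal{I}} \sum_{j \in \mathcal{J}_i} \lVert x_{ij} - \hat{x}_i \rVert$, which splits across experiments, so the value attained by $(\hat{x}_1,\ldots,\hat{x}_{\ell})$ in $\eqref{eqn:P1}_{[\ell]}$ equals $\sum_{i=1}^{\ell-1} \sum_{j \in \mathcal{J}_i} \lVert x_{ij} - \hat{x}_i \rVert + \sum_{j \in \mathcal{J}_{\ell}} \lVert x_{\ell j} - \hat{x}_{\ell} \rVert$. Because $(\hat{x}_1,\ldots,\hat{x}_{\ell-1})$ is optimal for $\eqref{eqn:P1}_{[\ell-1]}$, the first sum equals $\bar{r}^*_{[\ell-1]}$, and because $\hat{x}_{\ell}$ is optimal for $\eqref{eqn:P1}_{\ell}$, the second sum equals $\bar{r}^*_{\ell}$. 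Therefore the feasible point $(\hat{x}_1,\ldots,\hat{x}_{\ell})$ attains the value $\bar{r}^*_{[\ell-1]} + \bar{r}^*_{\ell}$ in $\eqref{eqn:P1}_{[\ell]}$, which yields $\bar{r}^*_{[\ell]} \leq \bar{r}^*_{[\ell-1]} + \bar{r}^*_{\ell}$.

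Finally I would close the argument with Corollary~\ref{cor: SequentialUpdate}, which gives the reverse inequality $\bar{r}^*_{[\ell-1]} + \bar{r}^*_{\ell} \leq \bar{r}^*_{[\ell]}$. Combining the two forces $\bar{r}^*_{[\ell]} = \bar{r}^*_{[\ell-1]} + \bar{r}^*_{\ell}$, so the feasible solution $(\hat{x}_1,\ldots,\hat{x}_{\ell})$ attains the optimal value of $\eqref{eqn:P1}_{[\ell]}$ and is thus optimal. I do not expect a genuine obstacle here — the result is essentially a bookkeeping combination of Proposition~\ref{prp:DecompValidity}, the additive structure of the loss, and Corollary~\ref{cor: SequentialUpdate}. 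The only point deserving a moment's care is making precise, consistent with the usage in Section~\ref{sec:NewApproach}, that "optimal solution to $\eqref{eqn:P1}_{[\ell]}$" is a statement about the $\hat{x}$-component, so that it suffices to exhibit one completion of $(\hat{x}_1,\ldots,\hat{x}_{\ell})$ into a full feasible tuple attaining the optimal loss, which the feasibility of \ref{eqn:FP} delivers.
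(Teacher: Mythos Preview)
Your proposal is correct and follows essentially the same approach as the paper: use Proposition~\ref{prp:DecompValidity} to certify feasibility of $(\hat{x}_1,\ldots,\hat{x}_{\ell})$ in $\eqref{eqn:P1}_{[\ell]}$, observe that its objective value equals $\bar{r}^*_{[\ell-1]} + \bar{r}^*_{\ell}$ by additivity of the loss, and sandwich against the lower bound from Corollary~\ref{cor: SequentialUpdate}. The paper's proof is terser and leaves the separability computation implicit, but the logic is identical.
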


% \begin{proof}
% Due to Corollary \ref{cor: SequentialUpdate}, the sum of the optimal values of $\eqref{eqn:P1}_{[\ell-1]}$ and $\eqref{eqn:P1}_{\ell}$ provide a lower bound on the optimal value of $\eqref{eqn:P1}_{[\ell]}$, i.e. $\bar{r}^*_{[\ell-1]} + \bar{r}^*_{\ell} \leq \bar{r}^*_{[\ell]}$. If \ref{eqn:FP} is feasible, $(\hat{x}_1, \ldots, \hat{x}_{\ell})$ is feasible in $\eqref{eqn:P1}_{[\ell]}$ due to Proposition \ref{prp:DecompValidity} and therefore yields an upper bound, i.e. $\bar{r}^*_{[\ell-1]} + \bar{r}^*_{\ell} \geq \bar{r}^*_{[\ell]}$. As the lower and upper bounds coincide, it follows that $(\hat{x}_1, \ldots, \hat{x}_{\ell})$ is an optimal solution to $\eqref{eqn:P1}_{[\ell]}$.
% \end{proof}

Although \ref{eqn:FP} is an MILP, its number of binary variables does not change with the number of data points, which allows it to remain tractable for instances with many experiments. Therefore, in the algorithm, after every experiment $\ell$, we solve \ref{eqn:FP} to check the validity of the partial solution obtained up to that point for problem $\eqref{eqn:P1}_{[\ell]}$. If at some point, \ref{eqn:FP} is found infeasible, we solve the full problem $\eqref{eqn:P1}_{[\ell]}$. Here, it is important to note that since the solution obtained for $\eqref{eqn:P1}_{[\ell - 1]}$ in the previous iteration is guaranteed to be feasible for $\eqref{eqn:P1}_{[\ell]}$, it can be used to warm-start the solver, which is another crucial factor with regard to the computational efficiency of the algorithm. In practice, with low level of noise or a sufficiently large number of samples $|\mathcal{J}_i|$ for each experiment $i \in \mathcal{I}$, one can expect the single-experiment projection to be accurate most of the time. As a result, one may only need to solve a small number of problems involving multiple experiments. The pseudocode for the complete algorithm for solving \eqref{eqn:IOP} with this sequential decomposition approach is shown in Algorithm \ref{alg:Online Algorithm}. 

\begin{algorithm}
\begin{algorithmic}[1]
    \State initialize: $\widehat{\mathcal{C}} \gets \mathbb{R}^n$
    \ForAll{$ \ell \in \{1, \dots, N\}$}
        \State solve $\eqref{eqn:P1}_{\ell}$, obtain $\hat{x}_{\ell}^*$
        \State $\widehat{\mathcal{C}} \gets \widehat{\mathcal{C}} \, \bigcap \, \mathrm{cone}\left(\{-a_{\ell t}\}_{t \in \mathcal{T}(\hat{x}^*_{\ell})}\right)$
        \State solve \ref{eqn:FP}
        \If {\ref{eqn:FP} is infeasible}
            \State  Solve (P1)$_{[\ell]}$, warm-start with $(\hat{x}_1^*, \ldots, \hat{x}_{\ell - 1}^*)$, obtain $\hat{x}_{[\ell]}^* = (\hat{x}_1^*, \ldots, \hat{x}_{\ell}^*)$
            \State  $\widehat{\mathcal{C}} \gets \bigcap\limits_{i \in \{1, \ldots, \ell \}} \mathrm{cone}\left(\{-a_{it}\}_{t \in \mathcal{T}(\hat{x}^*_{i})}\right)$
        \EndIf
    \EndFor
        \State solve \eqref{eqn:P2}, obtain $\hat{c}^*$
    \caption{Decomposition-based algorithm for solving \eqref{eqn:IOP}}
    \label{alg:Online Algorithm}
\end{algorithmic}
\end{algorithm}

\paragraph{Online Inverse Optimization}
So far, we have assumed that all data are available prior to the inverse optimization process. However, in many situations, it is reasonable to expect the data from different experiments to become available at different time points. This has motivated the development of online learning frameworks for inverse optimization where the cost estimate is updated as new data arrive \citep{Barmann2017, Dong2018, Shahmoradi2019}. A naive extension of our two-phase framework for online learning can be to repeatedly solve \eqref{eqn:IOP} with the new data added to it. However, note that Algorithm \ref{alg:Online Algorithm} provides a much more efficient mechanism to address this problem, and can be adapted as a method for inverse optimization in an online setting. We include such an adaptation in {the online supplement, section B}.

\section{Online Adaptive Sampling} \label{sec:ASP}

In this section, we introduce a novel adaptive sampling approach that can be used to reduce the number of experiments required to obtain a good cost estimate in an online setting. Our method derives from the geometrical understanding of \eqref{eqn:IOP} discussed in the previous sections. As a motivating example, let us revisit the case with three experiments shown in Figure \ref{fig:DecompAlg}. Notice that the admissible set $\widehat{\mathcal{C}}$ obtained with the first two experiments is the same as the one obtained with all three experiments. The reason is that the cone associated with the projected solution from experiment 3 is a superset of the cone from experiment 1; hence, the intersection of cones does not change with the third experiment. This means that experiment 3 cannot help improve the cost estimate or, more precisely, our confidence in the cost estimate, which only increases if the size of $\widehat{\mathcal{C}}$ decreases. Therefore, the goal here is to use the current best cost estimate to design subsequent experiments in a way such that the size of $\widehat{\mathcal{C}}$ will likely be further reduced. In the following, we formulate the adaptive sampling problem as an optimization problem, and devise an efficient heuristic method to solve it.

\subsection{Mathematical Formulation}
Consider the point in an online inverse optimization process at which $\ell - 1$ experiments are completed and the problem is to choose input parameters $(A_{\ell}, b_{\ell}) \in \Pi$ for the $\ell$th experiment. Let $\mathcal{I}$ be the set of the first $\ell-1$ experiments, and let $\widehat{\mathcal{C}}$ be the admissible set obtained using these experiments. Suppose $\widehat{\mathcal{C}}_\ell$ denotes the (unknown) cone formed by the active constraints of the FOP with its feasible region represented by the polyhedron $\{\tilde{x}: A_{\ell} \tilde{x} \leq b_{\ell}\}$. The goal as illustrated in the above example is to choose $A_\ell $ and $b_\ell$ such that $\widehat{\mathcal{C}} \cap \widehat{\mathcal{C}}_\ell \subset \widehat{\mathcal{C}}$. While it is impossible to determine $\widehat{\mathcal{C}}_\ell $ without knowing $c$ exactly, we make use of a randomly sampled vector $\tilde{c}_{\ell-1}$ from $\widehat{\mathcal{C}}$ to predict which constraints may become active for given sets of potential input parameters. 

Assuming we have an accurate estimate of $\widehat{\mathcal{C}}_{\ell}$, we utilize the fact that $\widehat{\mathcal{C}} \cap \widehat{\mathcal{C}}_{\ell} \subset \widehat{\mathcal{C}}$ if and only if we can find a point that belongs to $\widehat{\mathcal{C}}$ but not to $\widehat{\mathcal{C}}_{\ell}$. As we illustrate in Figure \ref{fig:ConeInt}, the existence of such a point can be determined by obtaining the minimum-distance projections of all points in $\widehat{\mathcal{C}}$ on $\widehat{\mathcal{C}}_{\ell}$ and searching for one that yields a nonzero projection distance. In the figure, $\bar{y}$ denotes the minimum-distance projection of a point $y$ from $\widehat{\mathcal{C}}$ on $\widehat{\mathcal{C}}_{\ell}$. In case of Figure \ref{fig:coneoverlap}, all points $y \in \widehat{\mathcal{C}}$ are such that $\bar{y} = y$ and hence, the intersection of the two cones does not result in a reduction in the size of the admissible set. In Figure \ref{fig:noconeoverlap}, however, one can find a $y$ and its corresponding $\bar{y}$ with $\lVert y - \bar{y} \rVert > 0$ and therefore, the intersection of the two cones is a strict subset of $\widehat{\mathcal{C}}$. 

\begin{figure}[h!]
\centering
\subfloat[$\widehat{\mathcal{C}} = \widehat{\mathcal{C}} \cap \widehat{\mathcal{C}}_{\ell}$]{
    \label{fig:coneoverlap}
    \pgfplotsset{compat=1.15}
    \usetikzlibrary{arrows}
    \begin{tikzpicture}[line cap=round,line join=round,>=triangle 45,x=0.1cm,y=0.1cm]
        \begin{axis}[
            x=4cm,y=4cm,
            axis lines=none,
            ymajorgrids=false,
            xmajorgrids=false,
            xmin=-.10,
            xmax=1.5,
            ymin=-0.1,
            ymax=1.15,
            ]
            \clip(-0.1,-0.1) rectangle (4,4);
            \draw [->,line width=2pt,color=red, dashed] (0,0) -- (0,1);
            \draw [->,line width=2pt,color=red, dashed] (0,0) -- (1,0);
            \draw [->,line width=2pt,color=blue] (0,0) -- (0.5547,0.83205);
            \draw [->,line width=2pt,color=blue] (0,0) -- (0.995037,0.0995037);
            \draw [fill=black] (0.5,0.6) circle (1pt) node[right] {$y = \bar{y}$};
            % \filldraw[fill=cyan, draw=none, opacity = 0.15] (0,0) -- (0.995037,0.0995037) arc (6.2:56.7:1) -- (0,0);
            \filldraw[blue, draw=none, opacity = 0.2] (0,0) -- (0.995037,0.0995037) arc (6.2:56.7:1) -- (0,0);
            \filldraw[red, draw=none, opacity = 0.2] (0,0) -- (1,0) arc (0:90:1) -- (0,0);
            \node[coordinate, pin={[align=left,pin distance = 4mm]60:{$\widehat{\mathcal{C}}_{\ell}$}}]
                           at (0.4,0.87) {};
            \node[coordinate, pin={[align=left,pin distance = 4mm]30:{$\widehat{\mathcal{C}} = \widehat{\mathcal{C}} \cap \widehat{\mathcal{C}}_{\ell}$}}]
                           at (0.8,0.4) {};
        \end{axis}
    \end{tikzpicture}}
\subfloat[$\widehat{\mathcal{C}} \cap \widehat{\mathcal{C}}_{\ell} \subset \widehat{\mathcal{C}}$]{
    \label{fig:noconeoverlap}
    \begin{tikzpicture}[line cap=round,line join=round,>=triangle 45,x=0.1cm,y=0.1cm]
        \begin{axis}[
            x=4cm,y=4cm,
            axis lines=none,
            ymajorgrids=false,
            xmajorgrids=false,
            xmin=-.10,
            xmax=1.5,
            ymin=-0.1,
            ymax=1.15,
            ]
            \clip(-0.1,-0.1) rectangle (4,4);
            \draw [->,line width=2pt,color=red, dashed] (0,0) -- (0.8944,0.44);
            \draw [->,line width=2pt,color=red, dashed] (0,0) -- (1,0);
            \draw [->,line width=2pt,color=blue] (0,0) -- (0.5547,0.83205);
            \draw [->,line width=2pt,color=blue] (0,0) -- (0.99504,0.0995);
            \draw [fill=black] (0.5,0.6) circle (1pt) node[right] {$y$};
            \draw [fill=black] (0.6402,0.31496) circle (1pt) node[above] {$\bar{y}$};
            \draw [line width = 0.5, dotted, color = black] (0.5,0.6) --(0.6402,0.31496);
            % \filldraw[fill=cyan, draw=none, opacity = 0.15] (0,0) -- (0.995037,0.0995037) arc (6.2:26.5:1) -- (0,0);
            \filldraw[red, draw=none, opacity = 0.2] (0,0) -- (1,0) arc (0:26.1:1) -- (0,0) ;
            \filldraw[blue, draw=none, opacity = 0.2] (0,0) -- (0.995037,0.0995037) arc (6.2:56.7:1) -- (0,0);
            \node[coordinate, pin={[align=left,pin distance = 4mm]30:{$\widehat{\mathcal{C}}$}}]
                           at (0.7,0.6) {};
            \node[coordinate, pin={[align=left,pin distance = 4mm]0:{$\widehat{\mathcal{C}}_{\ell}$}}]
                           at (0.98,0.05) {};
            \node[coordinate, pin={[align=left,pin distance = 4mm]20:{$\widehat{\mathcal{C}} \cap \widehat{\mathcal{C}}_{\ell}$}}]
                           at (0.88,0.25) {};
        \end{axis}
    \end{tikzpicture}}
\caption{{(Color online)} The cone formed by the blue vectors represents $\widehat{\mathcal{C}}$ whereas the red dashed vectors form the cone $\widehat{\mathcal{C}}_{\ell}$. The dotted line in the second case shows the minimum 2-norm projection of $y$ on $\widehat{\mathcal{C}}_{\ell}$.}
\label{fig:ConeInt}
\end{figure}
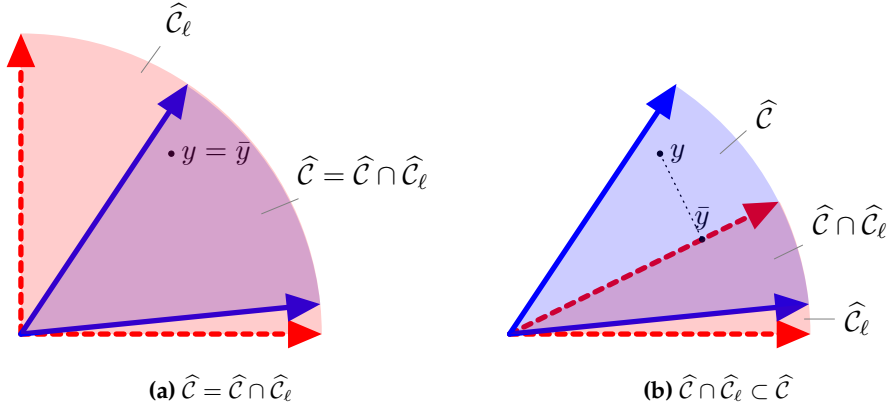

%With an estimate for $\widehat{\mathcal{C}}_{\ell}$ being available, our method is based on the fact that $\widehat{\mathcal{C}} \cap \widehat{\mathcal{C}}_{\ell} \subset \widehat{\mathcal{C}}$ if and only if there exists a point $y \in \widehat{\mathcal{C}}$ such that $y \notin \widehat{\mathcal{C}}_{\ell}$ . Further, we wish to reduce the size of the admissible set as much as possible. Therefore, we find $A_\ell $ and $b_\ell $ that results in a $\widehat{\mathcal{C}}_{\ell}$ for which the minimum projection distance from $y$ to $\widehat{\mathcal{C}}_{\ell}$ takes the maximum possible value. We formulate this design of experiment problem as the following optimization problem:

Given $\widehat{\mathcal{C}}$ and $\widehat{\mathcal{C}}_{\ell}$, we find the $y \in \widehat{\mathcal{C}}$ with the maximum minimum-projection distance to $\widehat{\mathcal{C}}_{\ell}$ and use this distance as a measure for the difference between $\widehat{\mathcal{C}}$ and $\widehat{\mathcal{C}} \cap \widehat{\mathcal{C}}_{\ell}$. Hence, as we wish to reduce the size of the admissible set as much as possible, we need to choose $A_{\ell}$ and $b_{\ell}$ for which this distance is the largest among all $(A_{\ell}, b_{\ell}) \in \Pi$. We formulate the problem of finding such $A_{\ell}$ and $b_{\ell}$ as the following optimization problem:
\begin{varsubequations}{ASP}
\label{eqn:ASP}
% \tag{ASP}
\begin{align}
    \maximize\limits_{y, \gamma, z, A_{\ell}, b_{\ell}, \hat{x}_{\ell}} \quad & \min\limits_{\eta, \bar{y}, \bar{\gamma}} \; \left\{ \eta : \eta = \lVert \; y - \bar{y} \; \rVert_1, \,  \bar{y} = \sum\limits_{k \in \mathcal{K}} \bar{\gamma}_k z_k a_{\ell k}, \, \bar{\gamma}_{k} \geq 0 \; \forall \,  k \in \mathcal{K} \right \} \label{eqn:ASPobj} \\
    \st \quad & y = \sum_{t \in \mathcal{T}(\hat{x}_i)} \gamma_{it}\left(-\frac{a_{it}}{\lVert a_{it} \rVert}_2\right) \quad \forall \, i \in \mathcal{I}  \label{eqn:ASPConst1} \\
    &  0 \leq  \gamma_{it} \leq \epsilon \quad \forall \, i \in \mathcal{I}, t \in \mathcal{T}(\hat{x}_i) \label{eqn:ASPConst2} \\
    & (A_{\ell}, b_{\ell}) \in \Pi  \label{eqn:ASPConst3} \\
    & z_k = \Bigg\{\begin{array}{lr}
        1,\;\; \text{if } k \in \mathcal{T}(\hat{x}_{\ell}) \; \text{where } \hat{x}_{\ell} \in \argmin\limits_{\tilde{x} \in \mathbb{R}^n} \left\lbrace \tilde{c}_{\ell-1}^{\top} \tilde{x}: A_{\ell} \tilde{x} \leq b_{\ell} \right\rbrace \\
        0, \;\; \text{otherwise }  \\
        \end{array} \quad \forall \, k \in \mathcal{K}, \label{eqn:ASPConst5}
\end{align}
\end{varsubequations}
where $\mathcal{K}$ is the set of constraints for $A_{\ell}x \leq b_{\ell}$. Constraints \eqref{eqn:ASPConst1} state that $y$ is a point in $\widehat{\mathcal{C}}$. Note that the vectors forming cone $\widehat{\mathcal{C}}_{i}$ for each $i \in \mathcal{I}$ have been normalized to ensure that the lengths of the vectors do not bias the value of $\eta$. The upper bound $\epsilon$ in \eqref{eqn:ASPConst2} ensures that the problem remains bounded. The value of $\epsilon$ can be chosen arbitrarily and does not affect the choice of optimal $A_\ell $ and $b_\ell $ for the problem. Constraint \eqref{eqn:ASPConst3} imposes the restriction that input parameters $A_\ell $ and $b_\ell $ must be chosen from the set $\Pi$. As stated in constraints \eqref{eqn:ASPConst5}, $z_k$ is a binary variable that equals $1$ if $\tilde{c}_{\ell-1}$ predicts that the $k{\mathrm{th}}$ constraint will be active. Finally, the objective of $\eqref{eqn:ASP}$ is to maximize the minimum $1$-norm distance between $y$ and $\bar{y}$, with $\bar{y}$ being constrained to lie in the cone $\widehat{\mathcal{C}}_{\ell}$. We denote the distance between $y$ and $\bar{y}$ by $\eta$. 

\subsection{A Heuristic Solution Algorithm for \eqref{eqn:ASP}} 

Problem $\eqref{eqn:ASP}$ is a bilevel optimization problem with lower-level problems embedded in the objective function \eqref{eqn:ASPobj} and constraints \eqref{eqn:ASPConst5}. Since these embedded problems are LPs, \eqref{eqn:ASP} can be reformulated into a single-level problem, similar to the bilevel problems considered in previous sections. We present two such single-level reformulations in {the online supplement, section D}. Both of them take the forms of nonconvex MINLPs that are only tractable for relatively small problems. Next, we outline a heuristic approach to solve this problem. 

Instead of searching through the entire set $\Pi$ for the best set of input parameters, the proposed algorithm, Algorithm \ref{alg:RandomAdaIO}, simplifies the problem by limiting the search space to $S$ randomly sampled sets of input parameters. It then solves $S$ instances of \eqref{eqn:ASP}, each corresponding to one of the parameter sets, to choose the one that results in the largest $\eta$. Here, since the number of possible candidates for $A_{\ell}$ and $b_{\ell}$ is finite with their values being explicitly known, binary vector $z$ for each of them can be calculated before \eqref{eqn:ASP} is solved. This eliminates the lower-level problems in \eqref{eqn:ASPConst5}. Furthermore, in this algorithm, since the $S$ instances of \eqref{eqn:ASP} are independent of each other, they can be solved in parallel. This enables the use of a large $S$ to find a heuristic solution as close to the global optimum of \eqref{eqn:ASP} as possible while still keeping the problem tractable. 

\begin{algorithm}
\begin{algorithmic}[1]
    \For{$i \in \{1, \ldots, S\}$}
        \State sample $(A_i, b_i) \in \Pi $
        \State compute $z_i$ such that $z_{ik} = \Bigg\{\begin{array}{lr}
        1, & \text{if } k \in \mathcal{T}(\hat{x}_i) \; \text{where } \hat{x}_i \in \argmin\limits_{\tilde{x} \in \mathbb{R}^n} \left\lbrace \tilde{c}_{\ell-1}^{\top} \tilde{x}: A_i \tilde{x} \leq b_i \right\rbrace \\
        0, & \text{otherwise }  \\
        \end{array} $
        \State solve \eqref{eqn:ASP} with $A_{\ell} = A_i$, $b_{\ell} = b_i$, and $z = z_i$, obtain $\eta_i$
    \EndFor
    \State \Return $A_i$, $b_i$ for which $\eta_i = \text{max}_{i' = \{1, \ldots, S\}} \eta_{i'}$ 
    \caption{A heuristic algorithm for solving \eqref{eqn:ASP}}
    \label{alg:RandomAdaIO}
\end{algorithmic}
\end{algorithm}

\begin{remark}
Problem \eqref{eqn:ASP} is valid regardless of the restrictions on the input parameters $A$ and $b$, i.e. the set $\Pi$. However, there is a difference in the way the size of $\widehat{\mathcal{C}}$ is reduced when only right-hand-side (RHS) parameters $b$ are allowed to be varied compared to when the constraint matrix $A$ can also be varied. We highlight this difference in detail in {the online supplement, section E}.
\end{remark}

\section{Computational Case Studies} \label{sec:CompSt}
In this section, we apply the proposed data-driven inverse linear optimization framework to two case studies, one addressing customer preference learning and the second related to cost estimation for multiperiod production planning. Using synthetic data, we compare the computational performances of Algorithms \ref{alg:Two-phase} and \ref{alg:Online Algorithm} and evaluate the impact of adaptive sampling. All model instances were implemented in Julia v1.3.0 \citep{Bezanson2017} using the modeling language JuMP v0.18.6 \citep{Dunning2017}. All instances of \eqref{eqn:P1} were solved with a 1-norm-based objective function, and Gurobi v9.0.2 \citep{Gurobi2014} was applied to solve the resulting MILPs. Nonconvex MINLPs were solved using BARON v19.12.7 \citep{Sahinidis1996}. The Julia code for all computational experiments presented in this section is available from the IJOC GitHub software repository at \url{https://github.com/INFORMSJoC/2020.0231} \citep{Code_Github}.

\subsection{Customer Preference Learning} \label{sec:CustPref}
We consider the problem of learning customers' preferences given their purchasing decisions. The FOP here is based on the premise that with a limited budget, customers will buy products that maximize their utility. Given the price $w_p$ of each product $p$ and a budget $b$, the customer is assumed to solve the following LP:
\begin{equation}
\label{eqn:CustPref}
    \begin{aligned}
        \maximize\limits_{x \in \mathbb{R}^n_+} \quad & \sum\limits_{p \in \mathcal{P}}u_px_p \\
        \st \quad & \sum\limits_{p \in \mathcal{P}}w_px_p \leq b \\
        & x_p \leq 1 \quad \forall \, p \in \mathcal{P},
    \end{aligned}
\end{equation}
where $\mathcal{P} = \{1, \ldots, n\}$ denotes the set of $n$ products available on the market. The goal is to estimate the unknown utility function coefficients $u$ by observing the changes in the customer's decisions $x$ in response to price fluctuations. Different variations of this IOP have previously been considered by \cite{Barmann2017} and \cite{Dong2018}. \cite{Barmann2017} consider learning the utility function with deterministic data. \cite{Dong2018} account for noise in the data but pose \eqref{eqn:CustPref} with a strongly concave utility function. Our design of this case study closely follows the scheme presented by \cite{Barmann2017}, but uses noisy data to learn the utility function coefficients. 

For each instance of the IOP, the training data are generated as follows. We first create an arbitrary utility vector $u \in \mathbb{R}^n$ by sampling its individual elements from the uniform distribution $\mathcal{U}(1, 1000)$, and normalize it to make its 1-norm equal to 1. We then sample a set of price vectors $w_i$, which are the input parameters for each $i \in \mathcal{I}$ such that $w_{ip} \sim \mathcal{U}(50, 150)$ for every $p \in \mathcal{P}$. The budget $b$ is set to $0.6 \sum\limits_{p \in \mathcal{P}} w_{1p}$ for all experiments. Next, keeping the utility vector the same, we solve these $|\mathcal{I}|$ instances of \eqref{eqn:CustPref} to obtain the optimal decisions $x^*_i$. We then generate the noisy datasets $\mathcal{J}_i$ for each $i \in \mathcal{I}$ by distorting the true optimal solution such that $x_{ij} = x^*_i + \gamma$, where $\gamma \sim \mathcal{N}(0, \sigma^2 \mathbb{I})$. 

In this study, we consider FOPs of varying dimensionality $n$ but limit the number of experiments to 100 in all cases. We also consider datasets with varying levels of noise by changing the value of $\sigma$. Once $n$ and $\sigma$ are fixed, the size of the sets $\mathcal{J}_i$ is kept the same for all $i \in \mathcal{I}$, i.e. $|\mathcal{J}_i| = J$ for all $i \in \mathcal{I}$. A specific case is hence represented by $n$, $\sigma$, and $J$, and we solve ten random instances of each case (generated using the scheme described above). Following Remark \ref{rem:high_noise}, to increase the robustness of our two-phase algorithm against such a large number of polyhedral geometries and different levels of noise, we introduce an additional set of constraints in the phase-1 problem enforcing $\hat{u}_p \geq 10^{-6}$ for all $p \in \mathcal{P}$. 

\subsubsection{Computational Performance}

The results comparing the computational performance of Algorithms \ref{alg:Two-phase} and \ref{alg:Online Algorithm} are summarized in Table \ref{tab:CustPref}. All instances were solved with a time limit of 7,200 s utilizing 24 cores on the Mesabi cluster of the Minnesota Supercomputing Institute (MSI). For each of the algorithms, the table lists the number of instances (out of ten) that were solved to optimality. In all instances that could not be solved with Algorithm \ref{alg:Two-phase}, we find that the solver could not find even a single feasible solution within the given time; hence the optimality gaps are not reported. For the decomposition-based algorithm, an unsolved instance is one where not all the 100 experiments could be processed in the given time but it still yields an estimate for the cost coefficients. Nonetheless, for the comparison of the two solution methods, we consider a run resulting in a partial solution as a failed run. Table \ref{tab:CustPref} shows the median, maximum, and minimum computation times for both algorithms. For Algorithm \ref{alg:Two-phase}, it is the time required to solve an instance without implementing the integer cuts in Phase 1. We find that adding the integer cuts to identify multiple optimal solutions was only required in 2 of the 180 $(\sim 1\%)$ instances considered in this study, confirming our assertion that a violation of Condition \ref{asp:uniqueX} is rare. Lastly, note that Algorithm \ref{alg:Online Algorithm} has an additional column labeled ``median $\#$ of re-solves" that shows the median value of the number of times the feasibility problem \ref{eqn:FP} was found infeasible, which then required the problem involving all experiments up to that point to be re-solved. 

From the data in Table \ref{tab:CustPref}, one can observe that the two algorithms solve the IOP in comparable times when the level of noise is low. However, as the size of the problem or the level of noise increases, Algorithm \ref{alg:Online Algorithm} starts outperforming Algorithm \ref{alg:Two-phase}. The difference in their performance is especially apparent from the numbers of instances solved and the maximum computation times. For example, in the arguably most difficult case with $n=100$, $\sigma=0.1$, and $J=20$, Algorithm 1 was not able to solve any of the given instances, while Algorithm \ref{alg:Online Algorithm} solved eight out of ten. Irrespective of the solution algorithm, increasing $J$ has a seemingly counterintuitive effect of making the problem easier to solve. This is because increasing the number of samples for an experiment merely increases the number of terms in the objective function of \eqref{eqn:P1} while making it easier for the problem to find the correct vertex to project onto. For Algorithm \ref{alg:Online Algorithm}, one can also see that increasing $J$ reduces the likelihood of incorrect projections when processing the data for individual experiments separately. This is especially helpful in situations where computing infrastructure is a limitation as large amount of data has a significantly higher memory requirement. Overall, the results indicate that Algorithm \ref{alg:Online Algorithm} dominates over Algorithm \ref{alg:Two-phase} when it comes to solving more difficult instances of \eqref{eqn:IOP} with data of high dimensionality and level of noise.

% \begin{landscape}
\begin{table}[h!]
\resizebox{\textwidth}{!}{%
\centering
\begin{tabular}{@{}cccccccccccc@{}}
\toprule
\multirow{3}{*}[-2em]{$n$} &
  \multirow{3}{*}[-2em]{$\sigma$} &
  \multirow{3}{*}[-2em]{$J$} &
  \multicolumn{4}{c}{Algorithm \ref{alg:Two-phase}} &
  \multicolumn{5}{c}{Algorithm \ref{alg:Online Algorithm}} \\ \cmidrule(lr){4-7} \cmidrule(lr){8-12} 
 &
   &
   &
  \multirow{2}{2cm}[-0.2em]{\centering $\#$ of instances solved} &
  \multicolumn{3}{c}{computation time (s)} &
  \multirow{2}{2cm}[-0.2em]{\centering $\#$ of instances solved} &
  \multicolumn{3}{c}{computation time (s)} &
  \multirow{2}{2cm}[-0.2em]{\centering median $\#$ of re-solves} \\         \addlinespace[3pt]
 \cmidrule(lr){5-7} \cmidrule(lr){9-11}
                     &                       &     &    & median & max & min &    & median & max & min &      \\ \addlinespace[3pt] \midrule
\multirow{6}{*}{25} & \multirow{2}{*}{0.01} & 5   & 10 & 33           & 155       & 6          & 10 & 277         & 317       & 253       & 2    \\
                     &                       & 250 & 10 & 73           & 90        & 67        & 10 & 266           & 271       & 264       & 0      \\
                     & \multirow{2}{*}{0.05} & 10  & 10 & 176          & 264       & 54        & 10 & 283          & 789       & 262       & 6    \\
                     &                       & 250 & 10 & 88           & 965       & 74        & 10 & 268          & 347       & 264       & 0    \\
                     & \multirow{2}{*}{0.1}  & 20  & 10 & 306          & 680       & 22        & 10 & 315          & 499       & 273       & 9.5  \\
                     &                       & 250 & 10 & 180          & 5,563      & 79        & 10 & 298             & 687       & 265       & 1 \\ \midrule
\multirow{6}{*}{50} & \multirow{2}{*}{0.01} & 5   & 10 & 88           & 931       & 7         & 10 & 484           & 638       & 331       & 1.5  \\
                     &                       & 250 & 10 & 220          & 1,724      & 200       & 10 & 423           & 717       & 414        & 0 \\
                     & \multirow{2}{*}{0.05} & 10  & 10 & 1,037         & 5,874      & 430       & 10 & 673           & 6,897      & 409      & 7.5  \\
                     &                       & 250 & 7  & 314         & 439       & 215       & 10 & 626          & 1,617      & 417       & 1.5  \\
                     & \multirow{2}{*}{0.1}  & 20  & 8  & 2,707            & 7,081      & 715       & 10 & 817          & 1,841      & 504       & 11.5 \\
                     &                       & 250 & 8  & 875          & 6,035      & 265       & 10 & 705          & 1,651      & 510       & 2 \\ \midrule
\multirow{6}{*}{100} & \multirow{2}{*}{0.01} & 5   & 9  & 407          & 488       & 29        & 10 & 428          & 1,042     & 362       & 2   \\
                     &                       & 250 & 10 & 489          & 2,267      & 419       & 10 & 479          & 1,648      & 451       & 0     \\
                     & \multirow{2}{*}{0.05} & 10  & 3  & 2,707         & 2,851      & 1,902       & 9  & 1,021         & 2,032      & 586       & 9    \\
                     &                       & 250 & 9  & 2,232         & 6,610      & 526       & 10 & 865           & 5,537      & 455       & 1.5   \\
                     & \multirow{2}{*}{0.1}  & 20  & 0  & n/a & n/a & n/a                        & 8  & 1,656         & 2,933      & 935       & 12    \\
                     &                       & 250 & 4  & 1,089         & 2,561      & 542       & 8  & 1,337            & 6,890      & 565       & 4.5 \\ \bottomrule
\end{tabular}}
\caption{\label{tab:CustPref}Comparison of computational performances of Algorithms \ref{alg:Two-phase} and \ref{alg:Online Algorithm} on an IOP based on random instances of \eqref{eqn:CustPref}. Reported computation times only consider the instances that were solved to optimality.}
\end{table}
% \end{landscape}

\subsubsection{Prediction Error} \label{sec:CustPE}
Phase 2 of our two-phase algorithm requires a reference $\bar{u}$  to yield an estimate $\hat{u}$. Ideally, this reference is based on some prior intuition about the unknown utility function, but here we obtain $\hat{u}$ using a randomly generated $\bar{u}$. The goal is to test the capability of this estimate in generating reliable predictions on unseen datasets in the worst-case scenario where no prior information about the missing parameters is available. To perform this assessment, along with every instance of training data, we also generate a test dataset of 100 experiments. This test data consists of $(w, x^*)$ pairs, where $w$-values are generated in the same manner as for the training data, and $x^*$ are the corresponding true optimal solutions obtained using the same $u$ as the one used to generate the noisy training data. Once a $\hat{u}$ has been found, we use it to solve the problems in the test dataset and evaluate the prediction error as the fraction of incorrect predictions. 

We show the prediction error results for a few selected cases in Figure \ref{fig:CustLR}. Here, instead of using just the final estimate obtained with all the $|\mathcal{I}|$ experiments, we make use of the online algorithm (Algorithm 3 in {the online supplement, section B}) to show how the prediction error evolves with the addition of new experiments to the training set. As expected, the prediction error generally decreases with the number of experiments, and problems of higher dimensionality require more experiments to reach the same prediction accuracy. Also, the figures show an additional benefit of a large $J$ apart from making \eqref{eqn:P1} easier to solve. In Figures \ref{fig:CustLRa} and \ref{fig:CustLRb}, where $J$ is kept small, the curves are ``spiky" showing a local increase in prediction error. Recall that we solve \ref{eqn:FP} after every experiment $\ell$ to confirm if the the solution obtained with the first $\ell$ experiments $(\ell \leq |\mathcal{I}|)$ still holds for $\eqref{eqn:P1}_{[\ell]}$. Therefore, these local spikes are a consequence of insufficient sampling which results in the violation of our primary assumption that the vertex with minimum loss on the given data is the ``correct" vertex. As seen in Figure \ref{fig:CustLRc}, once an adequate number of samples are used, the likelihood of observing these peaks reduces significantly. We expect that in situations where the quality of data is uncertain and sampling is limited, adding a pre-processing step to remove large outliers from the dataset can be a way to prevent the model from making wrong estimates.

\begin{figure}[h!]
\centering
\subfloat[$\sigma = 0.01, J = 5$]{
    \label{fig:CustLRa}
    \includegraphics[width=0.33\textwidth]{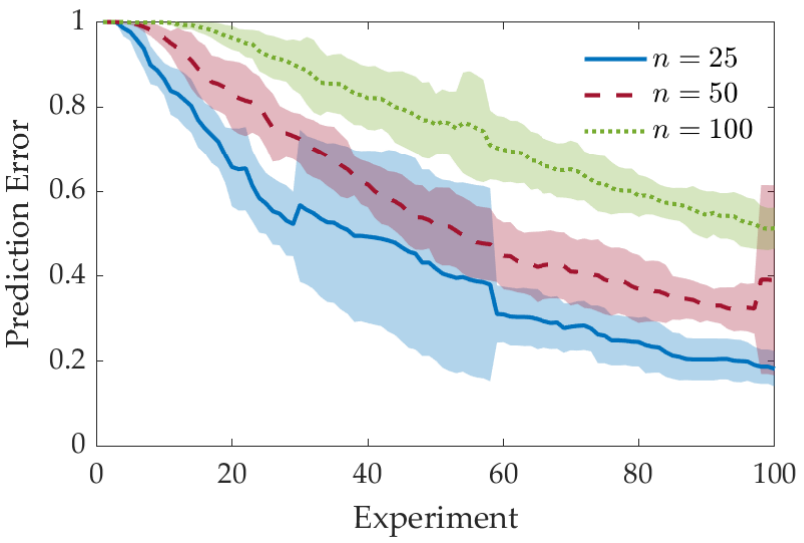}} 
\subfloat[$\sigma = 0.1, J = 20$]{
    \label{fig:CustLRb}
    \includegraphics[width=0.33\textwidth]{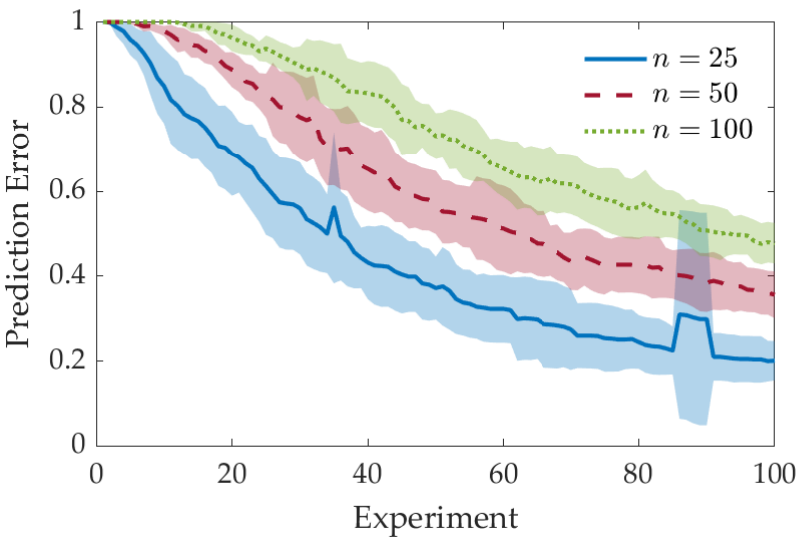}}
\subfloat[$\sigma = 0.1, J = 250$]{
    \label{fig:CustLRc}
    \includegraphics[width=0.33\textwidth]{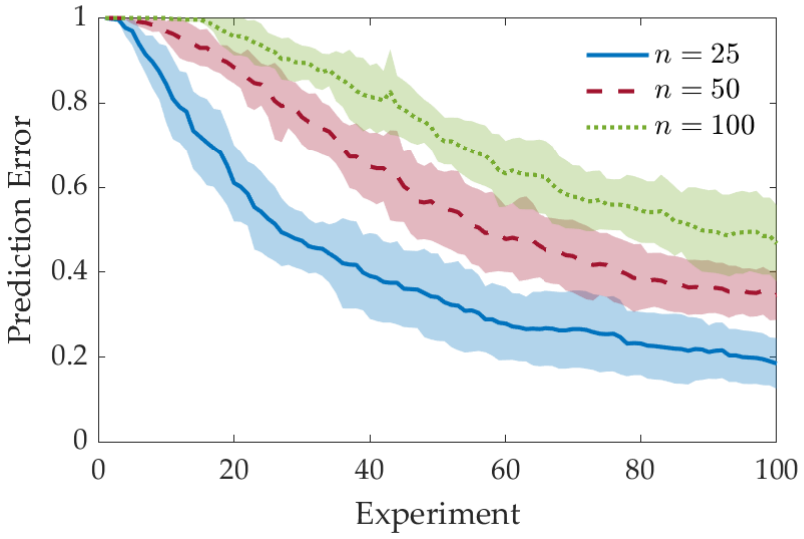}}
\caption{{(Color online)} \label{fig:CustLR}Change in prediction error as experiments are added. Lines show mean values across all solved instances, shaded areas indicate one standard deviation around the mean. For all the cases, prediction error data for individual instances are available in {the online supplement, section F}.}
\end{figure}

\subsubsection{Adaptive Sampling} \label{sec:CustAS}
We also apply our adaptive sampling strategy to this example. Following the data generation scheme used for random sampling, we define the set $\Pi$ as allowing any $w$ for which $50 \leq w_p \leq 150$ for any $p \in \mathcal{P}$ while keeping the other constraint parameters fixed. Irrespective of the size of \eqref{eqn:IOP}, we find that BARON struggles to find even a feasible solution for \eqref{eqn:ASP} when we try to solve it exactly. Therefore, we solve it here using the heuristic solution algorithm, Algorithm \ref{alg:RandomAdaIO}. All the $S$ subproblems were solved with a time limit of 100 s. Although BARON is unable to solve the subproblems to optimality, it still finds feasible solutions that show good potential in reducing the size of the admissible set. The significant impact of our adaptive sampling strategy is shown in Figure \ref{fig:CustAdaptive}. In all three cases, the use of adaptive sampling results in a more than 50\% reduction in the number of experiments required to achieve the same prediction accuracy as obtained from the standard approach using random sampling after 100 experiments. Furthermore, Figure \ref{fig:CustAdaptivea} shows the effect of the parameter $S$ in the heuristic solution approach for \eqref{eqn:ASP}. One can see that even a small $S$ results in a large increase in the rate of reduction of the prediction error and moreover, the rate stabilizes very quickly for a rather small $S$. By performing a similar study on higher-dimensional problems, we find that using an $S$ equal to the dimensionality of the problem $n$ to be a good heuristic to achieve the desired effect with adaptive sampling. 
\definecolor{Elecblue}{rgb}{0.45098,0.76078,0.9843}
\pgfplotsset{tick label style={font=\Large},
    label style={font=\Large},
    legend style={font=\large}}
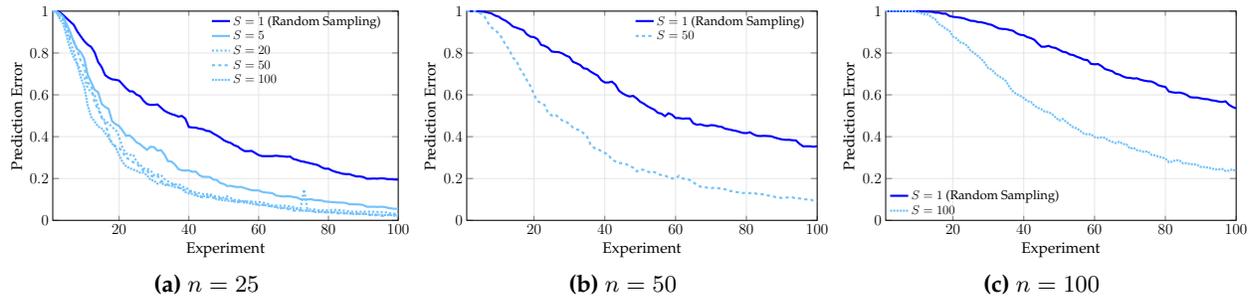
\begin{figure}[h!]
\centering
\subfloat[$n = 25$]{
    \label{fig:CustAdaptivea}
\begin{tikzpicture}[scale=0.4]
    \begin{axis}[xmin = 1, xmax = 100, ymin = 0, ymax = 1, ylabel = Prediction Error, xlabel = Experiment, xscale = 1.38, legend style={at={(0.7,1)},anchor=north east, draw=none, fill=none}, legend cell align={left}, y label style={at={(-0.055,0.5)}}, grid style={gray!20},ymajorgrids=true,xmajorgrids=true, height = 8.54cm]
		\addplot[blue, line width =  2 pt] table[x index = {0}, y index = {1}] {Adaptive25.txt};   \addlegendentry{$S = 1$ (Random Sampling)}
		\addplot[Elecblue, line width =  2 pt] table[x index = {0}, y index = {2}] {Adaptive25.txt};   \addlegendentry{$S = 5$}
		\addplot[Elecblue, dotted, line width =  2 pt] table[x index = {0}, y index = {3}] {Adaptive25.txt};   \addlegendentry{$S = 20$}
		\addplot[Elecblue, dashed, line width =  2 pt] table[x index = {0}, y index = {4}] {Adaptive25.txt};   \addlegendentry{$S = 50$}
		\addplot[Elecblue, densely dotted, line width =  2 pt] table[x index = {0}, y index = {5}] {Adaptive25.txt};   \addlegendentry{$S = 100$}
\end{axis}
\end{tikzpicture}}
\subfloat[$n = 50$]{
    \label{fig:CustAdaptiveb}
\begin{tikzpicture}[scale=0.4]
    \begin{axis}[xmin = 1, xmax = 100, ymin = 0, ymax = 1, ylabel = Prediction Error, xlabel = Experiment, xscale = 1.4, legend style={at={(0.7,1)},anchor=north east, draw=none, fill=none}, legend cell align={left}, y label style={at={(-0.055,0.5)}}, grid style={gray!20},ymajorgrids=true,xmajorgrids=true, height = 8.54cm]
	\addplot[blue, line width =  2 pt] table[x index = {0}, y index = {1}] {Adaptive50.txt}; \addlegendentry{$S = 1$ (Random Sampling)}
	\addplot[Elecblue, dashed, line width =  2 pt] table[x index = {0}, y index = {2}] {Adaptive50.txt}; \addlegendentry{$S = 50$}
\end{axis}
\end{tikzpicture}}
\subfloat[$n = 100$]{
    \label{fig:CustAdaptivec}
\begin{tikzpicture}[scale=0.4]
    \begin{axis}[xmin = 1, xmax = 100, ymin = 0, ymax = 1, ylabel = Prediction Error, xlabel = Experiment, xscale = 1.4, legend style={at={(0,0)},anchor=south west, draw=none, fill=none}, legend cell align={left}, y label style={at={(-0.055,0.5)}}, grid style={gray!20},ymajorgrids=true,xmajorgrids=true, height = 8.54cm]
	\addplot[blue, line width =  2 pt] table[x index = {0}, y index = {1}] {Adaptive100.txt}; \addlegendentry{$S = 1$ (Random Sampling)}
	\addplot[Elecblue, densely dotted, line width =  2 pt] table[x index = {0}, y index = {2}] {Adaptive100.txt}; \addlegendentry{$S = 100$}
\end{axis}
\end{tikzpicture}}
\caption{{(Color online)} \label{fig:CustAdaptive}Effect of adaptive sampling on the evolution of prediction error. Lines show mean values across ten instances. Training data for all cases was generated using $\sigma = 0.01, J = 30$.}
\end{figure}

%\begin{figure}[h!]
%\centering
%\subfloat[$n = 25$]{
%    \label{fig:CustAdaptivea}
%    \includegraphics[width=0.33\textwidth]{Adaptive_dim25.png}} 
%\subfloat[$n = 50$]{
%    \label{fig:CustAdaptiveb}
%    \includegraphics[width=0.33\textwidth]{Adaptive_dim50.png}}
%\subfloat[$n = 100$]{
%    \label{fig:CustAdaptivec}
%    \includegraphics[width=0.33\textwidth]{Adaptive_dim100.png}}
%\caption{\label{fig:CustAdaptive}Effect of adaptive sampling on the evolution of prediction error. Lines show mean values across ten instances. Training data for all cases was generated using $\sigma = 0.01, J = 30$.}
%\end{figure}

\subsection{Cost Estimation for Production Planning} \label{sec:ProdPlan}
In our second case study, we consider the problem of production planning for a large manufacturing site consisting of multiple processes within an interconnected process network. It is commonly formulated as an LP:
\stepcounter{equation}
\begin{varsubequations}{13}
\label{eqn:ProdPlan}
    \begin{align}
     \minimize\limits_{x, y, w} \quad & \sum\limits_{h \in \mathcal{H}} \sum\limits_{m \in \mathcal{M}} \left( \sum\limits_{p \in \mathcal{P}} c_{pmh}x_{pmh} + f_{mh} w_{mh} \right) \label{eqn:MinCost} \\
     \st \quad & q^{\mathrm{min}}_{m} \leq q^0_m + \sum\limits_{h' = 1}^{h} \left( \sum\limits_{p \in \mathcal{P}} x_{pmh'} + w_{mh'} - d_{mh'}\right) \leq q^{\mathrm{max}}_m \quad \forall \, m \in \mathcal{M}, h \in \mathcal{H} \label{eqn:Inventory} \\
     & 0 \leq w_{mh} \leq w^{\mathrm{max}}_{mh} \quad \forall \, m \in \mathcal{M}, h \in \mathcal{H} \label{eqn:PurcRest} \\
     & x_{pmh} = \mu_{pm} y_{ph} \quad \forall \, p \in \mathcal{P}, m \in \mathcal{M}, h \in \mathcal{H} \label{eqn:NetDesign} \\
     & 0 \leq x_{pmh} \leq x^{\mathrm{max}}_{pmh} \quad \forall \, p \in \mathcal{P}, m \in \mathcal{M}, h \in \mathcal{H} \label{eqn:Bounds},
     \end{align}
\end{varsubequations}
where $\mathcal{P}, \mathcal{M}$, and $\mathcal{H} =  \{1, \ldots, H\}$ are the sets of processes, materials, and time periods, respectively. The amount of material $m$ produced or consumed (depending on the sign) by process $p$ in time period $h$ is denoted by $x_{pmh}$. Product demand and additional purchase of a material are denoted by $d_{mh}$ and $w_{mh}$, respectively. Inventory constraints and restrictions on the amounts purchased are stated in constraints \eqref{eqn:Inventory} and \eqref{eqn:PurcRest}, respectively. The structure of the process network is defined by constraints \eqref{eqn:NetDesign} where $y_{ph}$ denotes the amount of a reference material for process $p$ produced in time period $h$, and $\mu_{pm}$ is a conversion factor that specifies how much of a material $m$ is produced or consumed for one unit of the reference material. According to \eqref{eqn:MinCost}, the objective is to minimize the total production and purchasing cost while satisfying given product demand. While purchasing prices $f_{mh}$ are readily known, production costs $c_{pmh}$ are often difficult to estimate, which is one major reason why in practice, production planning is still mostly performed manually by human planners \citep{Troutt2006}. Experienced planners have an excellent intuition for the relative differences in costs, but this information is not explicitly expressed in numbers. The goal is to use past production plans, which reflect the planners' decisions, to infer these costs.

Here we conduct a case study using the process network for a petrochemical site with 28 chemicals and 38 processes from \citet{Sahinidis1989} and \citet{Zhang2016a}. The parameters required to model this network are given in {the online supplement, section G}. We test our methodology on this problem by generating synthetic training data simulating expert planners' decision making under different operating and market conditions. Here, each data point consists of inputs $(\mu, d)$ and the corresponding decisions $(x, y, w)$. For different scenarios, the conversion factors $\mu_{pm}$ have been assumed to vary between 75\% to 100\% of their nominal values on account of changing efficiencies of individual processes. Also, the product demand $d_{mh}$ is considered to fluctuate $\pm 10\%$ from its nominal value. For each instance, we generate 100 such scenarios, i.e. $|\mathcal{I}| = 100$. For each of these scenarios, we then generate the respective decisions by solving \eqref{eqn:ProdPlan} with the same arbitrarily generated $c_{pmh}$ and $f_{mh}$ values. Since human decision making is often inconsistent, we distort the true optimal solution $(x^*, y^*, w^*)$ as $(x^* + \gamma_1, y^* + \gamma_2, w^* + \gamma_3)$ where $\gamma_i \sim \mathcal{N}(0, \sigma^2)$ for all $i \in \{1, 2, 3\}$.

In this case study, we consider training data instances of three different sizes by varying the number of time periods $H$. All training data is generated using $\sigma = 3$ and $J = 30$. Each dataset is used to solve \eqref{eqn:IOP} with both Algorithm \ref{alg:Two-phase} and the decomposition-based Algorithm \ref{alg:Online Algorithm}. The problem instances were solved with a time limit of 14,400 s using 24 cores on the Mesabi cluster of the MSI. We find that while Algorithm \ref{alg:Two-phase} is unable to find even a feasible solution for any of the three cases, Algorithm \ref{alg:Online Algorithm} can solve these problems in less than 10 minutes.

Unlike the previous case study where we assessed the quality of only a point estimate obtained from Phase 2, here we instead focus on the quality of the set $\widehat{\mathcal{C}}$. To do this, we sample ten point estimates by solving \eqref{eqn:P2} with ten random reference cost vectors. We again consider a test dataset of 100 data points consisting of arbitrary $(\mu, d)$ and $(x^*, y^*, w^*)$ pairs. We relax the prediction error criteria to obtain a more realistic metric that measures the closeness of the generated predictions to the true optimal solutions; the metric is defined as $d_{\mathcal{V}}(x^*, \hat{x}) = \sum\limits_{v \in \mathcal{V}} \lVert x_v^* - \hat{x}_v \rVert_{\infty}$, where $\mathcal{V}$ is the set of test data points. Figure \ref{fig:ProdPlana} shows the change in the normalized distance metric with the addition of new experiments. One can verify that the spread around the mean value after 100 experiments is fairly small, implying high confidence in the final point estimate irrespective of the quality of the reference. However, this does not discount the importance of a good reference as the admissible set also likely contains the true $c$ which, if used as a reference, would result in a zero $d_{\mathcal{V}}$ even with a single experiment. 

\begin{figure}[h!]
    \centering
    \subfloat[Random sampling for all cases.]{
        \label{fig:ProdPlana}
        \includegraphics[width=0.45\textwidth]{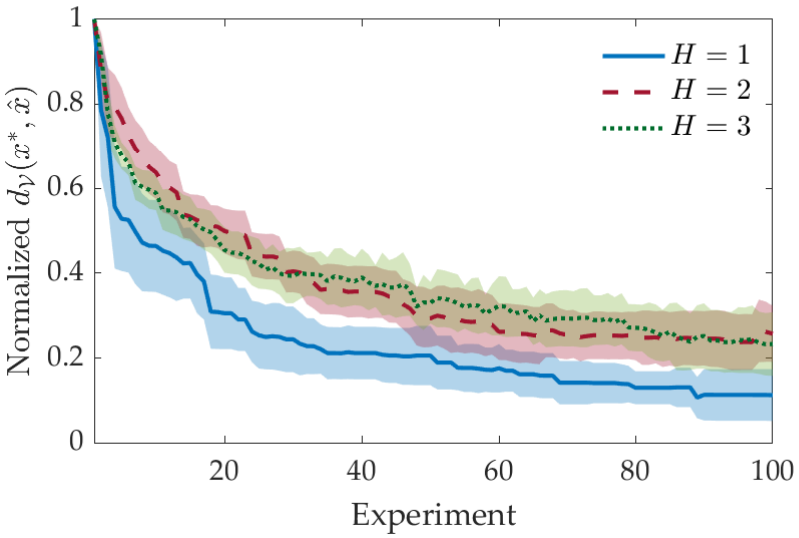}} \hspace{1em}
    \subfloat[Comparison between random and adaptive sampling for $H = 1$. For adaptive sampling, $S$ was set to 100.]{
        \label{fig:ProdPlanb}
        \includegraphics[width=0.45\textwidth]{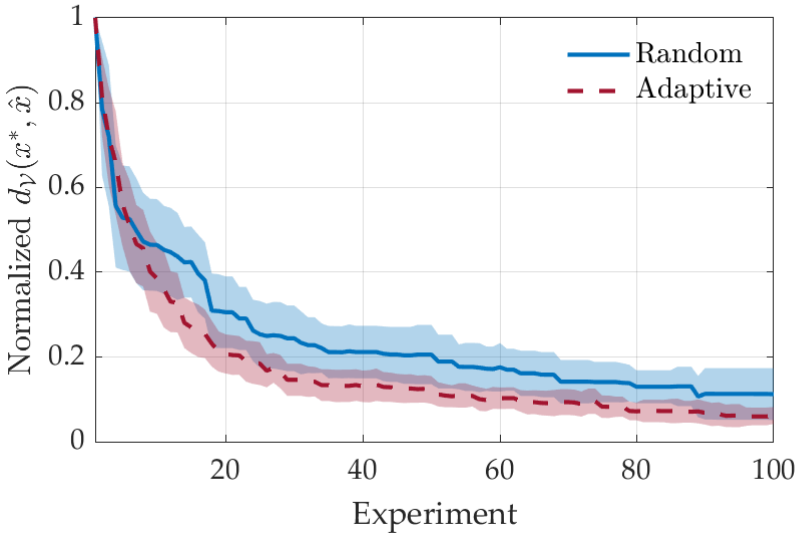}}
    \caption{{(Color online)} \label{fig:ProdPlan}Normalized $d_{\mathcal{V}}(x^*, \hat{x})$ as experiments are added to the training set. Lines show mean values across all instances, shaded areas indicate one standard deviation around the mean. Training data was generated for $\sigma = 3, J = 30.$}
\end{figure}

Notice that the curve representing the mean value decreases in discrete steps with several flat regions in between two steps. This is a consequence of not all inputs resulting in a reduction in the size of the admissible set. Therefore, we also evaluate the impact of adaptive sampling on mitigating this issue. Due to the larger size of the FOP here compared to \eqref{eqn:CustPref}, we solve the subproblems in Algorithm \ref{alg:RandomAdaIO} with an increased time limit of 200 s. However, even with the increased time limit, the solver struggles to find good feasible solutions for the larger instances with $H = 2$ (132 variables) and $H = 3$ (198 variables). Here, we show the effect of adaptive sampling with the case of $H = 1$ in Figure \ref{fig:ProdPlanb}. As can be observed, adaptive sampling sustains a high rate of decrease in the distance metric for a longer duration compared to naive random sampling. This results in just $\sim 52$ experiments being required to achieve the same effect as 100 random inputs. Moreover, one can see that the variance around the mean in the case of adaptive sampling is noticeably lower which shows that in addition to requiring less experiments, it also finds estimates with higher confidence levels.

\section{Conclusions} \label{sec:Conc}
In this work, we addressed data-driven inverse linear optimization with noisy observations, for which we introduced a new problem formulation that offers two practical advantages over other existing methods: (i) It allows the recovery of a less restrictive and generally more appropriate admissible set of cost estimates by assuming that the optimal solutions of the FOP lie at the vertices of the feasible region. (ii) Instead of randomly choosing a point estimate from the admissible set, it makes use of a reference cost vector to choose the cost estimate that most resembles the user's prior belief.

An exact two-phase algorithm was developed to solve the IOP, and we further proposed an efficient decomposition algorithm and an adaptive sampling method that are especially suited for an online inverse optimization setting. Results from extensive computational experiments based on two case studies show that the proposed methods are effective in significantly reducing both the computation time and data requirement for generating cost estimates with a reasonably low prediction error on unseen datasets.

\section*{Acknowledgments}
The authors gratefully acknowledge the financial support from the National Science Foundation under Grant No. 2044077 and the Minnesota Supercomputing Institute (MSI) at the University of Minnesota for providing resources that contributed to the research results reported in this paper. RG acknowledges financial support from a departmental fellowship sponsored by 3M.

\bibliographystyle{newapa}
\bibliography{Inverse_Optimization}

\newpage
\setcounter{page}{1}

\centerline{\LARGE{\textbf{Online Supplement}}}

\renewcommand*{\thesection}{\Alph{section}.}
\setcounter{section}{0}

%\newpage

\section{Omitted Proofs} \label{sec:proofs}
\begin{manuallemma}{1}[]
\textit{The set $\widehat{\mathcal{C}}'$ is nonempty.}
\end{manuallemma}
\begin{proof}
Since $\{\tilde{x}: A_i \tilde{x} \leq b_i\}$ is assumed to be compact and nonempty, $\argmin_{\tilde{x} \in \mathbb{R}^n} \left\lbrace \hat{c}^{\top} \tilde{x}: A_i \tilde{x} \leq b_i \right\rbrace$ is nonempty, and for every $\hat{c} \in \mathbb{R}^n$, there is at least one vertex of $\{\tilde{x}: A_i \tilde{x} \leq b_i\}$ that minimizes $\hat{c}^{\top} \tilde{x}$. This implies that $\argmin_{\tilde{x} \in \mathbb{R}^n} \left\lbrace \hat{c}^{\top} \tilde{x}: A_i \tilde{x} \leq b_i \right\rbrace \cap \mathcal{V}_i$ is nonempty and finite, due to the finiteness of $\mathcal{V}_i$, for all $i \in \mathcal{I}$ and $\hat{c} \in \mathbb{R}^n$. As a result, there is at least one $(\hat{c}, \hat{x})$ that minimizes the loss function in the definition of the outer $\argmin$ set in \eqref{eqn:Vertex_set}; hence, $\widehat{\mathcal{C}}'$ is nonempty.
\end{proof}

\begin{manualcorollary}{1}[]
\textit{If Condition \ref{asp:uniqueX} holds, \eqref{eqn:IOP} has a unique solution.}
\end{manualcorollary}
\begin{proof}
Condition \ref{asp:uniqueX} implies that $|\widehat{\mathcal{X}}^*| = 1$. Corollary \ref{cor:uniqueSolution} then directly follows from Theorem \ref{thm:finiteSolutions}.
\end{proof}

\begin{manuallemma}{2}[]
\textit{Problems \eqref{eqn:IOP_innerMin} and \eqref{eqn:P1} have the same set of feasible $\hat{x}$.}
\end{manuallemma}

\begin{proof}
By construction, for a given feasible $\hat{c}$, \eqref{eqn:IOP_innerMin} and \eqref{eqn:P1} exhibit the same set of feasible $\hat{x}$. Observe that any $\hat{c}$ for which $\lVert \hat{c} \rVert_1 \neq 1$ is infeasible in \eqref{eqn:P1}. Let $\tilde{c}$ denote a $\hat{c}$ that is feasible in \eqref{eqn:IOP_innerMin} but not in \eqref{eqn:P1}, i.e. $\lVert \tilde{c} \rVert_1 \neq 1$. However, there exists a positive $\alpha$ such that $\hat{c} = \alpha \tilde{c}$ is feasible in \eqref{eqn:P1}, and it exhibits the same set of feasible $\hat{x}$ as $\tilde{c}$ does in \eqref{eqn:IOP_innerMin} since uniform positive scaling of a cost vector does not change the set of optimal solutions of an LP. Hence, problems \eqref{eqn:IOP_innerMin} and \eqref{eqn:P1} have the same set of feasible $\hat{x}$.
\end{proof}

\begin{manualproposition}{1}[]
\textit{If Condition \ref{asp:uniqueX} holds, the optimal solution to \eqref{eqn:IOP} can be obtained by solving \eqref{eqn:P1}, which provides the unique optimal $\hat{x}^*$, and subsequently solving \eqref{eqn:P2} with $\hat{x} = \hat{x}^*$.
}\end{manualproposition}

\begin{proof}
Proposition \ref{prp:TwoPhase} directly follows from Corollary \ref{cor:uniqueSolution} and Lemma \ref{lem:epsilon}.
\end{proof}

\begin{manuallemma}{3}[]
\textit{Given a set of experiments $\{1, \ldots, N\}$, let $\eqref{eqn:P1}_i$ with $i \leq N$ be an instance of \eqref{eqn:P1} with $\mathcal{I} = \{ i \}$ and let its optimal value be $\bar{r}_i^*$. If $r^*$ denotes the optimal value of \eqref{eqn:P1} with $\mathcal{I} = \{ 1, \dots, N\}$, then $\sum\limits_{i=1}^N \bar{r}_i^* \leq r^*$.
}\end{manuallemma}

\begin{proof}
Consider the following reformulation of \eqref{eqn:P1}:
\begin{equation}
\label{eqn:P1'}
\tag{P1'}
\begin{aligned}
    \minimize\limits_{\hat{c}, \, \hat{x}, \, s, \, \lambda, \, z, \, w, \, \hat{c}^+, \hat{c}^-} \quad & \sum\limits_{i \in \mathcal{I}}\sum\limits_{j \in \mathcal{J}_{i}} \lVert x_{ij} - \hat{x}_{i} \rVert \\
    \st \quad \quad & \hat{c}_i = \hat{c}_{i+1} \quad \forall \, i \in \{1, \ldots, N-1\} \\
    & \hat{c}_i + A_i^{\top} \lambda_{i} = 0 \quad \forall \, i \in \mathcal{I}  \\
    & A_i\hat{x}_{i} + s_{i} = b_i \quad \forall \, i \in \mathcal{I}  \\
    & \lambda_{i} \leq M z_{i} \quad \forall \, i \in \mathcal{I} \\
    & s_{i} \leq M (e-z_{i}) \quad \forall \, i \in \mathcal{I}   \\
    & e^{\top} z_i \geq n \quad \forall \, i \in \mathcal{I}  \\
    & \hat{c}_i = \hat{c}_i^+ - \hat{c}_i^-  \quad \forall \, i \in \mathcal{I} \\
    & \hat{c}_i^+ \leq w_i  \quad \forall \, i \in \mathcal{I} \\
    & \hat{c}_i^- \leq e-w_i \quad \forall \, i \in \mathcal{I} \\
    & e^{\top} (\hat{c}_i^+ + \hat{c}_i^-) = 1   \quad \forall \, i \in \mathcal{I} \\
    & \hat{x}_i \in \mathbb{R}^n, \, s_i \in \mathbb{R}_+^m, \, \lambda_i \in \mathbb{R}_+^m, \, z_i \in \{0,1\}^m \quad \forall \, i \in \mathcal{I} \\
    & \hat{c}_i \in \mathbb{R}^n, \, \hat{c}_i^+ \in \mathbb{R}^n_+ , \, \hat{c}_i^- \in \mathbb{R}^n_+, \, w_i \in \{0,1\}^n \quad \forall \, i \in \mathcal{I},
\end{aligned}
\end{equation}
where we replace the variable $\hat{c}$ in \eqref{eqn:P1} with $N$ disaggregated variables denoted by $\hat{c}_i$ for each experiment $i \in \mathcal{I}$. We further add the first set of constraints, which equate all $\hat{c}_i$ to ensure \eqref{eqn:P1'} is equivalent to \eqref{eqn:P1}. Consider a relaxation of \eqref{eqn:P1'} obtained by removing these first set of constraints. With this change, the problem decomposes into $N$ independent subproblems, one for each experiment $i \in \mathcal{I}$, which is precisely $\eqref{eqn:P1}_i$. It follows that the optimal value of the relaxed problem is $\sum\limits_{i=1}^N \bar{r}_i^*$, and since it is a relaxation of \eqref{eqn:P1'} and therefore also a relaxation of \eqref{eqn:P1},  $\sum\limits_{i=1}^N \bar{r}_i^* \leq r^*$.
\end{proof}

\begin{manualcorollary}{2}[]
\textit{Let $\eqref{eqn:P1}_{[i]}$ be an instance of \eqref{eqn:P1} with $\mathcal{I} = \{1, \ldots, i\}$ and $\bar{r}^*_{[i]}$ be its optimal value. Then, $\bar{r}^*_{[i-1]} + \bar{r}^*_{i} \leq \bar{r}^*_{[i]}$.
}\end{manualcorollary}
\begin{proof}
Corollary \ref{cor: SequentialUpdate} follows from Lemma \ref{lem:valLB}.
\end{proof}

\begin{manualproposition}{2}[]
\textit{Let $(\hat{x}_1, \ldots, \hat{x}_{\ell-1})$ and $\hat{x}_{\ell}$ be feasible solutions to problems $\eqref{eqn:P1}_{[\ell-1]}$ and $\eqref{eqn:P1}_{\ell}$, respectively. Then, $(\hat{x}_1, \ldots, \hat{x}_{\ell})$ is a feasible solution to $\eqref{eqn:P1}_{[\ell]}$ if the following problem is feasible:
}
\begin{equation}
    \label{eqn:FP}
    \tag*{$(\mathrm{FP})_{\ell}$}
    \begin{aligned}
        \minimize\limits_{\hat{c}, \hat{c}^+, \hat{c}^-, \gamma, w} \quad & 0 \\
        \st \quad & \hat{c}  = \sum\limits_{t \in \mathcal{T}(\hat{x}_i)} \gamma_{it}(-a_{it}^{\top}) \quad \forall \, i \in \{1, \ldots, \ell \} \\
        & {\hat{c}} = {\hat{c}}^+ - {\hat{c}}^-  \\
        & {\hat{c}}^+ \leq w  \\
        & {\hat{c}}^- \leq e-w \\
        & e^{\top} ({\hat{c}}^+ + {\hat{c}}^-) = 1   \\        
        & \gamma_{it} \geq 0 \quad \forall \, i \in \{1, \ldots, \ell \}, t \in \mathcal{T}(\hat{x}_i) \\
        & \hat{c} \in \mathbb{R}^n,\,{\hat{c}}^+ \in \mathbb{R}^n_+,\,{\hat{c}}^- \in \mathbb{R}^n_+ ,\, w \in \{0,1\}^n.
    \end{aligned}
\end{equation}
\end{manualproposition}
\begin{proof}
By construction, a feasible \ref{eqn:FP} implies that $\{\hat{x}_1, \ldots, \hat{x}_{\ell} \}$ are such that there exists a $\hat{c}$ in $\bigcap\limits_{i \in \mathcal{I}} \mathrm{cone}\left(\{-a_{it}\}_{t \in \mathcal{T}(\hat{x}_{i})}\right)$ with $\lVert \hat{c} \rVert_1 = 1$. This indicates that for the given solutions to $\eqref{eqn:P1}_{[\ell-1]}$ and $\eqref{eqn:P1}_{\ell}$, one can find a common $\hat{c}$ that is feasible in both problems, which in turn implies that $(\hat{x}_1, \ldots, \hat{x}_{\ell})$ is feasible in $\eqref{eqn:P1}_{[\ell]}$.
\end{proof}

\begin{manualcorollary}{3}[]
\textit{Let $(\hat{x}_1, \ldots, \hat{x}_{\ell-1})$ and $\hat{x}_{\ell}$ be optimal solutions to problems $\eqref{eqn:P1}_{[\ell-1]}$ and $\eqref{eqn:P1}_{\ell}$, respectively. Then, $(\hat{x}_1, \ldots, \hat{x}_{\ell})$ is an optimal solution to $\eqref{eqn:P1}_{[\ell]}$ if \ref{eqn:FP} is feasible.
}\end{manualcorollary}
\begin{proof}
Due to Corollary \ref{cor: SequentialUpdate}, the sum of the optimal values of $\eqref{eqn:P1}_{[\ell-1]}$ and $\eqref{eqn:P1}_{\ell}$ provide a lower bound on the optimal value of $\eqref{eqn:P1}_{[\ell]}$, i.e. $\bar{r}^*_{[\ell-1]} + \bar{r}^*_{\ell} \leq \bar{r}^*_{[\ell]}$. If \ref{eqn:FP} is feasible, $(\hat{x}_1, \ldots, \hat{x}_{\ell})$ is feasible in $\eqref{eqn:P1}_{[\ell]}$ due to Proposition \ref{prp:DecompValidity} and therefore yields an upper bound, i.e. $\bar{r}^*_{[\ell-1]} + \bar{r}^*_{\ell} \geq \bar{r}^*_{[\ell]}$. As the lower and upper bounds coincide, it follows that $(\hat{x}_1, \ldots, \hat{x}_{\ell})$ is an optimal solution to $\eqref{eqn:P1}_{[\ell]}$.
\end{proof}

\newpage
\section{Algorithm Pseudocode} \label{sec:OnlineExt}

\setcounter{algorithm}{2}
\begin{algorithm}
\begin{algorithmic}[1]
    \State initialize: $\widehat{\mathcal{C}} \gets \mathbb{R}^n$
    \ForAll{$ \ell \in \{1, \dots, N\}$}
        \State solve $\eqref{eqn:P1}_{\ell}$, obtain $\hat{x}_{\ell}^*$
        \State $\widehat{\mathcal{C}} \gets \widehat{\mathcal{C}} \, \bigcap \, \mathrm{cone}\left(\{-a_{\ell t}\}_{t \in \mathcal{T}(\hat{x}^*_{\ell})}\right)$
        \State solve \ref{eqn:FP}
        \If {\ref{eqn:FP} is feasible}
            \State solve \eqref{eqn:P2} with $\mathcal{I} = \{1,\ldots,\ell \}$, obtain $\hat{c}_{\ell}^*$
        \Else
            \State Solve (P1)$_{[\ell]}$, warm-start with $(\hat{x}_1^*, \ldots, \hat{x}_{\ell - 1}^*)$, obtain $\hat{x}_{[\ell]}^* = (\hat{x}_1^*, \ldots, \hat{x}_{\ell}^*)$
            \State $\widehat{\mathcal{C}} \gets \bigcap\limits_{i \in \{1, \ldots, \ell \}} \mathrm{cone}\left(\{-a_{it}\}_{t \in \mathcal{T}(\hat{x}^*_{i})}\right)$
            \State solve \eqref{eqn:P2} with $\mathcal{I} = \{1,\ldots,\ell \}$, obtain $\hat{c}_{\ell}^*$
        \EndIf
    \EndFor
        \State $\hat{c}^* \gets \hat{c}^*_{|\mathcal{I}|}$
    \caption{An efficient algorithm for solving \eqref{eqn:IOP} in an online environment}
    \label{alg:Online Algorithm_1}
\end{algorithmic}
\end{algorithm}

%\newpage

\section{Example Illustrating Remark \ref{rem:high_noise}} \label{sec:high_noise_examp}
Let the \eqref{eqn:FOP} be  $\max\limits_{x_1, x_2} \{2x_1 + x_2 \, : \, 14x_1 + 10x_2 \leq 17, 0 \leq x_1, x_2 \leq 1\}$.  Let $|\mathcal{I}| = 1$ and  $|\mathcal{J}| = 4$ with $x_1 = [(1.05, 0.26), (0.9, 0.15), (1.2, 0.15), (1, 0.03)].$. Clearly, the true solution is $(1,0.3)$. However, \eqref{eqn:P1} yields the denoised estimate $\hat{x}_1 = (1, 0)$, whereas, if we consider additional constraints stating $\hat{c} < 0$ (note that the FOP here is a maximization problem), the estimate becomes $\hat{x}_1 = (1, 0.3)$. These results are illustrated in Figure \ref{fig:HighNoise}. 

\pgfplotsset{every tick label/.append style={font=\small}}
\definecolor{dtsfsf}{rgb}{1,0,0}
\definecolor{sexdts}{rgb}{0.1803921568627451,0.65,0.19607843137254902}
\definecolor{rvwvcq}{rgb}{0.08235294117647059,0.396078431372549,0.7529411764705882}
\begin{figure}[h]
    \centering
    \begin{minipage}{.4\textwidth}
        \begin{tikzpicture}[line cap=round,line join=round,>=triangle 45,x=4cm,y=4cm]
            \begin{axis}[
                x=4cm,y=4cm,
                axis lines=middle,
                grid style=dashed,
                ymajorgrids=true,
                xmajorgrids=true,
                xmin=0,
                xmax=1.3,
                ymin=-0,
                ymax=1.2,
                xlabel={$x_1$}, ylabel={$x_2$},
                xlabel style={at={(.95,-0.13)}, anchor=south},
                ylabel style={at={(-0.13,0.95)}, anchor=west},
                xtick={0,0.2,...,1},
                ytick={0,0.2,...,1},]
                \clip(-0.6700162530465679,-0.3202016250014847) rectangle (1.481873185705448,1.1129930954148999);
                \fill[line width=2pt,color=rvwvcq,fill=rvwvcq,fill opacity=0.15] (0,1) -- (0.5,1) -- (1,0.3) -- (1,0) -- (0,0) -- cycle;
                \draw [line width=2pt,color=rvwvcq] (0,1)-- (0.5,1);
                \draw [line width=2pt,color=rvwvcq] (0.5,1)-- (1,0.3);
                \draw [line width=2pt,color=rvwvcq] (1,0.3)-- (1,0);
                \draw [line width=2pt,color=rvwvcq] (1,0)-- (0,0);
                \draw [line width=2pt,color=rvwvcq] (0,0)-- (0,1);
            \end{axis}
            % \begin{scriptsize}
            % \draw [line width = 2pt,color=sexdts] (1,0.6666666666666667) circle (5pt);
            % \end{scriptsize}
            \draw [line width = 1pt,color=dtsfsf] (1.05,0.26) circle (4pt);
            \draw [line width = 1pt,color=dtsfsf] (0.9,0.15) circle (4pt);
            \draw [line width = 1pt,color=dtsfsf] (1.1,0.15) circle (4pt);
            \draw [line width = 1pt,color=dtsfsf] (1,0.03) circle (4pt);
            \draw [line width = 1pt,color=sexdts] (1,0) circle (4pt);
            \fill[color=sexdts,fill=sexdts,fill opacity=1] (1,0) circle (4 pt);
        \end{tikzpicture}
        % \captionsetup{labelformat=empty}
        \caption*{(a) Problem \eqref{eqn:P1} solved without $\hat{c} < 0$}
    \end{minipage}
    \begin{minipage}{.4\textwidth}
        \begin{tikzpicture}[line cap=round,line join=round,>=triangle 45,x=4cm,y=4cm]
            \begin{axis}[
                x=4cm,y=4cm,
                axis lines=middle,
                grid style=dashed,
                ymajorgrids=true,
                xmajorgrids=true,
                xmin=0,
                xmax=1.3,
                ymin=0,
                ymax=1.2,
                xlabel={$x_1$}, ylabel={$x_2$},
                xlabel style={at={(.95,-0.13)}, anchor=south},
                ylabel style={at={(-0.13,0.95)}, anchor=west},
                xtick={0,0.2,...,1},
                ytick={0,0.2,...,1},]
                \clip(-0.6700162530465679,-0.3202016250014847) rectangle (1.481873185705448,1.1129930954148999);
                \fill[line width=2pt,color=rvwvcq,fill=rvwvcq,fill opacity=0.15] (0,1) -- (0.5,1) -- (1,0.3) -- (1,0) -- (0,0) -- cycle;
                \draw [line width=2pt,color=rvwvcq] (0,1)-- (0.5,1);
                \draw [line width=2pt,color=rvwvcq] (0.5,1)-- (1,0.3);
                \draw [line width=2pt,color=rvwvcq] (1,0.3)-- (1,0);
                \draw [line width=2pt,color=rvwvcq] (1,0)-- (0,0);
                \draw [line width=2pt,color=rvwvcq] (0,0)-- (0,1);
            \end{axis}
            \draw [line width = 1pt,color=sexdts] (1,0.3) circle (4pt);
            \draw [line width = 1pt,color=dtsfsf] (1.05,0.26) circle (4pt);
            \draw [line width = 1pt,color=dtsfsf] (0.9,0.15) circle (4pt);
            \draw [line width = 1pt,color=dtsfsf] (1.1,0.15) circle (4pt);
            \draw [line width = 1pt,color=dtsfsf] (1,0.03) circle (4pt);
            % \draw [line width = 2pt,color=sexdts] (1,0) circle (5pt);
            \fill[color=sexdts,fill=sexdts,fill opacity=1] (1,0.3) circle (4 pt);
        \end{tikzpicture}
    % \captionsetup{labelformat=empty}
    \caption*{(b) Problem \eqref{eqn:P1} solved with $\hat{c} < 0$}
    \end{minipage}
    \caption{The blue shaded regions represent the feasible region of the FOP described in \ref{sec:high_noise_examp} Red hollow circles denote the noisy observations, whereas the denoised estimates are depicted by green circles.}
    \label{fig:HighNoise}
\end{figure}

%\newpage
\section{Reformulation of \eqref{eqn:ASP}} \label{sec:ASPRefor}
In this section, we present reformulations of \eqref{eqn:ASP} that can be solved using off-the-shelf solvers. We start by reformulating \eqref{eqn:ASP} as follows:
\stepcounter{equation}
\begin{varsubequations}{14}
\label{eqn:AdaRefor1}
\begin{align}
    \maximize\limits_{\eta, y, \gamma, z, A_{\ell}, b_{\ell}, \lambda, \hat{x}_{\ell}, s} \quad & \eta \\
    \st \quad \quad & y = \sum_{t \in \mathcal{T}(\hat{x}_i)} \gamma_{it} \left(- \frac{a_{it}}{\lVert a_{it} \rVert_2}\right) \quad \forall \, i \in \mathcal{I} \label{eqn:RepeatConst}  \\
    &  0 \leq  \gamma_{it} \leq \epsilon \quad \forall \, i \in \mathcal{I}, t \in \mathcal{T}(\hat{x}_i) \\
    & (A_{\ell}, b_{\ell}) \in \Pi  \\
    & \tilde{c}_{\ell-1} + A_{\ell}^{\top} \lambda = 0  \\
    & A_{\ell}\hat{x}_{\ell} + s = b_{\ell}  \label{eqn:Inner1Start} \\
    & \lambda \leq M z \\
    & s \leq M (e-z)   \\
    & e^{\top} z \geq n \\
    & \hat{x}_{\ell} \in \mathbb{R}^n, \, y \in \mathbb{R}^n \\
    &  s_k \geq 0, \, \lambda_k \geq 0, \, z_k \geq 0 \quad \forall \, k \in \mathcal{K} \label{eqn:Inner1end} \\
    & \eta = \min\limits_{\bar{\gamma}, \delta} \; e^{\top}\delta \\
    & \begin{aligned} \label{eqn:AdaInnerConst}
      \st \quad \;\; & -\delta \leq y - \sum\limits_{k \in \mathcal{K}} \bar{\gamma}_k z_k a_{\ell k} \\ 
        & y - \sum\limits_{k \in \mathcal{K}} \bar{\gamma}_k z_k a_{\ell k} \leq \delta \\
        & \delta \in \mathbb{R}^n_+ \\
        & \bar{\gamma}_{k} \geq 0 \quad \forall \,  k \in \mathcal{K}, \\
    \end{aligned}
\end{align}
\end{varsubequations}
where constraints \eqref{eqn:Inner1Start} - \eqref{eqn:Inner1end} are the KKT optimality conditions of the linear optimization problem embedded in constraint set \eqref{eqn:ASPConst5}. Here, we exploit the fact that the binary vector $z$ occurs naturally in the linearized version of the KKT conditions. We also linearize the optimization problem embedded in the objective function of \eqref{eqn:ASP}. Since the sole source of nonlinearity in this inner problem is the norm operator, we introduce a variable $\delta$ to linearize it. The rest of the constraints remain the same as in \eqref{eqn:ASP}.

Now, since \eqref{eqn:AdaRefor1} is a bilevel optimization problem with an LP embedded in it, the problem can be reformulated by replacing the lower-level problem with its optimality conditions, based on strong duality or KKT. Here, we present both reformulations: 

\begin{enumerate}
\item Reformulating \eqref{eqn:AdaRefor1} into a single-level problem by using strong duality for the lower-level problem:
\stepcounter{equation}
\begin{varsubequations}{15}
\label{eqn:AdaDuality}
\begin{align}
    \maximize\limits_{\eta, y, \gamma, \bar{\gamma}, z, A_{\ell}, b_{\ell}, \lambda, \hat{x}, s, \delta, \nu, \omega} \quad & \eta \\
    \st \quad \;\;\;\;\;\;\;\, & \eqref{eqn:RepeatConst} - \eqref{eqn:Inner1end} \\
    & \eta = (\nu - \omega)^{\top} y \label{eqn:StDual}\\
    & -\delta \leq y - \sum\limits_{k \in \mathcal{K}} \bar{\gamma}_k z_k a_{\ell k} \label{eqn:PrFeas1}\\
    & y - \sum\limits_{k \in \mathcal{K}} \bar{\gamma}_k z_k a_{\ell k} \leq \delta \label{eqn:PrFeas2} \\
    & \nu + \omega \leq e \label{eqn:DualFeas1} \\
    & (\nu  - \omega)^{\top}a_{\ell k} z_k  \leq 0 \quad \forall \, k \in \mathcal{K} \label{eqn:DualFeas2} \\
    & \delta \in \mathbb{R}_+^n, \; \bar{\gamma}_k \geq 0 \quad \forall \, k \in \mathcal{K} \\
    & \nu \in \mathbb{R}_+^n, \; \omega \in \mathbb{R}_+^n,
\end{align}
\end{varsubequations}
where $\nu$ and $\omega$ are the dual variables for the lower-level problem in \eqref{eqn:AdaRefor1}. Constraint \eqref{eqn:StDual} imposes the strong duality condition by equating primal and dual objective functions of the lower-level problem in \eqref{eqn:AdaRefor1}. Constraints \eqref{eqn:PrFeas1}-\eqref{eqn:PrFeas2} are the primal feasibility conditions, \eqref{eqn:DualFeas1}-\eqref{eqn:DualFeas2} are the dual feasibility conditions.

\item Reformulating \eqref{eqn:AdaRefor1} into a single-level problem by using KKT-based optimality conditions for the lower-level problem:
\stepcounter{equation}
\begin{varsubequations}{16}
\label{eqn:AdaKKT}
\begin{align}
    \maximize\limits_{\substack{\eta, y, \gamma, \bar{\gamma}, z, A_{\ell}, b_{\ell}, \lambda, \hat{x}, s, \delta, \\ \nu, \omega, \psi, \phi, s^1, s^2, z^1, z^2, z^3, z^4}} \quad & \eta \\
    \st \quad \;\;\;\;\;\;\; & \eqref{eqn:RepeatConst} - \eqref{eqn:Inner1end} \\
    & e - \nu - \omega - \psi = 0 \label{eqn:KKTStatCond1}\\
    & (\nu  - \omega)^{\top}a_{\ell k} z_k  - \phi_k = 0 \quad \forall \, k \in \mathcal{K} \label{eqn:KKTStatCondn2}\\
    & -\delta + s^1 = y - \sum\limits_{k \in \mathcal{K}} \bar{\gamma}_k z_k a_{\ell k} \label{eqn:KKTPrFeas1}\\
    & y - \sum\limits_{k \in \mathcal{K}_{M}} \bar{\gamma}_k z_k a_{\ell k} + s^2 = \delta \label{eqn:KKTPrFeas2} \\
    & s^1 \leq Mz^1 \label{eqn:KKTCompSl1} \\
    & \nu \leq M(e-z^1) \\
    & s^2 \leq Mz^2 \\
    & \omega \leq M(e-z^2) \\
    & \delta \leq Mz^3 \\
    & \psi \leq M(e-z^3) \\
    & \bar{\gamma} \leq Mz^4 \\
    & \phi \leq M(e-z^4) \label{eqn:KKTCompSl2} \\
    & \delta \in \mathbb{R}^n_+, \, \nu \in \mathbb{R}^n_+, \, \omega \in \mathbb{R}^n_+, \, \psi \in \mathbb{R}^n_+, s^1 \in \mathbb{R}^n_+, \, s^2 \in \mathbb{R}^n_+ \\
    & z^1 \in \{0, 1\}^n, \, z^2 \in \{0, 1\}^n, \, z^3 \in \{0, 1\}^n \\
    & \bar{\gamma}_k \geq 0, \, \phi_k \geq 0, \, z^4_k \in \{0, 1\}  \quad \forall \, k \in \mathcal{K},
\end{align}
\end{varsubequations}
where $\nu, \, \omega, \, \psi,$ and $\phi$ are the Lagrange multipliers of the constraints \eqref{eqn:AdaInnerConst} of the lower-level problem in \eqref{eqn:AdaRefor1}. Constraints \eqref{eqn:KKTStatCond1} - \eqref{eqn:KKTStatCondn2} are the stationarity conditions, \eqref{eqn:KKTPrFeas1} - \eqref{eqn:KKTPrFeas2} are the primal feasibility conditions, and \eqref{eqn:KKTCompSl1} - \eqref{eqn:KKTCompSl2} represent a linearized version of the complementary conditions corresponding to the constraints \eqref{eqn:AdaInnerConst} of the lower-level problem in \eqref{eqn:AdaRefor1}.
\end{enumerate}
In our computational case studies, we find that the KKT-based reformulation \eqref{eqn:AdaKKT} results in a relatively loose formulation owing to a large number of big-M parameters. Therefore, we employ the strong duality-based \eqref{eqn:AdaDuality} to choose input parameters for the case studies in Section \ref{sec:CompSt}.

%\newpage
\section{Effect of Restrictions on the Design of Experiments} \label{sec:AdSampDiff}
Problem \eqref{eqn:ASP} is valid regardless of the restrictions on the input parameters $A$ and $b$, i.e. the set $\Pi$. However, it is still worth highlighting the difference between being able to vary the constraint matrix $A$ versus being restricted to only varying the right-hand-side (RHS) parameters $b$. While the first case is more general (we include two case studies in the next section), the second scenario is also not uncommon. In several transportation network flow problems, the values in $A$ are restricted to 0 and 1. These values determine the structure of the network and cannot be changed. The RHS parameters $b$, however, usually refer to quantities such as capacities of individual arcs or demands at the nodes and can often be controlled. Notice that the primary requirement for adaptive or even random sampling to reduce the size of $\widehat{\mathcal{C}}$ is to find cones in successive experiments with extreme rays pointing in directions different from the ones already found. With $A$ being allowed to vary, this can be easily achieved by appropriately varying the parameters of the active constraints under the given restrictions $\Pi$. However, when only $b$ can vary, the only hope to reduce the size of the admissible set is in finding a set of input parameters for which a different set of constraints become active. We further illustrate this point through the following example.
\begin{example} \label{examp:varyingb}
Let the \eqref{eqn:FOP} for the first experiment be $\max\limits_{x_1, x_2} \{x_1 + 2x_2 \, : \, 2x_1 + 3x_2 \leq 3, x_1 \leq 1, x_2 \leq 1, x_1 \geq 0, x_2 \geq 0\}$, for which the feasible region is depicted in the first subfigure of Figure \ref{fig:ActConst}. Suppose the constraints are indexed by $\mathcal{K} = \{1, 2, 3, 4, 5\}$ and the set $\Pi$ allows $2.5 \leq b_1 \leq 4 $, keeping all other parameters fixed. For simplicity, we assume that the noisy data obtained is such that the correct vertex can always be recovered by solving \eqref{eqn:P1} for each of the experiments individually. Therefore, $\widehat{\mathcal{C}}_1 = \mathrm{cone}(\{(-1, 0), (2, 3)\})$. Now, for the next experiment to result in a cone with $\eta > 0$, we have to choose $b_1 > 3$, which will yield a different set of active constraints. For any other choice of $b_1$, $\widehat{\mathcal{C}}_2$ will be the same as $\widehat{\mathcal{C}}_1$. In Figure \ref{fig:ActConst}, we show the case when $b_1$ for the second experiment is chosen to be 4. This $b_1$ results in a $\widehat{\mathcal{C}}_2 = \mathrm{cone}(\{(0, 1), (2, 3)\})$, thereby effectively reducing the size of the admissible set and increasing the confidence in the point estimate selected from this set. 
\end{example}

\begin{figure}[ht]
    \includegraphics[clip, trim=0 0 0 0, width=\textwidth]{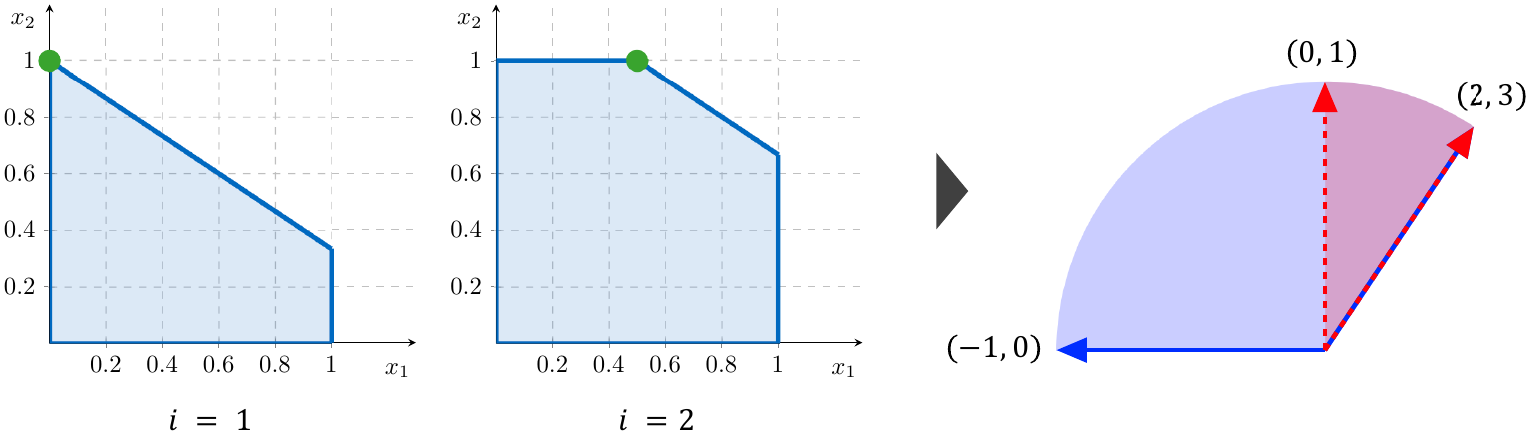}
    \centering
    \caption{Illustration of the problem described in Example \ref{examp:varyingb}. The vectors are normalized to the same length. Filled green circles depict the denoised estimates of the noisy observations (omitted).}
    \label{fig:ActConst}
\end{figure}

%\newpage
\section{Additional Results for the Case Study from Section \ref{sec:CustPref}} \label{sec:CustPrefAddRes}
In this section, we present some additional results for the customer preference learning case study considered in Section \ref{sec:CustPref}. Specifically, in Figure \ref{fig:AddlResults}, we show for each case the evolution of the prediction error for each of the ten random instances of \eqref{eqn:IOP}. Here, one can notice that in case of a failed run with the decomposition algorithm, the solution method is still able to process more than 50\% of the experiments. Unlike Algorithm \ref{alg:Two-phase}, a failed run with Algorithm \ref{alg:Online Algorithm} yields a partial estimate which can be used to generate predictions, albeit with a slightly higher prediction error compared to the mean values observed after the completion of 100 experiments. Further, Figure \ref{fig:AddlResults} shows that in the majority of the cases, the prediction error shows a continuous decrease as data from more experiments are added; the spikes observed in the mean curves are a result of just 1 or 2 instances showing a momentary increase in the prediction error. This shows that our IOP formulation is fairly robust in generating accurate predictions even in high-noise situations.

\begin{figure}[h!]
\centering
\begin{minipage}{0.33\linewidth}
\resizebox{\linewidth}{!}{\includegraphics{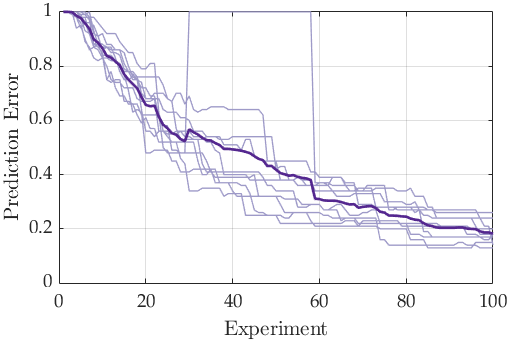}}%
\caption*{(a) $n = 25, \sigma = 0.01, J = 5$}%
\end{minipage}%
%\hfill
\begin{minipage}{0.33\linewidth}
\resizebox{\linewidth}{!}{\includegraphics{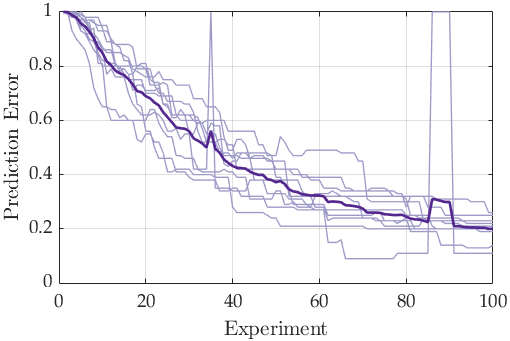}}%
\caption*{(b) $n = 25, \sigma = 0.1, J = 20$}%
\end{minipage}%
%\hfill
\begin{minipage}{0.33\linewidth}
\resizebox{\linewidth}{!}{\includegraphics{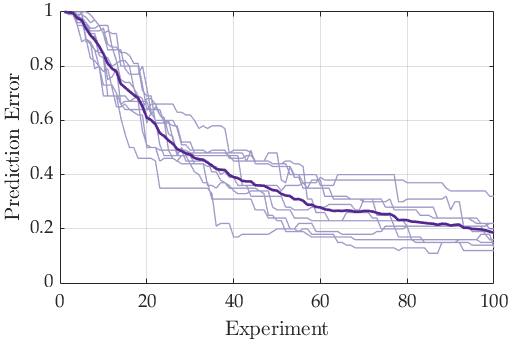}}%
\caption*{(c) $n = 25, \sigma = 0.1, J = 250$}%
\end{minipage}%
\vspace{2em}
\noindent
\begin{minipage}{0.33\linewidth}
\resizebox{\linewidth}{!}{\includegraphics{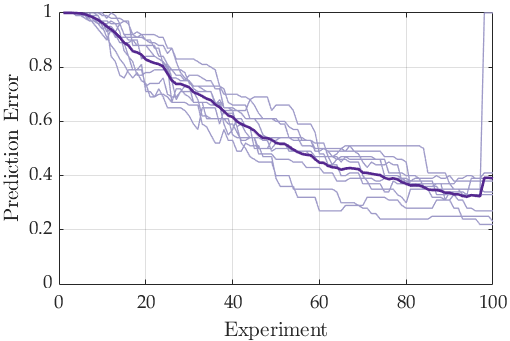}}%
\caption*{(d) $n = 50, \sigma = 0.01, J = 5$}%
\end{minipage}%
%\hfill
\begin{minipage}{0.33\linewidth}
\resizebox{\linewidth}{!}{\includegraphics{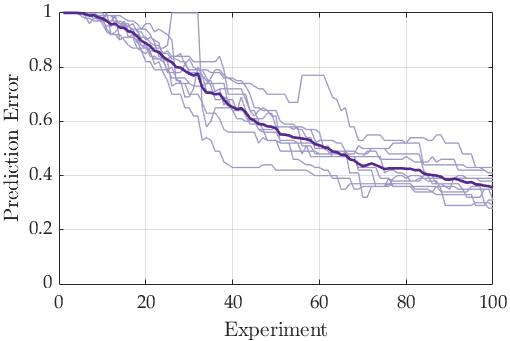}}%
\caption*{(e) $n = 50, \sigma = 0.1, J = 20$}%
\end{minipage}%
%\hfill
\begin{minipage}{0.33\linewidth}
\resizebox{\linewidth}{!}{\includegraphics{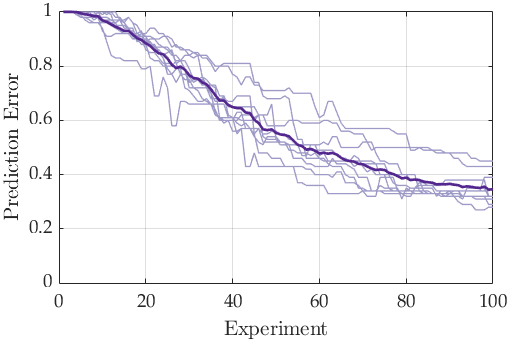}}%
\caption*{(f) $n = 50, \sigma = 0.1, J = 250$}%
\end{minipage}%
\vspace{2em}
\noindent
\begin{minipage}{0.33\linewidth}
\resizebox{\linewidth}{!}{\includegraphics{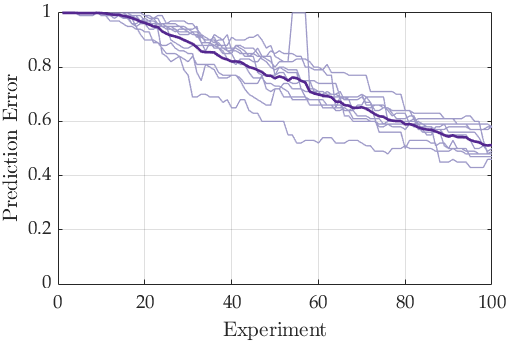}}%
\caption*{(g) $n = 100, \sigma = 0.01, J = 5$}%
\end{minipage}%
%\hfill
\begin{minipage}{0.33\linewidth}
\resizebox{\linewidth}{!}{\includegraphics{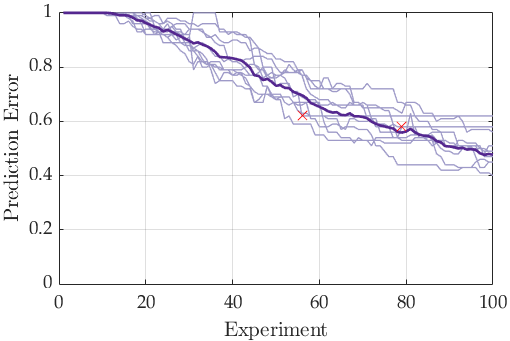}}%
\caption*{(h) $n = 100, \sigma = 0.1, J = 20$}%
\end{minipage}%
%\hfill
\begin{minipage}{0.33\linewidth}
\resizebox{\linewidth}{!}{\includegraphics{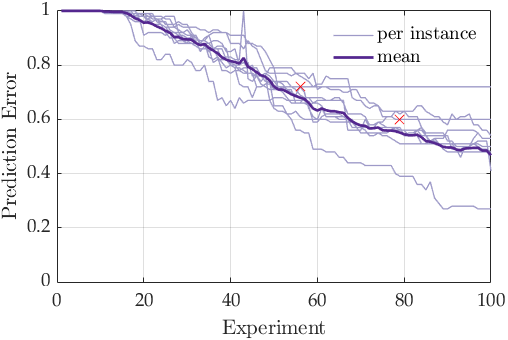}}%
\caption*{(i) $n = 100, \sigma = 0.1, J = 250$}%
\end{minipage}%
\caption{Change in prediction error as experiments are added. Thick lines show mean values across all solved instances, thinner lines show the change in prediction error for all the ten instances. Red cross markers highlight the points at which Algorithm \ref{alg:Online Algorithm_1} timed out for a specific instance.}
\label{fig:AddlResults}
\end{figure}
%\newpage

\section{Data for Production Planning Case Study from \\ Section \ref{sec:ProdPlan}} \label{sec:ProdPlanData}

We have $q_m^{\mathrm{min}} = 0, q_m^{\mathrm{max}} = 200, x_{pmh}^{\mathrm{max}} = 300$ for all $p \in \mathcal{P}, m \in \mathcal{M}, h \in \mathcal{H},$ and \\
\begin{equation*}
    w_{mh}^{\mathrm{max}} = \Bigg\{\begin{array}{lr}
        300, \; \text{if } m \in \{1, \ldots, 9 \} \\
        0, \; \mathrm{otherwise} \\
        \end{array} \forall \; h \in \mathcal{H}.
\end{equation*}
We assume that the nominal value of demand ${d}_{mh}$ remains the same across all time periods, i.e. nominal ${d}_{mh} = D_m \; \forall h \in \mathcal{H}$. Table \ref{tab:ProdPlanPara1} lists the values of $D_m, q_m^0$ used for all products. Materials not listed are raw materials, byproducts, or intermediate products for which $D_m$ and $q^0_m$ are set to zero. Table \ref{tab:ProdPlanPara2} shows the nominal $\mu_{pm}$ parameters.

\begin{table}[h!]
	\centering
	\begin{tabular}{@{}ccccccccccccccccc@{}}
		\toprule
		\multicolumn{17}{c}{Materials} \\
		& 10 & 11 & 12 & 13 & 14 & 15 & 16 & 17 & 18 & 19 & 20 & 21 & 22 & 23 & 24 & 25 \\
		\midrule
		$D_m$ & 40 & 150 & 30 & 75 & 60 & 30 & 30 & 50 & 150 & 70 & 30 & 75 & 100 & 75 & 250 & 50 \\
		$q_m^0$ & 10 & 5 & 0 & 5 & 5 & 7 & 10 & 2 & 3 & 5 & 10 & 7 & 5 & 5 & 3 & 3 \\
		\bottomrule 
	\end{tabular}
	\caption{\label{tab:ProdPlanPara1}Values of $D_m$ and $q_m^0$ parameters used for the case study presented in Section \ref{sec:ProdPlan}.}
\end{table}

\begin{landscape}
\begin{table}[h!]
\resizebox{1.3\textwidth}{!}{%
%\centering
\begin{tabular}{@{}cccccccccccccccccccccccccccccc@{}}
    \toprule
     &  & \multicolumn{28}{c}{Materials} \\
     &  & 1 & 2 & 3 & 4 & 5 & 6 & 7 & 8 & 9 & 10 & 11 & 12 & 13 & 14 & 15 & 16 & 17 & 18 & 19 & 20 & 21 & 22 & 23 & 24 & 25 & 26 & 27 & 28 \\ %\cline{3-30} 
     \midrule
    \parbox[t]{2mm}{\multirow{38}{*}{\rotatebox[origin=c]{90}{Processes}}} & 1 & -0.58 &  &  &  &  & -0.63 &  &  &  &  & 1 &  &  &  &  &  &  &  &  &  &  &  &  &  &  &  &  &  \\
     & 2 &  &  &  &  &  & -0.64 &  &  &  &  &  & 1 &  &  &  &  &  &  &  &  &  &  &  &  &  &  &  &  \\
     & 3 & -0.055 & -1.25 &  &  &  &  &  &  &  &  & 1 &  &  &  &  &  &  &  &  &  &  &  &  &  &  &  &  &  \\
     & 4 &  & -0.4 & -0.69 &  &  &  &  &  &  &  &  &  &  & 1 &  &  &  &  &  &  &  &  &  &  &  &  &  &  \\
     & 5 &  &  &  &  &  &  &  &  &  &  &  &  & 1 & -2.3 &  &  & 1.7 &  &  &  &  &  &  &  &  &  &  &  \\
     & 6 &  & -0.74 &  &  &  &  &  &  &  &  &  &  &  &  & 1 &  &  &  &  &  &  &  &  &  &  &  &  &  \\
     & 7 &  &  &  &  &  &  &  &  &  &  &  &  & 1 &  & -1.1 &  &  &  &  &  &  &  &  &  &  &  &  &  \\
     & 8 &  &  & -1 &  &  &  &  &  &  &  &  &  &  &  &  & 1 &  &  &  &  &  &  &  &  &  &  &  &  \\
     & 9 &  &  &  &  &  &  &  &  &  &  &  &  &  &  &  & -1.26 & 1 &  &  &  &  &  &  &  &  &  &  &  \\
     & 10 &  &  &  &  &  &  &  &  &  &  &  &  & -1.57 &  &  &  &  &  &  &  &  &  &  &  &  &  & 1 &  \\
     & 11 &  &  & -1.01 &  &  &  &  &  &  &  &  &  &  &  &  &  & 1 &  &  &  &  &  &  &  &  &  &  &  \\
     & 12 &  &  & -0.76 & -0.28 &  &  &  & 1 &  &  &  &  &  &  &  &  &  &  &  &  &  &  &  &  &  &  &  &  \\
     & 13 &  &  &  &  &  &  &  & -1.14 &  &  &  &  &  &  &  &  &  & 1 &  &  &  &  &  &  &  &  &  &  \\
     & 14 &  & -0.78 &  &  &  &  &  &  &  &  &  &  & 1 &  &  &  &  &  &  &  &  &  &  &  &  &  &  &  \\
     & 15 &  &  &  &  &  &  &  &  &  &  &  & 1 &  &  &  &  &  &  & -1.34 &  &  &  &  &  &  &  &  &  \\
     & 16 &  &  &  & -0.6 &  &  &  &  &  &  &  &  &  &  &  &  &  &  & 1 &  &  &  &  &  &  &  &  &  \\
     & 17 &  &  &  & -0.67 &  &  &  &  &  &  &  & 1 &  &  &  &  &  &  &  &  &  &  &  &  &  &  &  &  \\
     & 18 &  &  &  &  &  &  &  &  &  &  &  & -1.1 &  &  &  &  &  &  &  & 1 &  &  &  &  &  &  &  &  \\
     & 19 &  &  &  &  &  &  &  &  &  &  &  &  &  &  &  &  &  &  & -0.98 & 1 &  &  &  &  &  &  &  &  \\
     & 20 &  &  &  & -0.35 &  &  &  &  &  &  &  &  &  &  &  &  &  &  &  & -0.71 & 1 &  &  &  &  &  &  &  \\
     & 21 &  &  &  &  &  & -0.32 &  &  &  &  &  &  &  &  &  &  &  &  &  & -0.72 & 1 &  &  &  &  &  &  &  \\
     & 22 &  &  &  & -0.88 & 1 &  &  &  &  &  &  &  &  &  &  &  &  &  &  &  &  &  &  & 0.03 &  &  &  &  \\
     & 23 & -0.56 &  &  &  & -0.92 &  &  &  &  &  & 1 &  &  &  &  &  &  &  &  &  &  &  &  &  &  &  &  &  \\
     & 24 &  &  &  & -0.39 &  &  &  &  &  &  &  &  &  &  &  &  &  &  &  &  &  &  &  &  &  &  &  & 1 \\
     & 25 &  &  &  &  & 1 &  &  &  &  &  &  &  &  &  &  &  &  &  &  &  &  &  &  &  &  &  &  & 1 \\
     & 26 &  &  &  & -0.3 &  &  &  &  &  &  &  &  &  &  &  &  &  &  &  &  &  &  &  &  &  & 1 &  &  \\
     & 27 &  &  &  &  &  &  &  &  &  &  &  &  &  &  &  &  &  &  &  & -0.65 &  & 1 &  &  &  &  & -0.46 &  \\
     & 28 &  &  &  &  &  &  & -0.56 &  &  & -0.56 &  &  &  &  &  &  &  &  &  & 1 &  &  &  &  &  &  &  &  \\
     & 29 &  &  &  &  &  &  &  &  &  &  &  & -1.2 &  &  &  &  &  &  &  &  &  & 1 &  &  &  &  &  &  \\
     & 30 &  &  &  & -1.17 &  &  &  &  &  &  &  &  &  &  &  &  &  &  &  &  &  &  &  &  & 1 &  &  &  \\
     & 31 &  &  &  &  & -0.75 &  &  &  &  &  &  &  &  &  &  &  &  &  &  &  &  &  &  & 1 &  &  &  &  \\
     & 32 &  &  &  & -0.53 &  &  &  &  &  &  &  &  &  &  &  &  &  &  &  &  &  &  &  & 1 &  &  &  &  \\
     & 33 &  &  &  &  &  &  &  &  &  &  &  & -0.6 &  &  &  &  &  &  &  & -0.82 & 1 &  &  &  &  &  &  &  \\
     & 34 &  &  &  &  &  &  &  &  &  & -0.42 &  &  &  &  &  &  &  &  &  &  &  &  &  &  & 1 &  &  &  \\
     & 35 &  &  &  &  &  &  &  &  & -0.5 & 1 &  &  &  &  &  &  &  &  &  &  &  &  &  &  &  &  &  &  \\
     & 36 &  &  &  &  &  &  & -0.53 &  &  &  &  &  &  &  &  &  &  &  &  &  &  &  &  & 1 & -0.57 &  &  &  \\
     & 37 &  &  &  &  &  &  &  &  &  &  &  &  &  &  &  &  &  &  &  &  &  &  &  & 1 &  &  &  & -1.44 \\
     & 38 &  & 0.38 & 0.22 & 1 &  &  &  &  & -3.08 &  &  &  &  &  &  &  &  &  &  &  &  &  &  &  &  & 1.81 &  &  \\ \bottomrule
\end{tabular}}
\caption{\label{tab:ProdPlanPara2}Nominal values of the conversion factors $\mu_{pm}.$}
\end{table}
\end{landscape}

\end{document}